\newtheorem{theorem}{Theorem}[section]
\newtheorem{proposition}[theorem]{Proposition}
\newtheorem{lemma}[theorem]{Lemma}
\renewcommand{\a}{\alpha}
\renewcommand{\b}{\beta}
\renewcommand{\d}{\delta}
\newcommand{\f}{\frac}
\newcommand{\g}{\gamma}
\newcommand{\G}{\Gamma}
\renewcommand{\l}{\lambda}
\renewcommand{\L}{\Lambda}
\newcommand{\bea}{\begin{eqnarray}}
\newcommand{\eea}{\end{eqnarray}}
\newcommand{\bna}{\begin{eqnarray*}}
\newcommand{\ena}{\end{eqnarray*}}
\newcommand{\s}{\sigma}
\renewcommand{\th}{\theta}
\newcommand{\fg}{\mathfrak g}
\newcommand{\ve}{\varepsilon}
\begin{document}

\title[The M\"obius function and distal flows] 
{The M\"obius function and distal flows} 
\author{Jianya Liu \& Peter Sarnak}
\address{School of Mathematics
\\
Shandong University
\\
Jinan
\\
Shandong 250100
\\
China}
\email{jyliu@sdu.edu.cn}

\address{Department of Mathematics
\\
Princeton University \& Institute for Advanced Study
\\
Princeton, NJ 08544-1000       
\\
USA}
\email{sarnak@math.princeton.edu}

\date{\today}

\begin{abstract}
We prove that the M\"{o}bius function is linearly disjoint from an 
analytic skew product on the $2$-torus. These flows are distal 
and can be irregular in the sense that their ergodic averages need not 
exist for all points. The previous cases for which such disjointness 
has been proved are all regular. 
We also establish the linear disjointness of 
M\"{o}bius from various distal homogeneous flows. 
\end{abstract}

\subjclass[2000]{11L03, 37A45, 11N37} 
\keywords{The M\"obius function, distal flow,  affine linear map, 
skew product, nilmanifold}

\maketitle

\addtocounter{footnote}{1}

\section{Introduction} 
\setcounter{equation}{0}
Let $\mathscr{X}=(T, X)$ be a flow, namely $X$ is a compact topological space 
and $T: X\to X$ a continuous map. The sequence $\xi(n)$ 
is observed in $\mathscr{X}$ if there is an $f\in C(X)$ and an $x\in X$,  
such that $\xi(n)=f(T^n x)$. Let $\mu(n)$ be the M\"obius function, that 
is $\mu(n)$  is $0$ if $n$ is not square-free, and is $(-1)^t$ if  $n$ is a 
product of $t$ distinct primes. We say that $\mu$ is linearly 
disjoint from $\mathscr{X}$ if 
\begin{eqnarray}\label{def/DISJO}
\f{1}{N}\sum_{n\leq N} \mu(n)\xi(n) \to 0,  \quad \mbox{as } N\to\infty, 
\end{eqnarray}
for every observable $\xi$ of $\mathscr{X}$. The M\"obius Disjointness Conjecture of 
the second author asserts that $\mu$ is linearly disjoint from every $\mathscr{X}$ 
whose entropy is $0$ \cite{Sar}, \cite{Sar1}. 
The disjointness results for $\mu(n)$ in this paper can be 
proved in the same way for similar multiplicative functions such as 
$\l(n)=(-1)^{\tau(n)}$ where $\tau(n)$ is the number of 
prime factors of $n$.  
This Conjecture has been established for many flows 
$\mathscr{X}$ (see \cite{Dav}, \cite{MauRiv}, \cite{GreTao2}, \cite{BouSarZie}, \cite{Bou})   
however all of these flows are quasi-regular in the sense that 
the Birkhoff averages 
\begin{eqnarray}\label{def/Birkh}
\f{1}{N}\sum_{n\leq N} \xi(n) 
\end{eqnarray}
exist for every  $\xi$ observed in $\mathscr{X}$. 
In fact in all of these cases there is an expected Prime Number Type Theorem of the 
form 
\begin{eqnarray*}
\f{1}{\pi(N)}\sum_{p\leq N} \xi(p) \quad\mbox{converging, } 
\end{eqnarray*}
where $\pi(N)$ is the number of primes less than $N$. Such a 
Prime Number Theorem can certainly fail for irregular flows, that is ones 
for which (\ref{def/Birkh}) fails.     
In this paper we establish some new cases of the Disjointness Conjecture and  
in particular ones which may be irregular.  While the 
flows that we consider can be complicated in terms of the behavior of their individual 
orbits, they are distal and of zero entropy, so that the 
disjointness is still expected to hold. 

Our first result is concerned with certain regular flows, namely affine linear 
maps  of a compact abelian group $X$.  Such a flow $(T, X)$ 
is given by 
\begin{eqnarray}\label{def/AFF}
T(x)=Ax+b
\end{eqnarray}
where $A$ is an automorphism of $X$ and $b\in X$ (see \cite{Hah},  \cite{HoaPar}).  

\begin{theorem}\label{thm1} 
Let $\mathscr{X}=(T, X)$ be an affine linear flow on a compact 
abelian group which is of zero entropy. Then $\mu$ 
is linearly disjoint from $\mathscr{X}$.  
\end{theorem}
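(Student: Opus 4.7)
The plan is to use Fourier analysis on the compact abelian group $X$ to reduce Theorem \ref{thm1} to a bound on Möbius sums against polynomial phases, then to invoke classical estimates for such sums. The main work lies in extracting polynomial structure from the zero-entropy hypothesis on the automorphism $A$.

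\textbf{Step 1 (Fourier/character reduction).} By Peter--Weyl (or Stone--Weierstrass), the characters $\chi\in\widehat{X}$ span a uniformly dense subalgebra of $C(X)$. Since $|\mu(n)|\leq 1$, it suffices to prove \eqref{def/DISJO} when $\xi(n)=\chi(T^n x)$ for a single character $\chi$. Writing $T^n x = A^n x + c_n$ with $c_n=(I+A+\cdots+A^{n-1})b$, we have
$$\chi(T^n x) \;=\; \bigl((A^*)^n\chi\bigr)(x)\cdot\chi(c_n).$$

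\textbf{Step 2 (Quasi-unipotent structure from zero entropy).} The $\mathbb Z$-module generated by the orbit $\{(A^*)^n\chi\}_{n\in\mathbb Z}$ inside $\widehat X$ is $A^*$-invariant and defines a finite-dimensional torus quotient of $X$ through which the whole computation factors. On this quotient the hypothesis of zero entropy, combined with the Yuzvinski\u\i\ formula, forces all eigenvalues of $A^*$ to lie on the unit circle; since $A^*$ acts by integers, Kronecker's theorem then makes these eigenvalues roots of unity, so some power $A^k$ is unipotent on the subgroup. Consequently $A^{kn}$, and hence the cocycle $c_n$, have entries that are \emph{polynomial} in $n$ of bounded degree $d=d(\chi)$. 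We conclude that
$$\chi(T^n x) \;=\; e\bigl(P_{x,\chi}(n)\bigr)$$
for a real polynomial $P_{x,\chi}$ of degree at most $d$ (for $n$ in each residue class modulo $k$).

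\textbf{Step 3 (Möbius against polynomial phases).} The problem is reduced to showing
$$\frac{1}{N}\sum_{n\le N}\mu(n)\,e\bigl(P(n)\bigr) \;\longrightarrow\; 0$$
for every real polynomial $P$. For $\deg P \le 1$ this is Davenport's classical estimate. For higher degrees, one may invoke the Hua--Vinogradov-type bounds, or equivalently the special case of the Green--Tao theorem on the Möbius function and polynomial (hence nilsequence) phases. Splitting into residue classes modulo $k$ to remove the quasi-unipotency exponent, the contribution of each residue class is handled by the same polynomial-phase estimate.

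\textbf{Main obstacle.} The critical step is Step 2: extracting bounded-degree polynomial structure of $n\mapsto T^n x$ from the abstract zero-entropy hypothesis on a possibly infinite-dimensional compact abelian group. Once this structure is in place, Steps 1 and 3 are essentially routine, as the polynomial-phase Möbius bound is uniform enough to survive the Fourier approximation and the passage from characters to continuous observables.
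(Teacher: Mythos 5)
Your strategy coincides with the paper's --- reduce to characters, exploit zero entropy to obtain quasi-unipotent structure and hence polynomial phases, then apply Davenport/Hua --- but there is a genuine gap at exactly the point you yourself flag as the ``main obstacle.'' In Step 2 you assert, without argument, that the $\mathbb{Z}$-module $\Lambda$ generated by the orbit $\{(A^*)^n\chi\}_{n}$ ``defines a finite-dimensional torus quotient of $X$.'' That assertion is itself the nontrivial statement: $\widehat{X}$ is an arbitrary discrete abelian group, and the $A^*$-orbit module of a single character need not be finitely generated in general (for the shift on $\prod_{\mathbb Z}\mathbb Z/2\mathbb Z$ the orbit module of a coordinate character is all of $\bigoplus_{\mathbb Z}\mathbb Z/2\mathbb Z$). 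Finiteness is forced by zero entropy, via a theorem of Aoki applied to the factor flow on $X/\Lambda^{\perp}$: zero entropy of an affine map of a compact abelian group implies the dual group is finitely generated. You invoke zero entropy only later, for the eigenvalues on the finite-dimensional quotient, but it must be invoked earlier, just to have a finite-dimensional quotient at all. Without Aoki's result the Fourier reduction does not close.

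Two smaller remarks. The quotient is $\mathbb T^r\times C$ with $C$ a finite group, not a connected torus, so one must also check that the induced automorphism preserves the identity component (giving a block-triangular form) before applying Yuzvinskii and Kronecker on that block. Also, the paper sidesteps your explicit computation of the cocycle $c_n=(I+A+\cdots+A^{n-1})b$ by a linearization device: pass to $Y=X\times X$ and the genuine automorphism $W(x_1,x_2)=(Ax_1+x_2,x_2)$, for which $W^n(x,b)=(T^n x,b)$, so the affine flow is absorbed into a purely linear one and only a single quasi-unipotent power need be taken. Your direct route --- splitting $n$ into residue classes modulo the quasi-unipotency exponent and summing the resulting geometric progression --- is correct but a bit more computational.
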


The flows in Theorem~\ref{thm1} are distal quasi-regular,  
and our main 
result is concerned with nonlinear distal flows on such spaces. 
We restrict to $X={\Bbb T}^2$ the two dimensional torus 
${\Bbb R}^2/{\Bbb Z}^2$ and consider nonlinear smooth (or even analytic) 
skew products as discussed in Furstenberg \cite{Fur61}. $T: {\Bbb T}^2\to {\Bbb T}^2$ 
is given by 
\begin{eqnarray}\label{def/SKEW}
T(x, y)= (ax+\a, cx+dy+ h(x))
\end{eqnarray}
where $a, c, d\in \Bbb Z, ad=\pm 1,  \a\in \Bbb R$ 
and $h$ is a smooth periodic function of period $1$. 
The affine 
linear part is in the form 
$$
\left[
\begin{array}{ccc}
a & 0\\
c & d
\end{array}
\right] \in GL_2 (\Bbb Z), 
$$
ensuring that $T$ has zero entropy (and it can always be brought into 
this form). The flow $(T, {\Bbb T}^2)$ is distal and this skew product 
is a basic building block (with $e(h(x))$ continuous) in Furstenberg's 
classification theory of minimal distal flows \cite{Fur63}. Thus 
our main result takes the first step towards handling distal flows, 
by dealing with the new dynamical complexities that are present in 
such skew products. 
If $\a$ is diophantine, that is 
$$
\bigg|\a-\f{a}{q}\bigg|\geq \f{c}{q^m} 
$$
for some $c>0, m<\infty$ and all $a/q$ rational,  then $T$ can be conjugated 
by a smooth map of ${\Bbb T}^2$ to its affine linear part 
\begin{eqnarray}\label{DIO/conjugate}
(x, y)\mapsto (ax+\a, cx+dy+ \b)
\end{eqnarray}
where 
$$
\b=\int_0^1 h(x)dx
$$ 
(see \cite{SanUrz}). 
Hence the disjointness of $\mu$ from 
$\mathscr{X}=(T, {\Bbb T}^2)$ for a $T$ with a diophantine 
$\a$, follows from Theorem~\ref{thm1}.  
However if $\a$ is not diophantine the dynamics of the 
flow $(T, {\Bbb T}^2)$ can be very different from an affine 
linear flow. For example, as Furstenberg shows it may be 
irregular (i.e. the limits in (\ref{def/Birkh}) fail to exist  for certain observables, 
see (\ref{qk+1>eqk}) and (\ref{Furs/h=})).  
Nevertheless our main result shows that  these nonlinear skew products 
are linearly disjoint from $\mu$, at least if $h$ satisfies some further small  
technical hypothesis. Firstly we assume that $h$ is analytic, namely that 
if 
\begin{eqnarray}\label{h=/FOUR}
h(x)=\sum_{m\in \Bbb Z} \hat{h}(m) e(mx)
\end{eqnarray}
then 
\begin{eqnarray}\label{hhat/UPP}
\hat{h}(m)\ll e^{-\tau |m|}
\end{eqnarray}
for some $\tau>0$. Secondly we assume that there is 
$\tau_2<\infty$ such that 
\begin{eqnarray}\label{hhat/LOW}
|\hat{h}(m)|\gg e^{-\tau_2 |m|}.  
\end{eqnarray}
This is not a very natural condition being an artifact of our proof. 
However it is not too restrictive and the following applies rather 
generally (and most importantly there is no condition on $\a$). 

\begin{theorem}\label{thm2} 
Let $\mathscr{X}=(T, {\Bbb T}^2)$ be of the form (\ref{def/SKEW}), 
with $h$ satisfying (\ref{hhat/UPP}) and (\ref{hhat/LOW}). 
Then $\mu$ is linearly disjoint from $\mathscr{X}$.  
\end{theorem}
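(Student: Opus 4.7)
The plan is to reduce to exponential observables, then apply the Bourgain--Sarnak--Ziegler (BSZ) criterion to reduce to prime correlations, and finally to analyze the resulting exponential sum via the Fourier expansion of the skew cocycle together with a careful treatment of small denominators. By Stone--Weierstrass it suffices to establish (\ref{def/DISJO}) for the dense family $f(x,y)=e(kx+\ell y)$, $(k,\ell)\in\Z^2$. If $\ell=0$ then $\xi(n)=f(T^n(x_0,y_0))$ depends only on the first coordinate $x_n$ of $T^n(x_0,y_0)$; since $a=\pm 1$, $x_n$ is a linear progression in $n\a$, so (\ref{def/DISJO}) reduces to Davenport's classical estimate $\f{1}{N}\sum_{n\leq N}\mu(n)e(n\b)=o(1)$. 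Henceforth fix $\ell\neq 0$.

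For $\ell\neq 0$ I apply the BSZ criterion \cite{BouSarZie}, reducing to showing that
\begin{eqnarray*}
\f{1}{N}\sum_{n\leq N}\xi(pn)\,\overline{\xi(qn)}=o_{p,q}(1),\qquad N\to\infty,
\end{eqnarray*}
for every pair of distinct primes $p,q$. Writing $T^n(x,y)=(x_n,y_n)$, iteration of (\ref{def/SKEW}) gives $y_n=P(n;x_0,y_0)+\phi_n(x_0)$, where $P$ is a polynomial in $n$ (linear when $c=0$, quadratic otherwise) and $\phi_n(x)=\sum_{j=0}^{n-1}h(x_j)$ is the skew cocycle. Treating the main case $a=1$ (the cases $a=-1$ reduce similarly after grouping terms in pairs), we have $x_j=x+j\a$; expanding $h$ via (\ref{h=/FOUR}) and summing the geometric progressions yields
\begin{eqnarray*}
\phi_n(x)=n\hat{h}(0)+\sum_{m\neq 0}\hat{h}(m)e(mx)\,\f{e(mn\a)-1}{e(m\a)-1}.
\end{eqnarray*}
Consequently $\xi(pn)\overline{\xi(qn)}=e(R(n)+\ell[\phi_{pn}(x_0)-\phi_{qn}(x_0)])$ with $R(n)$ polynomial in $n$, and the correlation takes the explicit form
\begin{eqnarray*}
\f{1}{N}\sum_{n\leq N}e(R(n))\,e\Biggl(\ell\sum_{m\neq 0}\hat{h}(m)e(mx_0)\,\f{e(mpn\a)-e(mqn\a)}{e(m\a)-1}\Biggr).
\end{eqnarray*}

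The crux is to bound this exponential sum by $o(N)$. I would truncate the inner $m$-sum at a threshold $M=M(N)$ growing slowly (say logarithmically) with $N$, so that by (\ref{hhat/UPP}) the tail $|m|>M$ perturbs the phase by $o(1)$ uniformly in $n$: generic frequencies $m$ with $\|m\a\|\gg|m|^{-A}$ have $\hat{h}(m)/(e(m\a)-1)$ exponentially small in $|m|$, while the sparse resonant $m$ with abnormally small $\|m\a\|$ are forced by Dirichlet's theorem to have $|m|$ comparable to a continued-fraction denominator of $\a$, hence are again tamed by (\ref{hhat/UPP}). With the tail removed, the phase is a trigonometric polynomial in $n$ whose spectrum lies in integer combinations of $\{mp\a,mq\a:|m|\leq M\}$ together with the frequencies of $R(n)$; a Weyl/van der Corput estimate should then yield the required cancellation. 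The lower bound (\ref{hhat/LOW}) enters here to guarantee that no individual Fourier mode is accidentally suppressed, so that the truncated phase is genuinely non-degenerate and not a disguised coboundary, while the distinctness $p\neq q$ rules out cancellation between the $p$- and $q$-pieces. The main obstacle, and the reason the analyticity assumption rather than mere smoothness of $h$ is crucial, is the possible Liouville behaviour of $\a$: the fine balance between the exponential decay of $\hat{h}$ and the small denominators $|e(m\a)-1|$ must be carried out uniformly in $p,q$, and I expect this balancing act to be the technical core of the argument.
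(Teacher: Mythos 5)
Your high-level map of the tools (reduce to characters, expand the cocycle in Fourier, truncate, invoke BSZ) matches the spirit of the paper, but there is a genuine error at the heart of your sketch. You assert that the resonant frequencies $m$ with abnormally small $\|m\a\|$ ``are again tamed by (\ref{hhat/UPP})''. This is false, and it is precisely the difficulty the paper is designed to overcome. If $q_k$ is a continued-fraction denominator of $\a$ with $q_{k+1}\gg e^{\tau q_k}$, then the resonant coefficient
$$
\f{|\hat{h}(q_k)|}{|e(q_k\a)-1|}\ \asymp\ |\hat{h}(q_k)|\cdot q_{k+1}\ \gg\ e^{-\tau_2 q_k}\cdot e^{\tau q_k}
$$
is not small at all (and in Furstenberg's example, \S7, one takes $q_{k+1}\asymp e^{\tau q_k}$ to make the sum genuinely divergent). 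The exponential decay of $\hat{h}$ exactly cancels or loses to the denominator, so the truncated phase is \emph{not} a bounded-coefficient trigonometric polynomial, and a generic Weyl/van der Corput bound cannot be invoked as a black box. Recognizing that these resonant blocks survive the truncation, and then treating them as a genuine nonlinear term via a van der Corput estimate for the third derivative combined with Lemma~\ref{lem: UniAwa} (which is where (\ref{hhat/LOW}) actually enters, to lower-bound the $L^2$-norm of a truncated resonance polynomial), is the technical core you have waved away.

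A second, structural divergence: you route everything through BSZ, which produces a qualitative $o(N)$ only. The paper invokes BSZ only in case (C), where the dominant resonant block is large relative to $\log N$; in the rational case and in cases (A) and (B) the resonant contributions are either absent, bounded by a power of $\log N$, or approximately polynomial in $n$, and there Fourier removal plus the Davenport--Hua bound (Lemma~\ref{lem:VDH}) yields a power-of-log rate. The case split (A)/(B)/(C), keyed to the size of $m_J^+\Phi_J$ relative to $\log N$, is what makes the argument close; without it, you have no uniform way to bound the correlation sum $\sum_n\xi(pn)\overline{\xi(qn)}$ once the resonant coefficients are allowed to grow with $N$. In short: your outline is pointed in the right direction, but the claim that analyticity alone tames the resonances is wrong, and the missing case analysis on the continued-fraction denominators is not a technicality but the main content of the proof.
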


Theorem~\ref{thm1} deals with the affine 
linear distal flows on the $n$-torus. A different source 
of  homogeneous quasi-regular distal flows are the affine linear flows on nilmanifold 
$X=G/\G$ where $G$ is a nilpotent Lie group and 
$\G$ a lattice in $G$.  For 
$\mathscr{X}=(T, G/\G)$ where $T(x)=\a x\G$  
with $\a\in G$, i.e. translation on $G/\G$, the 
linear disjointness of $\mu$ and $\mathscr{X}$ is proven in 
\cite{GreTao1} and \cite{GreTao2}. Using the classification of zero entropy (equivalently distal) 
affine linear flows on nilmanifolds \cite{Dan}, and 
Green and Tao's results  we prove 

\begin{theorem}\label{thm3} 
Let $\mathscr{X}=(T, G/\G)$ where $T$ is an affine 
linear map of the nilmanifold  $G/\G$ of zero entropy. 
Then $\mu$ is linearly disjoint from $\mathscr{X}$.  
\end{theorem}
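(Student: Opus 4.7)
The plan is to realise $\mathscr{X}=(T, G/\G)$ as a subflow of a translation flow on a suitably enlarged nilmanifold, thereby reducing the statement to the Möbius orthogonality of nilsequences established by Green and Tao in \cite{GreTao1, GreTao2}.

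A general affine linear map of the nilmanifold $G/\G$ has the form $T(x\G)=(\a\cdot A(x))\G$, where $\a\in G$ and $A\in\text{Aut}(G)$ preserves $\G$. By Dani's classification \cite{Dan}, the hypothesis that $T$ has zero entropy (equivalently, that $\mathscr{X}$ is distal) is equivalent to $A$ acting quasi-unipotently on the Lie algebra of $G$; since $A$ preserves the lattice $\G$, the eigenvalues are algebraic integers on the unit circle, hence roots of unity. Replacing $T$ by a suitable power $T^k$, we may assume that $A$ acts unipotently. The reduction from $T^k$ back to $T$ is carried out by splitting the sum (\ref{def/DISJO}) along arithmetic progressions $n\equiv r\pmod k$ and handling each piece via the Green--Tao estimate along progressions, or equivalently by expanding $\mathbf{1}_{n\equiv r\pmod k}$ in additive characters $e(jn/k)$ and absorbing each such character into an auxiliary circle factor attached to the nilmanifold we build below.

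With $A$ unipotent, $A=\exp N$ for a nilpotent derivation $N$ of the Lie algebra of $G$, and $A^t:=\exp(tN)$ is a $1$-parameter group of automorphisms of $G$. Form the semidirect product Lie group $\widetilde G=G\rtimes_A\Bbb R$, with multiplication $(g, s)(h, t)=(g\cdot A^s(h),\, s+t)$. Since $A^t$ acts unipotently on the Lie algebra of $G$, the group $\widetilde G$ is itself nilpotent; and $\widetilde \G:=\{(\g, n):\g\in\G,\, n\in\Bbb Z\}$ is a cocompact lattice in $\widetilde G$, closure under multiplication being guaranteed by $A(\G)=\G$. Thus $\widetilde X:=\widetilde G/\widetilde \G$ is a nilmanifold, fibred over $\Bbb R/\Bbb Z$ with fibre $G/\G$, and the map $\iota:G/\G\to\widetilde X$, $\iota(x\G)=(x, 0)\widetilde \G$, is a continuous embedding onto the zero fibre. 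Setting $g_0:=(\a, 1)\in\widetilde G$, a direct computation gives
\[
g_0\cdot\iota(x\G)=(\a\cdot A(x),\, 1)\widetilde \G=(\a A(x),\, 0)\widetilde \G=\iota(T(x\G)),
\]
so left-translation by $g_0$ preserves $\iota(G/\G)$ and acts there as $T$.

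Given an observable $\xi(n)=f(T^n x)$ of $\mathscr{X}$, Tietze's extension theorem supplies $F\in C(\widetilde X)$ with $F\circ\iota=f$; setting $y=\iota(x\G)\in\widetilde X$ gives $\xi(n)=F(g_0^n y)$, a nilsequence on $\widetilde X$. The Green--Tao theorem then yields $\f{1}{N}\sum_{n\leq N}\mu(n)\xi(n)\to 0$. The principal substantive point is the construction of the nilpotent extension $\widetilde G$ together with the cocompact lattice $\widetilde \G$; this hinges on the unipotence of $A$ supplied by Dani's classification, after which the disjointness follows formally from the Green--Tao nilsequence theorem.
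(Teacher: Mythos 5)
Your argument is correct in essence but takes a genuinely different route from the paper. The paper works directly with the orbit $T^n(x\G)=g\s(g)\cdots\s^{n-1}(g)\s^n(x)\G$ and, using Mal'cev coordinates of the first and second kind together with the quasi-unipotence $U^\nu=I+N$, rewrites it in the explicit polynomial form $T^n(x\G)=b_1^{h_1(n)}\cdots b_k^{h_k(n)}\G$ with a \emph{bounded} number of factors $k$ independent of $n$; it then invokes a progression version of Green--Tao for polynomial orbits (Lemma~\ref{lem:4-1}). Your approach instead suspends: you pass to a power $T^k$ so that the automorphism becomes honestly unipotent, build the mapping-torus nilmanifold $\widetilde G/\widetilde\G$ with $\widetilde G=G\rtimes_{A}\Bbb R$ and $\widetilde\G=\G\rtimes\Bbb Z$, realize $T^k$ as an actual left-translation by $g_0=(\a,1)$, and apply the translation form of the Green--Tao nilsequence theorem. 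This is more structural and avoids the explicit coordinate bookkeeping; the paper's route is more computational but needs only the polynomial-orbit theorem as a black box, whereas yours requires you to verify that $\widetilde G$ is nilpotent (true, but you should supply a short Engel-type argument: since $dA=\exp N$ is unipotent, for every $X\in\fg$ and $s\in\Bbb R$ the operator $\text{ad}(X)+sN$ acts strictly upper-triangularly in a flag refining the lower central series of $\fg$). Two further points should be tightened. First, your phrase ``absorbing each such character into an auxiliary circle factor'' does not by itself carry out the reduction from $T^k$ back to $T$: the obstruction is not the indicator of the progression but the reindexing $n\mapsto m=(n-r)/k$, and you need in addition to take the $k$-th root $\widetilde g=g_0^{1/k}$ inside the simply connected group $\widetilde G$ (or, equivalently, use the disconnected nilmanifold $\widetilde X\times\Bbb Z/k\Bbb Z$ as in Leibman~\cite{Leb}), which is exactly the device the paper credits in the footnote to Lemma~\ref{lem:4-1}. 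Second, Green--Tao is stated for Lipschitz observables, so replace Tietze by a Lipschitz extension (or note that Lipschitz functions are dense in $C(\widetilde X)$ and pass to the limit). With these details supplied, the argument is a clean alternative to the paper's.
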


We end the introduction with brief outline of the paper and proofs. Theorem~\ref{thm1} 
with a rate of convergence is proved in \S2. We first reduce to the torus case and then handle 
the torus case by Fourier analysis and classical results of Davenport and 
Hua on exponential sums concerning the 
M\"{o}bius function, which is stated as Lemma~\ref{lem:VDH} in the present paper. 
The proof of Theorem~\ref{thm2} occupies \S\S3-6. The assertion of 
Theorem~\ref{thm2} holds for all $\a$, and so we have to consider all 
diophantine possibilities of $\a$. The case when $\a$ is rational is easy and 
this is done in \S3. When $\a$ is irrational we have to distinguish three cases 
(A), (B), and (C), and the first two cases with rates of convergence 
are handled in \S4 and \S5 respectively via different analytic techniques.  
The most complicated case (C) is studied in \S6, 
and the tool for this is the Bourgain-Sarnak-Ziegler 
finite version of the Vinogradov method (see Lemma~\ref{lem:BSZ})
\footnote
{Earlier implicit versions of this can be found in the literature, for example in \cite{Kat}.},  
incorporated with various analytic methods 
such as Poisson's summation and stationary phase. Thus in case (C) we offer 
no rate. A sub-case of (C) requires an analogue for trigonometric polynomials 
in place of polynomials of the uniform cancellation in Hua's Lemma~\ref{lem:VDH}, 
see Proposition~\ref{prop:TriHua}.  Furstenberg \cite{Fur61} gives examples of skew product transformations 
of the form (\ref{def/SKEW}) which are not regular in the sense of (\ref{def/Birkh}). 
Many of the flows ${\mathscr X}$ 
in Theorem~\ref{thm2} have this property and we show in \S7 that Furstenberg's 
examples are smoothly conjugate to such ${\mathscr X}$'s. In particular his examples are linearly 
disjoint from $\mu$.  By analyzing the structure of affine linear maps of nilmanifolds, 
Theorem~\ref{thm3} is reduced  in \S8 to a recent result of Green-Tao  
of polynomial orbits on nilmanifolds (see Lemma~\ref{lem:4-1}).  
 
Throughout the paper there are various double exponential functions like 
$e(e(f(n)))$ against the M\"{o}bius function $\mu(n)$ 
where $e(x)=e^{2\pi i x}$ as usual, and so we have to keep track of 
the dependence of each parameter very carefully. 

\section{Theorem~\ref{thm1}} 
\setcounter{equation}{0}

\subsection{Reduction to the toral case}  
We first reduce to the case that $X$ is a torus (not necessarily connected), that 
is $X={\Bbb T}^r\times C={\Bbb R}^r/{\Bbb Z}^r\times C$ for some integer 
$r\geq 0$ and $C$ is a finite (abelian) group. Since the linear combinations of 
characters $\psi\in \Gamma:=\widehat{X}$, the (discrete) dual group 
of $X$, are dense in $C(X)$ it suffices to show that 
\begin{eqnarray}\label{Thm1/equiv}
\f{1}{N}\sum_{n\leq N} \mu(n) \psi(T^n x)\to 0, \quad \mbox{as } N\to\infty  
\end{eqnarray}
for every fixed $x\in X$ and $\psi\in \Gamma$. So fix 
$\psi\in \Gamma$ and let $C_\psi$ be the smallest closed 
subgroup of $\Gamma$ containing $\psi$ and invariant by $A$. 
Here we are denoting by $T$ the affine linear map $Tx=Ax+b$ of 
$X$ and $A$ acts on $\Gamma$ by $A\phi(x)=\phi (Ax)$ for 
$\phi\in \Gamma, x\in X$. If $C$ is a subgroup of $\Gamma$ 
let $C^\perp$ the annihilator of $C$ be the closed subgroup 
of $X$ given by $C^\perp=\{x\in X: c(x)=1 \mbox{ for all } c\in C\}$.  
Set $X_\psi$ to be the compact quotient group $X/C_\psi^\perp$. 
By definition $\widehat{X/C_\psi^\perp} = C_\psi$. For $x\in X$ and $y\in C_\psi^\perp$, 
\begin{eqnarray*}
T(x+y)=A(x+y)+b=Ax+b+Ay\equiv Ax+b \bmod C_\psi^\perp 
\end{eqnarray*}
since $C_\psi$ is $A$-invariant. Hence $T(x+y)=Tx\bmod C_\psi^\perp$, that 
is $T$ induces an affine linear map $T_\psi$ of $X_\psi$. Put 
another way the flow $\mathscr{X}_\psi=(T_\psi, X_\psi)$ 
is a factor of $\mathscr{X}=(T, X)$. Since we are assuming that 
$\mathscr{X}$ has zero entropy it follows that so does $\mathscr{X}_\psi$. 
This in turn implies that $C_\psi$ is finitely generated, as shown by 
Aoki (see \cite{Aok} page 13). Being the dual of 
$X_\psi$ it follows that 
$X_\psi$ is isomorphic to  
${\Bbb T}^r\times C$ for some  
$r\geq 0$ and finite $C$. Moreover for $n\geq 0$ 
\begin{eqnarray*}
\psi(T^n x)=\widetilde{\psi}(T_\psi^n \dot{x}) 
\end{eqnarray*}
where in the last $\dot{x}$ is the projection of $x$ in 
$X_\psi$ and $\widetilde{\psi}$ is the character on 
$X_\psi$ induced by $\psi$. In particular the observable 
$\psi(T^n x)$ on $\mathscr{X}$ is equal to $\widetilde{\psi}(T_\psi^n \dot{x})$ 
on $\mathscr{X}_\psi$. Thus (\ref{Thm1/equiv}) will follow from the 
linear disjointness of the M\"{o}bius function from $\mathscr{X}_\psi$. This 
completes the reduction to the toral case. 

\subsection{Affine linear maps on a torus}   
We have reduced Theorem~\ref{thm1} to the case that 
${\mathscr X}=(T, X)$ with $X={\Bbb R}^r/{\Bbb Z}^r\times C$ 
with $C$ finite and $T$ in the form (\ref{def/AFF}) and of zero entropy. For our purpose of 
examining observables $\xi(n)$ in this flow, we can ``linearize" the flow by 
doubling the number of variables. That is consider $Y=X\times X$ and the 
linear automorphism $W$ given by 
\begin{eqnarray}\label{def/W=} 
W(x_1, x_2)=(Ax_1+x_2, x_2).  
\end{eqnarray}
${\mathscr Y}=(W, Y)$ is clearly of zero entropy since $\mathscr X$ is so, and the 
orbit $W^n (x_1, b)$ is equal to $(T^n x_1, b), n\geq 1$. Hence it suffices to 
prove Theorem~\ref{thm1} for such $\mathscr Y$'s. That is we can assume that 
${\mathscr X}=(W, X)$ with $X={\Bbb R}^m/{\Bbb Z}^m\times F$, $F$ finite 
and $W$ is a linear automorphism of $X$ of zero entropy. Either by noting that 
the induced action of $W$ on $\widehat{X}$ must preserve $1\times F$ 
(since these are precisely the elements of finite order in $\widehat{X}$) or using the 
continuity of $W$ to conclude that it preserves the connected component of $0$ in $X$ 
(i.e. ${\Bbb R}^m/{\Bbb Z}^m\times \{0\}$), we see that $W$ takes 
the block triangular form 
\begin{eqnarray}
W(\th, f)=(B\th+Cf, Df) 
\end{eqnarray}
where $B: {\Bbb R}^m/{\Bbb Z}^m \to {\Bbb R}^m/{\Bbb Z}^m$ is an automorphism 
of this (connected) torus, $C: F\to {\Bbb R}^m/{\Bbb Z}^m$ is a homomorphism and 
$D: F\to F$ is an automorphism of $F$. The automorphism $B$ lifts to a linear 
automorphism $\widetilde{B}$ of ${\Bbb R}^m$ which preserves ${\Bbb Z}^m$, 
so that $\widetilde{B}\in GL_m (\Bbb Z)$. Since $W$ has zero entropy so does $B$ 
and it is known that this implies that $\widetilde{B}$ is quasi-unipotent \cite{Dan}.  
That is, for some $\nu_1\geq 1, \widetilde{B}^{\nu_1}=U$ is unipotent, 
or $U=I+N_1$ with $N_1$ nilpotent and $I$ the identity matrix. 
Also since $F$ is finite it is clear that $D^{\nu_2}=I$ for  some 
$\nu_2\geq 1$. Let $\nu= \mbox{lcm}(\nu_1, \nu_2)$. Then we  have that 
\begin{eqnarray}
W^\nu (\th, f) = ((I+N_1)\th+C_1 f, f) 
\end{eqnarray}
where $C_1$ is a morphism from $F$ to ${\Bbb R}^m/{\Bbb Z}^m$. In particular 
\begin{eqnarray}
\Phi:=W^\nu= I+N 
\end{eqnarray}
where $N: X\to X$ satisfies $N^{k+1}\equiv 0$ for some $k\geq 0$. 
Thus for $q\geq 0$ an integer 
\begin{eqnarray}
\Phi^q 
=\sum_{t=0}^q {q \choose t}N^t 
=\sum_{t=0}^{\min (k,q)} 
{q \choose t}N^t.  
\end{eqnarray}
Writing $n\geq 0$ as $n=q \nu +l$ with $0\leq l<\nu$ we have 
\begin{eqnarray*}
W^n = W^{q \nu +l} =\Phi^q W^l 
\end{eqnarray*}
and hence if $x\in X$ and $n=q\nu+l$ then 
\begin{eqnarray}
W^n x  
=\sum_{t=0}^{\min (k,q)} {q \choose t}N^t W^l x
=\sum_{t=0}^{\min (k,q)} {q \choose t} \xi_{l, t}
\end{eqnarray}
where 
\begin{eqnarray}
\xi_{l,t}=N^t W^l x. 
\end{eqnarray}
For $q$ varying, $q\geq k$ and $\psi\in \widehat{X}$ fixed we have 
\begin{eqnarray}\label{Wqnu+l=}
\psi(W^{q \nu +l} x)   
&=& \psi\bigg(\sum_{t=0}^k {q \choose t} \xi_{l, t}\bigg) \nonumber\\ 
&=& \psi (\xi_{l, 0})
\psi (\xi_{l, 1})^{{q \choose 1}}
\psi (\xi_{l, 2})^{{q \choose 2}}
\cdots \psi (\xi_{l, k})^{{q \choose k}}. 
\end{eqnarray}
The character $\psi\in\widehat{X}$ has the form 
$\psi: x\mapsto e(\langle v, x\rangle)$ for some 
$v=(v_1, \ldots, v_m)\in {\Bbb Z}^m$ where $\langle v, x\rangle$ means the 
dot product in ${\Bbb R}^m$, and hence the right-hand side 
of (\ref{Wqnu+l=}) is $e(Y(q))$ where $Y(q)$ is a polynomial in $q$ with 
degree $\leq k$ and with coefficients depending on $v$ and the $\xi$'s.  Changing variables 
from $q$ to $n$ by $n=\nu q+l$ with $0\leq l\leq \nu-1$, we see that 
$Y(q)=\phi(n)$ a polynomial in $n$ with degree $\leq k$ and coefficients 
depending on $v, \nu, l$ and the $\xi$'s.  It follows that 
\begin{eqnarray}\label{muWnx}
\sum_{n\leq N}\mu(n)\psi (W^n x)
&=&\sum_{l=0}^{\nu-1}\sum_{n\leq N\atop n\equiv l(\bmod \nu)}
\mu(n)\psi (W^n x)\nonumber\\
&=&\sum_{l=0}^{\nu-1}\sum_{n\leq N\atop n\equiv l(\bmod \nu)}\mu(n)
e(\phi(n)). 
\end{eqnarray}

Theorem~\ref{thm1} for $(W, X)$ now follows from the following classical 
result proved by Davenport \cite{Dav} for $\phi$ linear and 
by Hua \cite{Hua} for $\phi$ nonlinear. This lemma will also be used in later sections. 

\begin{lemma}\label{lem:VDH}
Let $\nu$ be a positive integer and $0\leq l<\nu$.   
Let 
$$
\phi(u)=\a_d u^d + \a_{d-1} u^{d-1}+\cdots +\a_1 u +\a_0
$$
be a real polynomial of degree $d>0$. Then, for arbitrary $A>0$,
\begin{eqnarray}\label{set/VDH}
\sum_{n\leq N\atop n\equiv l(\bmod \nu)} 
\mu(n)e(\phi(n))
\ll \f{N}{\log^A N}
\end{eqnarray}
where the implied constant may depend on $A$ and $\nu,$     
but is independent of any of the coefficients $\a_d,\ldots,\a_0$.
\end{lemma}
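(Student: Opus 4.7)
The plan is to strip off the arithmetic progression condition by additive characters mod $\nu$, and then reduce to the classical Davenport--Hua bound on $\sum_{n\le N}\mu(n)e(P(n))$ that is uniform in the coefficients of $P$.

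First I would write, via orthogonality of additive characters mod $\nu$,
$$\mathbf{1}[n\equiv l\!\!\!\pmod{\nu}]=\f1\nu\sum_{j=0}^{\nu-1}e\!\le(\f{j(n-l)}{\nu}\ri),$$
so that
$$\sum_{\substack{n\le N\\ n\equiv l(\nu)}}\mu(n)e(\phi(n))=\f1\nu\sum_{j=0}^{\nu-1}e\!\le(-\f{jl}{\nu}\ri)\sum_{n\le N}\mu(n)e(\phi_j(n)),$$
where $\phi_j(u):=\phi(u)+\f{j}{\nu}u$ is a real polynomial in $u$ of the same degree $d$ as $\phi$ (only the linear coefficient is perturbed; the leading coefficient $\a_d$ is unchanged).

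Next, for each $j$ the inner sum has the shape $\sum_{n\le N}\mu(n)e(P(n))$ with $P$ a real polynomial of degree $d$. The estimate
$$\sum_{n\le N}\mu(n)e(P(n))\ll \f{N}{\log^A N},$$
uniformly in all coefficients of $P$ with an implied constant depending only on $A$ and $d$, is Davenport's theorem when $d=1$ and Hua's theorem when $d\ge 2$. Summing the $\nu$ contributions from the previous step then gives the claimed bound, with a constant depending on $A$ and $\nu$ but independent of $\a_0,\dots,\a_d$.

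The real content, of course, is Step 2, and this is where the main obstacle lies. The classical proof proceeds by a Vaughan-type combinatorial identity for $\mu$, splitting the sum into Type~I pieces $\sum_m a_m\sum_{k\le N/m}e(P(mk))$ and Type~II bilinear pieces. The Type~I sums are handled by Weyl's method: iterated differencing reduces $e(P(mk))$ over $k$ to a linear exponential sum, and one exploits approximations of $\a_d m^d$ by rationals. The Type~II sums are handled by Cauchy--Schwarz followed by the same differencing machinery. To obtain the $\log^{-A}N$ saving uniformly in the coefficients one has to treat the ``major arc'' case (where $\a_d$ admits a rational approximation with small denominator) by invoking the Siegel--Walfisz theorem on primes in progressions, which is the source of the ineffectivity in $A$. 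The ``minor arc'' case is then controlled by Weyl's inequality. All of this is classical and is exactly what Davenport \cite{Dav} and Hua \cite{Hua} established, so the lemma follows by citation after the additive-character reduction above.
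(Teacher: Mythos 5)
Your proposal is correct, and the reduction via additive characters mod $\nu$ is sound. For this lemma the paper itself offers no argument: it simply asserts that the bound ``can be established by Vinogradov's method or its modern variants, such as Vaughan's identity or Heath-Brown's identity,'' and points to Hua's Theorem~10 for the analogous estimate with $\Lambda$ in place of $\mu$ --- in other words it implicitly asks the reader to carry the congruence condition through the Type~I/Type~II machinery themselves. Your route is cleaner: the orthogonality identity $\mathbf{1}[n\equiv l\,(\mathrm{mod}\,\nu)]=\frac{1}{\nu}\sum_{j<\nu}e(j(n-l)/\nu)$ converts the restricted sum into $\nu$ unrestricted sums $\sum_{n\le N}\mu(n)e(\phi_j(n))$ with $\phi_j(u)=\phi(u)+\frac{j}{\nu}u$, after which the lemma really is a direct citation of Davenport (degree~$1$) and Hua (degree~$\ge 2$), each of which is uniform in the coefficients. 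One small point worth making explicit: when $d=1$ the shift by $\frac{j}{\nu}$ can annihilate the leading coefficient $\alpha_1$, so that $\phi_j$ degenerates to a constant; but Davenport's estimate is uniform over all real linear coefficients including zero (where it reduces to the prime number theorem with a power-of-log saving), so this case is still covered. For $d\ge 2$ the leading coefficient is untouched and Hua's bound applies verbatim. Your discussion of the internal mechanics --- Vaughan decomposition, Weyl differencing, Siegel--Walfisz on the major arcs and the resulting ineffectivity in $A$ --- is accurate context but is not logically needed once the reduction and citation are in place.
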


This can be established 
by Vinogradov's method or its modern variants, such as Vaughan's identity 
or Heath-Brown's identity.  The estimate (\ref{set/VDH}), with $\mu$ replaced by 
$\L$ the von Mangoldt function, 
was established in Hua \cite{Hua}, Theorem~10.  

\section{Theorem~\ref{thm2} with $\a$ rational}   
\setcounter{equation}{0}

\subsection{Reduction}   
Without loss of generality we may assume that $a=d=1$ in (\ref{def/SKEW}). 
Thus 
\begin{eqnarray}\label{def/SimS}
T: \ (x_1, x_2) \mapsto (x_1+\a, cx_1+ x_2 + h(x_1)), 
\end{eqnarray} 
where $c\in \Bbb Z, \a\in \Bbb R$ and $h$ is a smooth periodic function of period $1$. 
Since the linear combinations of 
characters $\psi\in \widehat{\Bbb T}^2$ are dense in $C({\Bbb T}^2)$, 
it is sufficient to show that 
\begin{eqnarray*}\label{Sum/mufSnx}
\sum_{n\leq N} \mu(n) \psi(T^n x)=o(N),  
\quad \mbox{as } N\to\infty  
\end{eqnarray*}
for any fixed $x\in X$ and any fixed $\psi\in \widehat{\Bbb T}^2$. 
Note that any $\psi\in \widehat{\Bbb T}^2$ has the form 
$\psi: x\mapsto e(\langle b, x\rangle)$ for some $b=(b_1, b_2)\in {\Bbb Z}^2$ 
where $\langle b, x\rangle$ means the dot product in ${\Bbb R}^2$.  
Applying (\ref{def/SimS}) repeatedly, we have  
$T^n: (x_1,x_2)\mapsto (y_1(n), y_2(n))$ 
with
\begin{eqnarray}\label{y1=}
y_1(n)=x_1+n\a, 
\end{eqnarray}
\begin{eqnarray}\label{y2=}
y_2(n)= c \f{n(n-1)}{2}\a + c n x_1 + x_2+ \sum_{j=0}^{n-1} h(x_1+j\a). 
\end{eqnarray}
It follows that 
\begin{eqnarray*}
\langle b, y(n)\rangle
=b_1y_1(n)+b_2y_2(n)
=P(n) + b_2 \sum_{j=0}^{n-1} h(x_1+j\a), 
\end{eqnarray*}
where 
\begin{eqnarray}\label{def/Pnax}
P(n)= b_1(x_1+n\a) +b_2 \bigg(c \f{n(n-1)}{2}\a + c n x_1+x_2\bigg),  
\end{eqnarray}
a polynomial of $n$ with degree at most $2$ and with coefficients depending on
$\a, x_1, c,$ and $b$. Put 
\begin{eqnarray}\label{T2/S=See}
S(N)
&=&\sum_{n\leq N}\mu(n)e(\langle b, y(n)\rangle)  \nonumber\\ 
&=&\sum_{n\leq N}\mu(n)e\bigg(P(n)+ b_2 \sum_{j=0}^{n-1} h(x_1+j\a)\bigg). 
\end{eqnarray} 
Then the aim is to prove that 
\begin{eqnarray}\label{Sum/muee} 
S(N)=o(N) 
\end{eqnarray} 
for any fixed $x=(x_1, x_2) \in {\Bbb T}^2$ and any fixed 
$b=(b_1, b_2) \in {\Bbb Z}^2$, which will be done 
in \S\S3-6.  We may suppose that $b_2\not=0$ since otherwise (\ref{Sum/muee})  
follows from Lemma~\ref{lem:VDH} with $\nu=l=1$ immediately. 

Some of our results in \S\S3-6 actually hold for any smooth periodic $h$, not necessarily analytic. 
Suppose that $h:\Bbb R\to\Bbb R$ is a smooth periodic function with period $1$. 
Then it has the Fourier expansion
\begin{eqnarray}\label{h/Ex/Ra}
h(x)=\sum_{m\in \Bbb Z} \hat{h}(m)e(mx),
\end{eqnarray}
which converges absolutely and uniformly on $\Bbb R$, and its coefficients $\hat{h}(m)$
satisfy
\begin{eqnarray}\label{hath/Smo}
\hat{h}(m)\ll_A (|m|+2)^{-A}
\end{eqnarray}
for arbitrary $A>0$. 
We can transform $S(N)$ by inserting the Fourier expansion (\ref{h/Ex/Ra}) of $h$. Thus,  
\begin{eqnarray*}\label{SUMhjx}
\sum_{j=0}^{n-1} h(x_1+j\a)
&=&\sum_{m\in\Bbb Z}\hat{h}(m)e(mx_1) \sum_{j=0}^{n-1} e(jm\a) \nonumber\\  
&=&\sum_{m\in\Bbb Z} \hat{h}(m)e(mx_1) \f{e(nm\a)-1}{e(m\a)-1}
\end{eqnarray*}
where we understand that
\begin{eqnarray}\label{=n/conv}
\f{e(nm\a)-1}{e(m\a)-1}=n \quad \mbox{for } m\a\in\Bbb Z.
\end{eqnarray} 
This can happen only when $\a$ is rational. 
It follows that 
\begin{eqnarray}\label{T2/S/Simp}
S(N)=\sum_{n\leq N}\mu(n)e\bigg(P(n)
+ b_2 \sum_{m\in\Bbb Z} \hat{h}(m)e(mx_1) \f{e(nm\a)-1}{e(m\a)-1}\bigg). 
\end{eqnarray} 

\subsection{The case of rational $\a$}  

In this section we establish (\ref{Sum/muee}) for rational $\a$. 
We remark that in this case the Fourier expansion (\ref{h/Ex/Ra}) 
of $h$ is not necessary. 

\begin{proposition}\label{prop/rat}
Let $S(N)$ be as in (\ref{T2/S=See}), and 
$h:\Bbb R\to\Bbb R$ a smooth periodic function with period $1$. 
If $\a \in \Bbb Q$ then 
\begin{eqnarray}\label{Sum/muee/Rat}
S(N)\ll N\log^{-A} N, 
\end{eqnarray}  
where $A>0$ is arbitrary, and the implied constant depends on $A$ and $\a$ 
only.  
\end{proposition}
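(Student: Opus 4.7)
The key observation is that when $\a=p/q$ is rational one has $q\a=p\in\Z$, so the cocycle sum $\sum_{j=0}^{n-1} h(x_1+j\a)$ is exactly linear in $n$ up to a correction that depends only on $n\bmod q$. Once this is noted, the full phase in $S(N)$ becomes a polynomial in $n$ of degree at most $2$ on each residue class modulo $q$, and Lemma~\ref{lem:VDH} finishes the proof.

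First I would set
\begin{eqnarray*}
H(x_1):=\sum_{j=0}^{q-1} h(x_1+j\a), \qquad C_r(x_1):=\sum_{j=0}^{r-1} h(x_1+j\a) \quad (0\leq r<q).
\end{eqnarray*}
Writing $n=kq+r$ with $0\leq r<q$ and using $q\a\in\Z$ together with the $1$-periodicity of $h$, a direct block decomposition (each block $\sum_{j=iq}^{(i+1)q-1} h(x_1+j\a)$ equals $H(x_1)$ since $iq\a=ip\in\Z$) yields
\begin{eqnarray*}
\sum_{j=0}^{n-1} h(x_1+j\a)=k\,H(x_1)+C_r(x_1).
\end{eqnarray*}
Substituting $k=(n-r)/q$, the entire phase in (\ref{T2/S=See}) becomes
\begin{eqnarray*}
\phi_r(n):=P(n)+\f{b_2 H(x_1)}{q}(n-r)+b_2 C_r(x_1),
\end{eqnarray*}
a polynomial in $n$ of degree at most $2$ (since $P$ is quadratic in $n$ by (\ref{def/Pnax})) whose coefficients depend only on $r$, $x_1$, $x_2$, $\a$, $c$, $b$ and $h$.

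Next I would split $S(N)=\sum_{r=0}^{q-1} S_r(N)$, where
\begin{eqnarray*}
S_r(N):=\sum_{\substack{n\leq N\\ n\equiv r\,(\bmod\, q)}}\mu(n)\, e(\phi_r(n)),
\end{eqnarray*}
and invoke Lemma~\ref{lem:VDH} with $\nu=q$ and $l=r$ to obtain $S_r(N)\ll_A N\log^{-A} N$, the implied constant depending only on $A$ and $q$ and being independent of the coefficients of $\phi_r$. Summing over the $q$ residue classes (with $q$ fixed, since $\a$ is fixed) then gives (\ref{Sum/muee/Rat}).

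The argument is essentially routine. The one substantive step is the linearity-in-$n$ of the cocycle sum, which is precisely where rationality of $\a$ is used; no serious obstacle arises beyond this observation and the citation of Lemma~\ref{lem:VDH}.
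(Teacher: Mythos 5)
Your proof is correct and takes a genuinely different, more elementary route than the paper's. You exploit the $q$-periodicity of $\{x_1+j\alpha \bmod 1\}_{j\geq 0}$ directly at the level of the Birkhoff sum: the block identity $\sum_{j=0}^{n-1} h(x_1+j\alpha) = kH(x_1) + C_r(x_1)$ (with $n=kq+r$) shows that the phase in $S(N)$ is, on each residue class $r\bmod q$, a quadratic polynomial in $n$, and Lemma~\ref{lem:VDH} with $\nu=q$ finishes. The paper instead operates in the Fourier domain: it splits the modes of $\hat h$ according to $q\mid m$ versus $q\nmid m$, recovers a linear term $n\beta$ from the $q\mid m$ harmonics and a nonpolynomial remainder $g(n\alpha+x_1)-g(x_1)$ with $g$ smooth from the $q\nmid m$ harmonics, strips off $e(b_2 g(n\alpha+x_1))$ by a secondary Fourier expansion of $e(b_2 g(u))$, and only then applies Lemma~\ref{lem:VDH} without splitting by residue class. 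Your argument is the cleaner one for this proposition in isolation: it needs $h$ merely continuous rather than smooth and avoids the auxiliary Fourier expansion altogether. The paper's Fourier-splitting approach is chosen because it serves as the template for the irrational-$\alpha$ case in \S\S4--6, where the block decomposition you use here is no longer available and the harmonic splitting becomes essential.
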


\begin{proof} 
If $\a=0$ then (\ref{T2/S/Simp}) becomes 
\begin{eqnarray*}
S(N)=\sum_{n\leq N}\mu(n)e\{P(n)+ b_2 n h(x_1)\},  
\end{eqnarray*} 
and the desired result follows directly from Lemma~\ref{lem:VDH}. 

Suppose $\a=l/q$ with $(l,q)=1$. We start from (\ref{T2/S=See})  
and split the sum over $j$ there into residue classes modulo $q$. Since 
$h$ has period $1$,  
\begin{eqnarray*}
\sum_{j=0}^{n-1} h(x_1+j\a)
= \sum_{j_0=0}^{q-1} h\bigg(x_1+ \f{j _0 l}{q}\bigg) \bigg[\f{n-j_0}{q}\bigg]. 
\end{eqnarray*}
If $n\equiv k(\bmod q)$ for some $0\leq k\leq q-1$, 
then the above equals  
$\g_1\f{n-k}{q}+ \g_2(\f{n-k}{q}-1)$ where 
\begin{eqnarray*}
\g_1 = \sum_{j_0=0}^{k} h\bigg(x_1+ \f{j _0 l}{q}\bigg), \quad   
\g_2 = \sum_{j_0=k+1}^{q-1} h\bigg(x_1+ \f{j _0 l}{q}\bigg). 
\end{eqnarray*}
It follows from this and (\ref{T2/S=See}) that 
\begin{eqnarray*}\label{S/=See/con}
S(N)
&=&\sum_{k =0}^{q-1} 
\sum_{n\leq N\atop n\equiv k (\bmod q)} 
\mu(n)e\bigg(P(n)+ b_2 \g_1\f{n-k}{q}+ b_2 \g_2 \bigg(\f{n-k}{q}-1\bigg) \bigg) \\ 
&\ll& \f{N}{\log^A N}
\end{eqnarray*} 
by Lemma~\ref{lem:VDH}, where the implied constant depends on $A$ and $q$ only.  
This proves the proposition.    
\end{proof} 

\section{The continued fraction expansion of $\a$}  
\setcounter{equation}{0}
\subsection{The continued fraction expansion of $\a$.} 
From now on we assume that $\a$ is irrational, and our argument will 
depend on the continued fraction expansion of $\a$. 
Every real number $\a$ has its continued fraction 
representation 
\begin{eqnarray}\label{a=a0a1+}
\a=a_0+\f{1}{a_1+\f{1}{a_2+\cdots}}
\end{eqnarray}
where $a_0=[\a]$ is the integral part of $\a$, 
and $a_1, a_2, \ldots$ are positive integers. 
The expression (\ref{a=a0a1+}) is infinite since $\a\not\in \Bbb Q$.   
We write $[a_0; a_1, a_2, \ldots]$ for the 
expression on the right-hand side of (\ref{a=a0a1+}), which is the limit of 
the finite continued expressions 
\begin{eqnarray}\label{a=a0a1F}
[a_0; a_1, a_2, \ldots, a_k]=a_0+\f{1}{a_1+\f{1}{a_2+\cdots+\f{1}{a_k}}}
\end{eqnarray}
as $k\to \infty$. 
Writing 
\begin{eqnarray*}
\f{l_k}{q_k}=[a_0; a_1, a_2, \ldots, a_k], 
\end{eqnarray*}
we have $l_0=a_0, l_1=a_0a_1+1, q_0=1, q_1=a_1, $ 
and for $k\geq 2$, 
\begin{eqnarray*}
l_k=a_k l_{k-1}+l_{k-2}, \quad 
q_k=a_k q_{k-1}+q_{k-2}.  
\end{eqnarray*}
Since $\a$ is irrational we have $q_{k+1}\geq q_k+1$ for all $k\geq 1$. 
An induction argument gives the stronger 
assertion that 
$q_k\geq 2^{(k-1)/2}$ 
for all $k\geq 2$, and thus $q_k$ increases at least like an exponential function of $k$.  
The irrationality of $\a$ also implies that, for all $k\geq 2$, 
\begin{eqnarray}\label{ratAPP}
\f{1}{2 q_k q_{k+1}}<\bigg|\a-\f{l_k}{q_k}\bigg| <\f{1}{q_kq_{k+1}}, 
\end{eqnarray} 
which will be used in our later argument. 

Let ${\mathcal Q}$ be the set of all $q_k$ with $k=0, 1, 2, \ldots$; 
note that $q_0=1$.  
Sometimes it is convenient to abbreviate $q_k$ to $q$, and 
$q_{k+1}$ to $q^+$. Let $B$ be a large positive constant to be decided later. 
The set 
${\mathcal Q}$ can be partitioned as 
${\mathcal Q}^{\flat}\cup {\mathcal Q}^{\sharp}$ where 
\begin{eqnarray}\label{Q>Q<}
{\mathcal Q}^{\flat}=\{1\}\cup \{q\in {\mathcal Q}: q^+ \leq q^B\}, 
\quad 
{\mathcal Q}^{\sharp}=\{q\in {\mathcal Q}: q^+ > q^B \mbox{ and } q\geq 2\}. 
\end{eqnarray}

\begin{lemma}\label{lem:2ser}
Let $h:\Bbb R\to\Bbb R$ be a smooth periodic function
of period $1$. Then the following two series 
\begin{eqnarray}\label{Ser/1/Q}
\sum_{q\in {\mathcal Q}}
\sum_{q\leq |m| < q^+ \atop q\nmid m} 
\f{|\hat{h}(m)|}{\|m\a\|}, 
\quad 
\sum_{q\in {\mathcal Q}^{\flat}} 
\sum_{q\leq |m| < q^+ \atop q|m} 
\f{|\hat{h}(m)|}{\|m\a\|}
\end{eqnarray}
are convergent.   
\end{lemma}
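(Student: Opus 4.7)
\medskip
\noindent\textbf{Proof plan.}
The convergence of both series will follow from combining the rapid decay $|\hat h(m)|\ll_A(|m|+2)^{-A}$ (valid for every $A>0$ by smoothness of $h$) with the sharp bounds (\ref{ratAPP}) on $\|q_k\a\|$ and the classical best-approximation property $\|r\a\|\geq\|q^-\a\|$ for $1\leq r<q$.

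For the \emph{second} series, by symmetry we may restrict to $m=sq$ with $1\leq s<q^+/q$. Then $s\|q\a\|<(q^+/q)(1/q^+)=1/q<1/2$, so $\|sq\a\|=s\|q\a\|>s/(2q^+)$ by (\ref{ratAPP}). Hence the inner sum is dominated by
\begin{eqnarray*}
2q^+\sum_{s\geq 1}\f{|\hat h(sq)|}{s}\ll_A q^+\,q^{-A}.
\end{eqnarray*}
Since $q^+\leq q^B$ on ${\mathcal Q}^{\flat}$, this is $\ll_A q^{B-A}$; choosing $A=B+3$ and summing over $q=q_k\in{\mathcal Q}^{\flat}$ via $q_k\geq 2^{(k-1)/2}$ yields convergence.

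For the \emph{first} series, the obstacle is that $q$ ranges over all of ${\mathcal Q}$: when $q\in{\mathcal Q}^{\sharp}$ and $\a$ is Liouvillian, the naive bound $\|m\a\|^{-1}\leq 2q^+$ does not suffice. The plan is to exploit $q\nmid m$ by writing $m=aq+r$ with $1\leq r\leq q-1$, $a\geq 1$, and $\a=l/q+\epsilon$ where $|\epsilon|<1/(qq^+)$. A direct computation gives
\begin{eqnarray*}
m\a\equiv\f{rl}{q}+m\epsilon\pmod 1.
\end{eqnarray*}
Since $(l,q)=1$ and $q\nmid r$, one has $\|rl/q\|\geq 1/q$, while $|m\epsilon|<q^+\cdot 1/(qq^+)=1/q$. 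For the ``bulk'' residues (those $r$ with $\|rl/q\|\geq 3/q$, all but a bounded set) the reverse triangle inequality gives $\|m\a\|\geq 2/q$ uniformly in $a$, so summing $|\hat h(aq+r)|\ll_A(aq)^{-A}$ over $a\geq 1$ and then over the $O(q)$ such residues contributes $\ll_A q^{2-A}$, which is summable over $q\in{\mathcal Q}$ for $A>3$.

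The main obstacle is the bounded number of exceptional residues (essentially $r\equiv\pm l^{-1}\pmod q$) where cancellation in $\{rl/q\}+m\epsilon$ can drive $\|m\a\|$ small. The key observation is that such cancellation requires $m\approx\mp 1/(q\epsilon)$, whose magnitude is at least $1/(q|\epsilon|)>q^+$; that is, the cancellation point lies just \emph{beyond} the range $[q,q^+)$. Consequently the worst $m\in[q,q^+)$ realizing the near-cancellation satisfies $m\geq q^+-q$, so $|\hat h(m)|\ll_A(q^+)^{-A}$, and even with the pessimistic bound $\|m\a\|\geq\|q\a\|>1/(2q^+)$ one obtains $|\hat h(m)|/\|m\a\|\ll_A(q^+)^{1-A}$. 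The neighboring $m_*-jq$ ($j\geq 1$) traverse an arithmetic progression of common difference $q|\epsilon|$, along which $\|m\a\|$ increases linearly; a dyadic summation combined with the super-polynomial decay of $\hat h$ yields a comparable bound. Summing over the $O(1)$ exceptional residues and over $q\in{\mathcal Q}$ via $q_{k+1}\geq q_k\geq 2^{(k-1)/2}$ completes the argument. The delicate point is precisely this localization of the cancellation just outside $[q,q^+)$, which converts the decay of $\hat h$ into a gain large enough to overcome the unbounded growth of $q^+$ in the Liouvillian case.
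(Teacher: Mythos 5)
Your handling of the second series is essentially identical to the paper's (isolate $m=sq$, use (\ref{ratAPP}) to get $\|m\alpha\|\gg s/q^+$, then invoke $q^+\leq q^B$ on $\mathcal{Q}^{\flat}$ and the rapid decay of $\hat{h}$). For the first series you take a genuinely different route. The paper deduces the classical partial-sum bound (\ref{posh/t}) for $\sum_{m\leq t,\, q\nmid m}\|m\alpha\|^{-1}$ and then finishes by partial summation against the decay of $\hat{h}$. You instead decompose the residues $r=m\bmod q$ into a ``bulk'' set (where $\|rl/q\|\geq 3/q$ forces $\|m\alpha\|\gg 1/q$ uniformly, giving a trivial $\ll q^{2-A}$) and a set of $O(1)$ exceptional residues, and for the latter use both halves of (\ref{ratAPP}) to localize the near-resonance $rl/q+m\epsilon\approx 0$ at $|m|\geq 1/(q|\epsilon|)>q^+$, i.e.\ just outside the range $[q,q^+)$. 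This is more elementary and self-contained (it re-derives from (\ref{ratAPP}) the Diophantine input the paper outsources to the standard lemma), at the price of a finer case analysis. The approach is sound, but the crucial sentence ``a dyadic summation combined with the super-polynomial decay of $\hat h$ yields a comparable bound'' is a sketch that needs to be carried out: with $m=m_0-jq$ one has $\|m\alpha\|\geq jq|\epsilon|>j/(2q^+)$, so $\sum_{j\geq 1}(m_0-jq)^{-A}\,2q^+/j$ splits at $m_0-jq\asymp m_0/2$ into $\ll(q^+)^{1-A}\log(q^+/q)$ and $\ll q^{1-A}$, both of which sum over $\mathcal{Q}$. Also note that only those exceptional residues where $s$ and $\epsilon$ have opposite signs actually produce near-cancellation; the others satisfy $\|m\alpha\|\geq 1/q$ and belong in the bulk.
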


\begin{proof} We just handle positive $m$; proof for negative $m$ 
is the same. We first establish the convergence of the first series in 
(\ref{Ser/1/Q}). By 
(\ref{ratAPP}), $\a$ can be written in the form 
$$
\a=\f{l}{q}+\f{\g}{q(q+1)}, \quad q\in {\mathcal Q}, \ (l,q)=1, \ |\g|<1. 
$$
Therefore, for $m=1, 2, \ldots, q-1$,  
\begin{eqnarray*}
m\a
=\f{m l}{q}+\f{m\g}{q(q+1)} 
=\f{m l}{q}+\f{\g'}{q+1}, \quad |\g'|<1, 
\end{eqnarray*} 
and hence 
\begin{eqnarray*}
\sum_{m=1}^{q-1} \f{1}{\|m\a\|}
=\sum_{m=1}^{q-1} \f{1}{\|\f{m l}{q}+\f{\g'}{q+1}\|}. 
\end{eqnarray*} 
We write $m l\equiv r(\bmod q)$ with $1\leq |r|\leq q/2$, so that 
the last denominator is 
\begin{eqnarray*}
\geq \f{|r|}{q}-\f{|\g'|}{q+1}=\f{(q+1)|r|-q|\g'|}{q(q+1)}
\geq \f{|r|}{q(q+1)}, 
\end{eqnarray*} 
and consequently 
\begin{eqnarray*}
\sum_{m=1}^{q-1} \f{1}{\|m\a\|}
\ll \sum_{1\leq r\leq q/2} \f{q(q+1)}{r}
\ll q(q+1)\log q.  
\end{eqnarray*} 
It follows that, for any positive $t$, 
\begin{eqnarray}\label{posh/t}
\sum_{1\leq m\leq t\atop q\nmid m} \f{1}{\|m\a\|}
\ll \bigg(\f{t}{q}+1\bigg)q^2\log q. 
\end{eqnarray} 
By (\ref{hath/Smo})  and partial integration, 
\begin{eqnarray*}
&& \sum_{q\leq m< q^+\atop q\nmid m} \f{|\hat{h}(m)|}{\|m\a\|}
\ll \sum_{q\leq m< q^+\atop q\nmid m} \f{m^{-A}}{\|m\a\|} 
\ll \int_q^{\infty} t^{-A} 
d\bigg\{\sum_{1\leq m\leq t\atop q\nmid m} \f{1}{\|m\a\|}\bigg\} \\ 
&& 
\quad \ll q^2\log q \int_q^{\infty} t^{-A}\bigg(\f{t}{q}+1\bigg)dt  
\ll_A q^{-A+3}\log q,  
\end{eqnarray*} 
and hence the first series in (\ref{Ser/1/Q}) is convergent. 
 
Next we consider the second series in (\ref{Ser/1/Q}). We assume $q>1$ since the 
case $q=1$ can be easily checked.  
Again by (\ref{ratAPP}),    
$$
\f{m}{2q q^+}< \bigg|m\a- m\f{l}{q}\bigg|< \f{m}{q q^+}. 
$$
Since $q|m$, we may write $m=m' q$, and hence the above becomes 
$$
\f{m'}{2 q^+}< \|m\a\|< \f{m'}{q^+}.  
$$
It follows that 
\begin{eqnarray}\label{qq+q|m}
\sum_{q\leq m< q^+ \atop q|m}\f{1}{ \|m\a\|}
\leq \sum_{m'\leq q^+/q} \f{2q^+}{m'}
\ll q^+\log q^+,   
\end{eqnarray}
and the last term is $\ll q^B\log (q^B)$ since $q\in {\mathcal Q}^{\flat}$. 
From this and (\ref{hath/Smo}) we deduce that 
\begin{eqnarray*}
\sum_{q\leq m< q^+ \atop q|m}\f{|\hat{h}(m)|}{ \|m\a\|}
\ll q^{-A+B}\log (q^B),    
\end{eqnarray*} 
which proves that second series in (\ref{Ser/1/Q}) is also convergent. 
The lemma is proved. 
\end{proof}

\subsection{Transformation of the sum $S(N)$.}  
Lemma~\ref{lem:2ser} can be used to understand the sum over $m$   
in (\ref{T2/S/Simp}); it implies that the following two series 
\begin{eqnarray}\label{Ser/1st/}
\sum_{q\in {\mathcal Q}}
\sum_{q\leq |m|< q^+ \atop q\nmid m} 
\hat{h}(m)e(mx_1) \f{e(nm\a)}{e(m\a)-1}
\end{eqnarray}
and 
\begin{eqnarray}\label{Ser/2nd/}
\quad \sum_{q\in {\mathcal Q}^{\flat}} 
\sum_{q\leq |m|< q^+ \atop q|m}\hat{h}(m)e(mx_1) \f{e(nm\a)}{e(m\a)-1} 
\end{eqnarray}
are absolutely convergent. Denote by $g(n\a+x_1)$ the sum of these two series,  that is 
\begin{eqnarray*}\label{DVD}
g(n\a+x_1)
=\bigg\{\sum_{q\in {\mathcal Q}}
\sum_{q\leq |m|< q^+ \atop q\nmid m} 
+\quad \sum_{q\in {\mathcal Q}^{\flat}} 
\sum_{q\leq |m|< q^+ \atop q|m}\bigg\}
\hat{h}(m)e(mx_1) \f{e(nm\a)}{e(m\a)-1},   
\end{eqnarray*} 
where $g:\Bbb R\to\Bbb R$ is a  
smooth periodic function of period $1$.  
It follows that 
\begin{eqnarray*}
\bigg\{\sum_{q\in {\mathcal Q}}
\sum_{q\leq |m|< q^+ \atop q\nmid m} 
+\quad \sum_{q\in {\mathcal Q}^{\flat}} 
\sum_{q\leq |m|< q^+ \atop q|m}\bigg\}
\hat{h}(m)e(mx_1) \f{e(nm\a)-1}{e(m\a)-1}  
=g(n\a+x_1)
-g(x_1). 
\end{eqnarray*} 
\medskip 
Therefore the sum over $m$ in (\ref{T2/S/Simp}) can be 
written as 
\begin{eqnarray}\label{n1+ON+g-g}
g(x_1+n\a)-g(x_1)+ H(x)
\end{eqnarray}
with 
\begin{eqnarray}\label{Phix=}
H(x)=\sum_{q\in {\mathcal Q}^{\sharp}}
\sum_{q\leq |m|<q^+ \atop q|m} 
\hat{h}(m)e(mx_1) \f{e(xm\a)-1}{e(m\a)-1}. 
\end{eqnarray}
Inserting these into (\ref{T2/S/Simp}), we have 
\begin{eqnarray*}\label{See/con2}
S(N)=e(-b_2 g(x_1)) 
\sum_{n\leq N}\mu(n)e\{P(n)+b_2 g(n\a+x_1)+ b_2 H(n)\}
\end{eqnarray*}
with $P$ as in (\ref{def/Pnax}). 

In the following we shall prove that 
the factor $e\{b_2 g(n\a+x_1)\}$ can be removed by Fourier analysis, and is hence 
harmless.  Since $g:\Bbb R\to\Bbb R$ is a  
smooth periodic function of period $1$, we have the Fourier expansion 
\begin{eqnarray}\label{eguFouri}
e(b_2 g(u))=\sum_{m\in \Bbb Z} a(m)e(mu),
\end{eqnarray}
where
\begin{eqnarray}\label{am/FoCOEF}
a(m)=\int_0^1 e(b_2 g(u)) e(-mu)du.  
\end{eqnarray}
Note that  $a(m)$ depends on $b_2$ as 
well as the constant $B$ in (\ref{Q>Q<}).   
The series (\ref{eguFouri}) converges absolutely and uniformly in $u\in \Bbb R$, and hence 
\begin{eqnarray}\label{2N/Cont2}
S(N)
&\leq& \bigg|\sum_{n\leq N}\mu(n)e\{b_2 H(n)+P(n)\}
\sum_{m\in \Bbb Z} a(m) e(mx_1+mn\a)\bigg|\nonumber\\
&\leq& \sum_{m\in \Bbb Z} |a(m)|\bigg|\sum_{n\leq N}\mu(n) 
e\{b_2 H(n)+P(n)+mn\a\}\bigg|\nonumber\\ 
&\ll& \sup_{\a, m}\bigg|\sum_{n\leq N}\mu(n) e\{b_2 H(n)+P(n)+mn\a\}\bigg|, 
\end{eqnarray} 
where the implied constant depends on $b_2$ and the constant $B$ in (\ref{Q>Q<}).  
The polynomial $P(n)+mn\a$ is harmless, but the complexity 
comes from $H(n)$ which we deal with in the following subsections. 

\subsection{Theorem~\ref{thm2} with $\a$ irrational.}     
To estimate the right-hand side of (\ref{2N/Cont2}), we rewrite 
the function $H$ in (\ref{Phix=}) as   
\begin{eqnarray*}
H(n)=\sum_{q\in {\mathcal Q}^{\sharp}}F(n;q)
\end{eqnarray*}
where 
\begin{eqnarray}\label{def/Gq=}
F(n;q)=\sum_{q\leq |m|<q^+ \atop q|m} 
\hat{h}(m)e(mx_1) \f{e(nm\a)-1}{e(m\a)-1}. 
\end{eqnarray}
We want to truncate 
$H(n)$ at $Y$, where $Y$ is to be decided a little later.  
Application of (\ref{hhat/UPP}) gives
$$
\hat{h}(m)e(mx_1) \f{e(nm\a)-1}{e(m\a)-1}
\ll e^{-\tau |m|}N, 
$$ 
and therefore 
\begin{eqnarray}\label{trun/Phi}
H(n)
=\sum_{q\in {\mathcal Q}^{\sharp}\atop q\leq Y} F(n;q) 
+O(e^{-\tau Y}N)
=: F(n) +O(e^{-\tau Y}N)  
\end{eqnarray} 
with the implied constants depending on $\tau$. 
If we set  
\begin{eqnarray}\label{DEF/Y=}
Y= \f{8}{\tau}\log N,  
\end{eqnarray}
then the last $O$-term in (\ref{trun/Phi}) is $\ll N^{-7}$,  
and hence (\ref{2N/Cont2}) becomes
\begin{eqnarray}\label{N/Cont2/+}
S(N)
\ll 1 +\sup_{\a, m}|T(N)|, 
\end{eqnarray}
where we should remember the implied constant depends only on $b_2, \tau,$ 
and $B$, 
and where we have written 
\begin{eqnarray}\label{DEF/Tsum}
T(N)
= \sum_{n\leq N}\mu(n) e\{b_2 F(n)+P(n)+mn\a\}. 
\end{eqnarray}
Thus the estimation of $S(N)$ reduces to that of $T(N)$. 

Further analysis on $F(n)$ is necessary. 
Recall that $q^+>q^B$ for any $q\in {\mathcal Q}^{\sharp}. $ Also for 
any $q\in {\mathcal Q}^{\sharp}$,  
we have by (\ref{ratAPP}) that     
\begin{eqnarray*}
\f{|m|}{2qq^+}<\bigg|m\a-\f{ml}{q}\bigg|< \f{|m|}{qq^+}. 
\end{eqnarray*}
If it happens that $q|m$, we change variables $m=qm'$ so that the 
above becomes 
\begin{eqnarray}\label{mqqa<}
\f{|m'|}{2q^+}<\|m'q\a\|< \f{|m'|}{q^+}. 
\end{eqnarray}
For further analysis we write 
\begin{eqnarray*}\label{}
{\mathcal Q}^{\sharp}=\{m_1, m_2, \ldots\}. 
\end{eqnarray*}
Recall that by definition $m_1\geq 2$. Noting that 
\begin{eqnarray}\label{m1m1Bm1+}
m_1<m_1^B\leq m_1^+ 
\leq m_2<m_2^B\leq m_2^+\leq \ldots,   
\end{eqnarray}
we deduce that  
$m_2^+ > m_2^B > (m_1^B)^B = m_1^{B^2}, $ 
and consequently  
\begin{eqnarray}\label{mj+>}
m_j^+ > m_1^{B^j}  
\end{eqnarray}
for all $j\geq 1.$ The sequence (\ref{m1m1Bm1+}) should also 
be truncated at $Y$. Since $m_j\to \infty$ as $j\to\infty$, there exists  
a positive integer $J$ such that    
\begin{eqnarray}\label{mJY<mJ+1}
m_J\leq Y<m_{J+1}. 
\end{eqnarray}
From this and (\ref{mj+>}), we can bound $J$ from above as  
\begin{eqnarray}\label{J<loglog}
J \leq  \f{\log \f{\log Y}{\log m_1}}{\log B}+1 
\ll  \f{\log \log \log N}{\log B} 
\end{eqnarray}
where we have used the definition of $Y$ in (\ref{DEF/Y=}) and therefore 
the implied constant depends on $\tau$. 
If we write $q=m_j$ in (\ref{mqqa<}) 
and change variables as $m=m' m_j, $ 
then 
\begin{eqnarray}\label{Fnmj}
F_j(n):=F(n;m_j)=\sum_{1\leq |m'|< M_j}
\hat{h}(m_j m') e(m_jm' x_1)\f{e(nm' m_j \a)-1}{e(m' m_j\a)-1}  
\end{eqnarray}
where $M_j:=m_j^+/m_j$ for $j=1,\ldots, J-1$, but 
\begin{eqnarray}\label{def/MJ}
M_J:= Y/m_J. 
\end{eqnarray}
In (\ref{Fnmj}) we have 
\begin{eqnarray}\label{ine/m+/}
\f{|m'|}{2 m_j^+}< \|m' m_j\a\|< \f{|m'|}{m_j^+},  
\end{eqnarray}
and if we write $\th_j=\|m_j\a\|$ then the above 
with $m'=1$ gives 
\begin{eqnarray}\label{ine/m'}
\f{1}{2m_j^+} <\th_j < \f{1}{m_j^+} 
\end{eqnarray}
for all $j\geq 1$. Hence (\ref{Fnmj}) can be written as 
\begin{eqnarray}\label{Fjn=fjnth}
F_j(n)= f_j(n\th_j), 
\end{eqnarray}
with 
\begin{eqnarray}\label{Fnmj/C2}
f_j(x) =\sum_{1\leq |m|< M_j}
\hat{h}(m_j m) e(m_jm x_1)\f{e(x m)-1}{e(m \th_j)-1}, \quad x\in [\th_j, \th_j N].  
\end{eqnarray}
We conclude that the function $F(n)$ in (\ref{DEF/Tsum}) 
is of the form 
\begin{eqnarray}\label{F=f1+f2+}
F(n)=f_1(n\th_1)+\cdots f_J(n\th_J). 
\end{eqnarray}
This is the expression from which we start to handle 
the factor $e(b_2 F(n))$ in (\ref{DEF/Tsum}).  

With $f_j$ as in (\ref{Fnmj/C2}) we set   
\begin{eqnarray}\label{||fj||=}
\Phi_j = \sum_{1\leq |m|< M_j} |m|^2 |\hat{h}(m_j m)|.  
\end{eqnarray}
Let $C>0$ be a large constant to be specified in \S6.  
We need to consider three possibilities separately:   
\begin{itemize}
\item[(A)]  $m_J^+ \Phi_J \leq \log^{4C} N; $ 
\item[(B)]  $(m_J^+)^3 \geq \Phi_J N^4 \log^C N$;   
\item[(C)]  $m_J^+ \Phi_J > \log^{4C} N$ and $(m_J^+)^3 < \Phi_J N^4 \log^C N$.     
\end{itemize}

\noindent 
In cases (A) and (B), the factor $e(b_2 F(n))$ will be handled by Fourier analysis 
and Lemma~\ref{lem:VDH}, while 
in case (C) by a finite version of the Vinogradov method 
(Bourgain-Sarnak-Ziegler \cite{BouSarZie}), as well as Poisson summation and 
stationary phase. 

\subsection{Theorem~\ref{thm2} with $\a$ irrational: case (A)}  
In this subsection we prove the following proposition. 

\begin{proposition}\label{prop:caseA} 
Let $S(N)$ be as in (\ref{T2/S=See}), and $h$ an analytic function 
whose Fourier coefficients satisfy the upper bound condition (\ref{hhat/UPP}). 
Assume condition (A). 
Then  
\begin{eqnarray}\label{SN<</fin/A}
S(N)\ll N (\log N)^{8C+5-A},  
\end{eqnarray}
where $A>0$ is arbitrary, and 
the implied constant depends on $A, \tau,$ and $b_2$, but uniform in all the 
other parameters.      
\end{proposition}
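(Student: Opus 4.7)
The plan is to reduce $T(N)$ from (\ref{DEF/Tsum}) to sums handled by Lemma~\ref{lem:VDH} via Fourier expansion of the factor $e\{b_2 F(n)\} = \prod_j e\{b_2 f_j(n\th_j)\}$. Hypothesis (A) will enter only through the $\ell^1$ norm of the Fourier coefficients of $e\{b_2 f_J\}$, while the analyticity assumption (\ref{hhat/UPP}) will handle the remaining indices $j<J$.

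For each $j\in\{1,\ldots,J\}$, the function $f_j$ in (\ref{Fnmj/C2}) is a trigonometric polynomial in $x$ of period $1$, so one has the Fourier expansion $e\{b_2 f_j(x)\}=\sum_{k\in\Z}a_k^{(j)} e(kx)$. The lower bound $|e(m\th_j)-1|\gg |m|/m_j^+$ that follows from (\ref{ine/m+/}) together with term-by-term differentiation of $f_j$ gives $\|f_j'\|_\infty,\,\|f_j''\|_\infty \ll m_j^+\Phi_j$, and two integrations by parts in the Fourier integral defining $a_k^{(j)}$ then yield $|a_k^{(j)}|\ll (1+(m_j^+\Phi_j)^2)/(1+k^2)$, so that
\[
\sum_{k\in\Z}|a_k^{(j)}|\ll 1+(m_j^+\Phi_j)^2.
\]
Under hypothesis (A), $\sum_k|a_k^{(J)}|\ll \log^{8C} N$. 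For $j<J$, (\ref{hhat/UPP}) together with $m_j\geq 2$ forces $\Phi_j$ to decay at least like $e^{-\tau m_j}$; combining this with the doubly exponential growth (\ref{mj+>}) of $m_j^+$, the crude upper bound $m_j^+\leq Y\ll \log N$ from (\ref{mJY<mJ+1})--(\ref{DEF/Y=}), and the bound $J\ll\log\log\log N$ from (\ref{J<loglog}), and taking $B$ in (\ref{Q>Q<}) sufficiently large depending on $\tau$, one may show $\prod_{j=1}^{J}\sum_{k}|a_k^{(j)}|\ll (\log N)^{8C+O(1)}$.

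Substituting the Fourier expansions into (\ref{DEF/Tsum}) transforms $T(N)$ into
\[
\sum_{k_1,\ldots,k_J\in\Z}\bigg(\prod_{j=1}^J a_{k_j}^{(j)}\bigg)\sum_{n\leq N}\mu(n)\,e\bigg\{P(n)+\bigg(m\a+\sum_{j=1}^J k_j\th_j\bigg)n\bigg\}.
\]
The polynomial in $n$ appearing in the inner exponential has degree at most $2$, so Lemma~\ref{lem:VDH} bounds that sum by $N/\log^A N$ uniformly in all its coefficients. Multiplying by the product of $\ell^1$ norms and inserting into (\ref{N/Cont2/+}) yields the claimed bound (\ref{SN<</fin/A}).

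The principal obstacle is the uniform bookkeeping in the middle step: the potentially small $m_j$ for $j$ near $1$ could make individual factors $1+(m_j^+\Phi_j)^2$ as large as $(\log N)^2$, so one must exploit the analyticity of $h$ together with the rapid growth built into the partition (\ref{Q>Q<}) and the smallness of $J$ in (\ref{J<loglog}) to show that the accumulated loss is only a fixed power of $\log N$. Mere smoothness of $h$ would not suffice here; the exponential decay of $\hat h$ is needed to beat the polynomial factor $m_j^+\leq Y$ for all $j\leq J$.
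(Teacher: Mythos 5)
Your overall framework matches the paper exactly: expand each $e\{b_2 f_j(n\theta_j)\}$ in a Fourier series, bound the coefficients via two integrations by parts using $\|f_j'\|_\infty,\|f_j''\|_\infty\ll m_j^+\Phi_j$, conclude $\sum_k|a_k^{(j)}|\ll(1+m_j^+\Phi_j)^2$, isolate the $j=J$ factor via hypothesis~(A), and dispatch the inner twisted sum with Lemma~\ref{lem:VDH}. This is precisely the paper's route. The gap is in the one step you defer to ``one may show.''

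Your recipe for bounding $\prod_{j<J}(1+m_j^+\Phi_j)^2$ — combine $\Phi_j\ll e^{-\tau m_j}$ with the crude bound $m_j^+\le Y$ and the smallness of $J$ — does not yield a fixed power of $\log N$. A factor is non-negligible precisely when $m_j\lesssim\tau^{-1}\log Y\asymp\log\log N$; by the doubly exponential growth $m_j\ge 2^{B^{j-1}}$, the number of such indices is of order $\log\log\log\log N$, which tends to infinity. Each such index contributes up to $(\log N)^2$, so this route gives only $(\log N)^{O(\log\log\log\log N)}$. Enlarging $B$ does not rescue this: for any \emph{fixed} $B$ and $N$ sufficiently large, $\log\log N>2^B$, so the $j=2$ factor (say) can still be of size $\log N$. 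Your closing remark that ``the exponential decay of $\hat h$ is needed to beat the polynomial factor $m_j^+\le Y$'' reflects this misdiagnosis.

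The paper's actual mechanism is multiplicative telescoping, not exponential cancellation. Since $m_j^+\le m_{j+1}$ and $m_{j+1}^B\le m_{j+1}^+$, one has $(m_j^+)^B\le m_{j+1}^+$, hence by induction $(m_j^+)^{B^{J-j-1}}\le m_{J-1}^+$, and therefore
\begin{eqnarray*}
m_1^+\cdots m_{J-1}^+\;\le\;(m_{J-1}^+)^{\sum_{i\ge 0}B^{-i}}\;\le\;(m_{J-1}^+)^2\;\le\;m_J^2\;\le\;Y^2.
\end{eqnarray*}
Coupled only with the \emph{uniform} bound $\Phi_j\le K$ (which uses nothing beyond $m_j\ge 2$ and, say, $\hat h(m)\ll|m|^{-5}$), this yields $\prod_{j<J}(1+m_j^+\Phi_j)^2\le(2K)^{2(J-1)}Y^4$, and the harmless $(2K)^{2J}$ is absorbed into $m_{J-1}^+\le Y$ by choosing $B$ large relative to $K$ and $b_2$. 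Thus the exponent $8C+5$ emerges from $Y^4\cdot Y\cdot(\log N)^{8C}$ with no appeal to the decay rate of $\Phi_j$; analyticity is used elsewhere in the argument (in truncating $H(n)$ at $Y\asymp\log N$ in (\ref{trun/Phi})), not in controlling this product. You should replace your heuristic with this telescoping inequality to close the gap.
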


We remark that the lower bound condition (\ref{hhat/LOW}) is not needed in 
Proposition~\ref{prop:caseA}.  

\begin{proof} It suffices to bound  
$T(N)$ defined as in (\ref{DEF/Tsum}) under the condition (A).  
Our analysis starts from $f_1$. Recall that 
\begin{eqnarray}\label{Fnmj/2}
f_1(x)=\sum_{1\leq |m| \leq M_1}
\hat{h}(m_1 m) e(m_1 m x_1)\f{e(xm)-1}{e(m \th_1)-1}, 
\qquad x\in [\th_1,\th_1 N]. 
\end{eqnarray}
It is easy to compute the first and second derivatives of $f_1$, that is 
\begin{eqnarray*}
f_1'(x)
&=& 2\pi i   
\sum_{1\leq |m| < M_1} m \hat{h}(m m_1) e(m m_1 x_1) 
\f{e(x m)}{e(m\th_1)-1},  
\qquad x\in [\th_1,\th_1 N],   
\end{eqnarray*} 
and 
\begin{eqnarray*}
f_1''(x)&=& (2\pi i  )^2  
\sum_{1\leq |m| < M_1} m^2 \hat{h}(m m_1) e(m m_1 x_1) 
\f{e(x m)}{e(m\th_1)-1}, 
\qquad x\in [\th_1,\th_1 N]. 
\end{eqnarray*} 
Trivially we have 
\begin{eqnarray*}
|f_1'(x)|\leq  \f{\pi}{2\th_1}\sum_{1\leq |m| < M_1} |\hat{h}(m m_1)|
\leq \f{\pi\Phi_1}{2\th_1}, 
\quad |f_1''(x)|\leq \f{\pi^2 \Phi_1}{\th_1}, 
 \end{eqnarray*} 
where the implied constants are absolute. 
Note that $e(b_2 f_1(x))$ is a smooth periodic function on $\Bbb R$,  
and hence can be expanded into 
Fourier series 
\begin{eqnarray}
e(b_2 f_1(x))=\sum_{k\in \Bbb Z} a(k)e(kx), 
\end{eqnarray}
where 
\begin{eqnarray}
a(k)=\int_0^1 e(b_2 f_1(x))e(-kx)dx. 
\end{eqnarray}
We must compute the dependence of $a(k)$ on $f_1$ and $b_2$.  
By partial integration we have 
\begin{eqnarray*}
a(k)
&=& -\f{1}{2\pi i k}\int_0^1 e(b_2 f_1(x))d e(-kx) \\ 
&=& \f{b_2}{k}\int_0^1 e(b_2 f_1(x))f_1'(x) e(-kx) dx\\ 
&=& \f{b_2}{2\pi i k^2}\int_0^1 \f{d\{e(b_2 f_1(x))f_1'(x)\}}{dx} e(-kx) dx. 
\end{eqnarray*}
Since 
\begin{eqnarray*}
\bigg|\f{d\{e(b_2 f_1(x))f_1'(x)\}}{dx}\bigg|
&=& |e(b_2 f_1(x)) \{f_1''(x)+ 2\pi i b_2 f_1'(x)f_1'(x)\}| \\ 
&\leq& \f{\pi^3}{2} \bigg(\f{\Phi_1}{\th_1}+ b_2\bigg(\f{\Phi_1}{\th_1}\bigg)^2\bigg),  
\end{eqnarray*}
we can bound $a(k)$ as follows 
\begin{eqnarray}\label{ak<<} 
|a(k)| 
\leq \f{\pi^2}{4}\bigg(\f{\Phi_1}{\th_1}+\bigg(\f{\Phi_1}{\th_1}\bigg)^2\bigg)
\f{b_2^2}{|k|^2}
\end{eqnarray}
for $k\not=0$. Obviously for $k=0$ we have $|a(0)|\leq 1$. It follows that 
\begin{eqnarray}\label{sum|ak|}
\sum_{k\in \Bbb Z} |a(k)|
&\leq& 1  + \f{\pi^2}{4} \bigg(\f{\Phi_1}{\th_1}+\bigg(\f{\Phi_1}{\th_1}\bigg)^2\bigg) 
\sum_{|k|\geq 1} \f{b_2^2}{|k|^2} \nonumber\\   
&\leq& (4b_2)^2 \bigg( 1 + \f{\Phi_1}{\th_1}\bigg)^2
\leq (8b_2)^2 (1 + m_1^+ \Phi_1)^2, 
\end{eqnarray}  
where in  the last step we have applied $\sum_{|k|\geq 1}|k|^{-2} < 4$ as well as 
(\ref{ine/m'}).  

Now we can remove the factor $e(b_2 f_1(n \th_1))$ from any sum of the form 
\begin{eqnarray*}
\sum_{n\leq N} \mu(n) e(b_2 f_1(n\th_1)+G(n)) 
\end{eqnarray*} 
where $G(n)$ is a function of $n$. 
Indeed, on inserting the Fourier expansion of $e(b_2 f_1(x))$, 
the above sum in absolute value can be written as 
\begin{eqnarray*}
&=& \bigg|\sum_{n\leq N} \mu(n) e(G(n)) 
\sum_{k_1\in \Bbb Z} a(k_1)e(n k_1 \th_1)\bigg| \\ 
&\leq& \sum_{k_1\in \Bbb Z} |a(k_1)| 
\bigg|\sum_{n\leq N} \mu(n) e(n k_1 \th_1+G(n))\bigg| \\ 
&\leq&
(8b_2)^2 (1 + m_1^+ \Phi_1)^2  
\sup_{k_1, \th_1}\bigg|\sum_{n\leq N} \mu(n) e(n k_1\th_1+G(n))\bigg|,   
\end{eqnarray*} 
by (\ref{sum|ak|}). 
In this way the factor $e(b_2 f_1(n \th_1))$ has been removed. 
Of course, the same argument applies to $e(b_2 f_2), \ldots, e(b_2 f_J)$, 
and hence (\ref{DEF/Tsum}) becomes 
\begin{eqnarray}\label{N/Cont3}
|T(N) | 
\leq \Sigma \Pi,  
\end{eqnarray}
where 
\begin{eqnarray}
\Sigma
= \sup \bigg|\sum_{n\leq N} \mu(n) 
e\{n(k_1 \th_1+\cdots + k_J \th_J)  +P(n) 
+mn\a\}\bigg|
\end{eqnarray}
with the $\sup$ taken over $\a, m, k_1, \ldots, k_J, \th_1, \ldots, \th_J$, 
and where 
\begin{eqnarray}\label{def/PI=}
\Pi= (8b_2)^{2J} \prod_{j=1}^J 
(1 + m_j^+ \Phi_j)^2.  
\end{eqnarray}
The sum $\Sigma$ above can be estimated by Lemma~\ref{lem:VDH}, 
\begin{eqnarray}\label{Sig<<Nlog-AN}
\Sigma \ll N\log^{-A}N, 
\end{eqnarray}
where the implied constant depends on $A$, but independent of 
all the other parameters. 

To estimate $\Pi$ we need to compute $m_1^+ \cdots m_{J-1}^+$. From  (\ref{m1m1Bm1+}) 
we deduce by induction that  
$(m_j^+)^{B^{J-j-1}} \leq m_{J-1}^+$ for $j=1,\ldots, J-1$, 
and therefore 
\begin{eqnarray*}
m_1^+ \cdots m_{J-1}^+ 
\leq (m_{J-1}^+)^{B^{-J+2}+B^{-J+3}+\cdots +B^0}
\leq (m_{J-1}^+)^2. 
\end{eqnarray*}
By definition there is a constant $K\geq 1$ 
depending on $\tau$ such that the inequality $\Phi_j\leq K$ holds for all $j$. Hence 
\begin{eqnarray*}
\prod_{j=1}^{J-1} (1+ m_j^+ \Phi_j)^2 
\leq (2K)^{2(J-1)} (m_1^+ \cdots m_{J-1}^+)^2  
\leq (2K)^{2(J-1)}  (m_{J-1}^+)^4, 
\end{eqnarray*} 
and this can be used to bound $\Pi$ as follows:  
\begin{eqnarray*}\label{OvAll/1}
\Pi 
&=& (8b_2)^{2J} (1+ m_J^+ \Phi_J)^2  
\prod_{j=1}^{J-1} (1+ m_j^+ \Phi_j)^2 \nonumber\\ 
&\leq& (16 b_2 K)^{2J} (1+ m_J^+ \Phi_J)^2 (m_{J-1}^+)^4.  
\end{eqnarray*} 
By (\ref{mj+>}) we have 
\begin{eqnarray*}\label{OvAll/2}
(16 b_2 K)^{2J}=m_1^{\f{2J\log (16 b_2K)}{\log m_1}}  
\leq m_1^{B^{J-1}} \leq m_{J-1}^+ 
\end{eqnarray*}
if $B$ is sufficiently large in terms of $K$ and $b_2$,  
that is in terms of $\tau$ and $b_2$. It turns out that 
for this purpose the choice $B=4[\log (16 b_2K)]$ is acceptable, where 
$[x]$ denotes the integral part of $x$.     
Note that $m_{J-1}^+\leq m_{J}\leq Y$ with $Y$ as in (\ref{DEF/Y=}).  
These together with condition (A) give  
\begin{eqnarray}\label{PI/<=/+}
\Pi 
\leq (1+ m_J^+ \Phi_J)^2 m_J^5 
\leq (1+ m_J^+ \Phi_J)^2 Y^5
\ll  (\log N)^{8C+5}
\end{eqnarray} 
with the implied constant depending on $\tau$ only.    
This is the desired upper bound for $\Pi$. 

Inserting (\ref{PI/<=/+}) and (\ref{Sig<<Nlog-AN}) back into (\ref{N/Cont3}), 
we get 
\begin{eqnarray*}\label{T/fin/A}
T(N) \ll N (\log N)^{8C+5-A},  
\end{eqnarray*}
where the implied constant depends only on $A$ and $\tau$.   
From this and  (\ref{N/Cont2/+}) we conclude that   
\begin{eqnarray*} 
S(N)
\ll 1 +N (\log N)^{8C+5-A}
\end{eqnarray*}
with the implied constant depends on $A, \tau$, and $b_2$, 
but uniform in all the other parameters.   
This completes the analysis of case (A).  
\end{proof} 
 
\section{Theorem~\ref{thm2} with $\a$ irrational: case (B)}  
\setcounter{equation}{0}
In this section we handle case (B). Still, the lower bound 
condition (\ref{hhat/LOW}) is not needed in Proposition~\ref{prop:caseB}.  

\begin{proposition}\label{prop:caseB} 
Let $S(N)$ be as in (\ref{T2/S=See}), and $h$ an analytic function 
whose Fourier coefficients satisfy the upper bound condition (\ref{hhat/UPP}). 
Assume condition (B). Then 
 \begin{eqnarray}\label{S/fin/B}
S(N)\ll N\log^{-A} N + N(\log N)^{6-C}    
\end{eqnarray}
where $A>0$ is arbitrary and the implied constant depends 
on $A, \tau$, and $b_2$ only.  
\end{proposition}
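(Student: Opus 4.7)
The plan is to exploit the strong diophantine condition of case (B). Combined with the negation of case (A) (i.e.\ $m_J^+\Phi_J>\log^{4C}N$), the hypothesis $(m_J^+)^3\geq \Phi_J N^4\log^C N$ forces $(m_J^+)^4\geq N^4\log^{5C}N$, so that $m_J^+\geq N\log^{5C/4}N$ and hence $N\th_J\leq (\log N)^{-5C/4}$ is very small. This smallness allows us to replace $f_J(n\th_J)$ by a polynomial in $n$ via Taylor expansion, while handling $f_1,\ldots,f_{J-1}$ by the Fourier-expansion method already developed in Proposition~\ref{prop:caseA}.

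For the Taylor step, fix $D\geq 0$ and expand each $e(km\th_J)$ to order $D$ in its argument. Summing over $0\leq k<n$ and then pairing with $\hat h(m_Jm)e(m_Jmx_1)$ over $|m|<M_J$ yields
\begin{equation*}
f_J(n\th_J)=Q_J(n)+E_J(n),
\end{equation*}
where $Q_J$ is a real polynomial in $n$ of degree $\leq D+1$ with coefficients built from $\th_J$, $m_J$, $x_1$, and the Fourier data of $h$. The analyticity assumption (\ref{hhat/UPP}) gives $\sum_m|m|^{D+1}|\hat h(m_Jm)|\ll (D+1)!/(\tau m_J)^{D+2}$, so
\begin{equation*}
|E_J(n)|\ll \f{N}{D+2}\left(\f{2\pi N\th_J}{\tau m_J}\right)^{D+1}\ll \f{N}{(\log N)^{(5C/4)(D+1)}}.
\end{equation*}
Choose $D$ as the least integer with $(\log N)^{(5C/4)(D+1)}\geq N(\log N)^{C-6}$; a brief computation shows that $D=O(\log N/\log\log N)$ suffices to achieve $|E_J(n)|\ll (\log N)^{6-C}$.

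Writing $e(b_2f_J(n\th_J))=e(b_2Q_J(n))+O(|E_J(n)|)$, the contribution of the $O(|E_J|)$ term to $|T(N)|$ defined in (\ref{DEF/Tsum}) is bounded by $\sum_{n\leq N}|E_J(n)|\ll N(\log N)^{6-C}$. The remaining main piece is $\sum_n\mu(n)\,e\bigl(b_2\sum_{j<J}f_j(n\th_j)+b_2Q_J(n)+P(n)+mn\a\bigr)$. Applying the Fourier-expansion removal procedure of (\ref{sum|ak|})-(\ref{N/Cont3}) to each $e(b_2f_j(n\th_j))$ with $j<J$ introduces, exactly as in the calculation leading to (\ref{PI/<=/+}) but with the $j=J$ factor omitted, the prefactor
\begin{equation*}
\prod_{j<J}(1+m_j^+\Phi_j)^2\ll (2K)^{2(J-1)}(m_{J-1}^+)^4\ll \log^4 N,
\end{equation*}
and reduces the inner sum to a supremum $\sup|\sum_{n\leq N}\mu(n)e(\varphi(n))|$ over real polynomials $\varphi$ of degree $\leq D+1$. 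Lemma~\ref{lem:VDH} bounds this supremum by $N\log^{-A}N$. Combining the pieces yields $|T(N)|\ll N(\log N)^{4-A}+N(\log N)^{6-C}$, and on substituting $A+4$ for $A$ and invoking (\ref{N/Cont2/+}) we obtain the stated bound for $S(N)$.

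The main obstacle is that the polynomial $\varphi$ has degree $D+1=O(\log N/\log\log N)$ which grows (slowly) with $N$, so Lemma~\ref{lem:VDH} must be applied with a growing degree. Fortunately the underlying Davenport--Hua argument via Vaughan's identity and Vinogradov's method yields an implied constant depending only polynomially on the degree $d$, which is readily absorbed into the logarithmic saving. Care in tracking this dependence, and in verifying that the $(D+1)!$ factor produced by the analyticity bound on $\sum_m|m|^{D+1}|\hat h(m_Jm)|$ does not outweigh the gain $(N\th_J/\tau m_J)^{D+1}$, is the only delicate point in the argument.
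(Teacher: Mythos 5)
Your overall scheme (Taylor-expand $f_J$, apply the Fourier-removal machinery to $f_1,\dots,f_{J-1}$, then invoke Lemma~\ref{lem:VDH}) is the right shape, but two steps do not hold up.

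\textbf{Growing degree in Lemma~\ref{lem:VDH}.} You truncate the Taylor expansion at order $D=O(\log N/\log\log N)$, so the resulting phase $\varphi(n)$ has degree $D+1$ that grows with $N$, and you then apply Lemma~\ref{lem:VDH} to this polynomial. The lemma is stated for a fixed degree $d$, with an implied constant that is allowed to depend on $A$ (and implicitly on $d$). Your justification --- that Vaughan's identity and Vinogradov's method give an implied constant ``depending only polynomially on the degree $d$'' --- is not a known feature of these methods and is almost certainly false in the form you invoke it: Weyl/van der Corput differencing loses an exponential factor in $d$, and even the Vinogradov mean value route produces savings in $\log N$ that deteriorate rapidly as $d$ grows. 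Without an explicit, uniform-in-$d$ version of (\ref{set/VDH}), this step of your argument is a genuine gap, not merely ``a delicate point.'' The paper avoids the issue entirely: it truncates the Taylor expansion at the \emph{fixed} order $2$, so that $f_J(n\th_J)=Q(n)+O(Y\Phi_J\th_J^3N^4)$ with $Q$ a cubic polynomial, and observes that condition~(B), together with $\th_J<1/m_J^+$, makes the error $O(\log^{1-C}N)$ per term. Lemma~\ref{lem:VDH} is then applied only with $d\le3$ and no uniformity-in-degree issue arises. Indeed, case (B) is \emph{defined} so that a fixed-order (order-two) Taylor truncation suffices; your choice to push the order up is unnecessary and creates the problem.

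\textbf{Use of the negation of case (A).} You use $m_J^+\Phi_J>\log^{4C}N$ (the negation of (A)) to deduce $m_J^+\ge N\log^{5C/4}N$ and hence $N\th_J\le(\log N)^{-5C/4}$. But Proposition~\ref{prop:caseB} is stated under hypothesis (B) alone; the negation of (A) is not available. If you only want to prove Theorem~\ref{thm2} you can of course combine the cases, but as written your argument does not establish the proposition as stated, whereas the paper's order-two truncation uses only (B) (together with $\th_J<1/m_J^+$) and needs no further input. A small secondary remark: your bound $\prod_{j<J}(1+m_j^+\Phi_j)^2\ll\log^4N$ drops the $(16b_2K)^{2J}$ factor; following the paper's estimate $(16b_2K)^{2J}\le m_{J-1}^+$ gives $\Pi^*\le Y^5\ll\log^5 N$, which is what the final exponent $6-C$ tracks.
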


\begin{proof} It is sufficient to estimate 
$T(N)$ defined as in (\ref{DEF/Tsum}) under the condition (B).  
We can repeat the argument in case (A) but with $J$ there 
replaced by $J-1$. Thus in (\ref{DEF/Tsum}) the factors 
$e(b_2 f_1), e(b_2 f_2), \ldots, e(b_2 f_{J-1})$  
can be removed by repeated application of Fourier analysis, 
but the factor $e(b_2 f_{J})$ 
remains in the summation in (\ref{DEF/Tsum}).  
Hence, instead of (\ref{N/Cont3}), we 
have in the present situation,  
\begin{eqnarray}\label{T/S*P*}
|T(N) | 
\leq \Sigma^* \Pi^*  
\end{eqnarray}
with 
\begin{eqnarray}\label{Sig*/B}
\Sigma^* 
= \sup \bigg|\sum_{n\leq N}\mu(n) 
e\{n(k_1 \th_1+\cdots + k_{J-1} \th_{J-1}) + b_2 f_{J}(n\th_J)+P(n)
+mn\a\}\bigg|, 
\end{eqnarray}
where the $\sup$ is taken over $\a, m, k_1, \ldots, k_{J-1}, 
\th_1, \ldots, \th_{J-1}$.   
Also similar to (\ref{def/PI=}),  
\begin{eqnarray}\label{P*/B}
\Pi^*= (8b_2)^{2(J-1)} \prod_{j=1}^{J-1} 
(1+ m_j^+ \Phi_j)^2, 
\end{eqnarray}
where we note that $\Pi^*$ does not have 
any factor involving the subscript $J$. Similar to  
(\ref{PI/<=/+}), we have  
\begin{eqnarray}\label{P*<<L:5}
\Pi^* 
\leq m_J^5 \leq Y^5   
\end{eqnarray}
provided that $B$ is sufficiently large in terms of $\tau$ and $b_2$. 

The estimation of $\Sigma^*$ requires more detailed analysis. We should take 
advantage of the fact that now $\th_J$ is very small. We write $f_J$ in (\ref{Fnmj/C2}) 
in the form 
\begin{eqnarray}\label{Rec/fJnth}
f_J(n\th_J)
=\sum_{1\leq |m|< M_J}
\hat{h}(m_J m) e(m_Jm x_1)
\sum_{j=0}^{n-1} e(jm\th_J),    
\end{eqnarray} 
where recall that $M_J=Y/m_J$ by (\ref{def/MJ}).  
For $j\geq 1$ Taylor's expansion gives 
\begin{eqnarray*}
e(j m \th_J)= \sum_{k=0}^2\f{(2\pi i j m\th_J)^k}{k!}   
+O(j^3 |m|^3 \th_J^3),    
\end{eqnarray*} 
and therefore 
\begin{eqnarray*}
\sum_{j=0}^{n-1} e(j m \th_J)
=1+\sum_{k=0}^2\f{(2\pi i m\th_J)^k}{k!}\sum_{j=1}^{n-1}j^k
+O(|m|^3 \th_J^3N^4). 
\end{eqnarray*}  
Hence  (\ref{Rec/fJnth}) takes the new form  
\begin{eqnarray}\label{fJ=C1C2C3}
f_J(n\th_J)
&=& c_0(M_J) n+ 
\f12 c_1(M_J)\th_J n(n-1) \nonumber\\  
&&+ \f16 c_2 (M_J) \th_J^2 (n-1)n(2n-1) + O\{\widetilde{c}_3(M_J) \th_J^3 N^4\},  
\end{eqnarray}
where 
\begin{eqnarray*}\label{def/C1}
c_k(M) = \f{(2\pi i )^{k}}{k!} \sum_{1\leq |m|< M}m^{k}
\hat{h}(m_J m) e(m_Jm x_1) 
\end{eqnarray*} 
for $k=0, 1, 2$,  while  
\begin{eqnarray*}\label{def/C1}
\widetilde{c}_3(M) = \sum_{1\leq |m|< M} |m|^{3}
|\hat{h}(m_J m)|.   
\end{eqnarray*}  
Obviously $\widetilde{c}_3(M_J)\leq Y\Phi_J$.  Put 
$c_k=c_k(\infty)$ for $k=0, 1, 2$. Then by the upper bound condition (\ref{hhat/UPP}),  
\begin{eqnarray*}\label{def/C1}
|c_k - c_k(M_J)| 
&\leq&  \sum_{|m|\geq M_J}|m|^{k} |\hat{h}(m_J m)| 
\ll \sum_{m\geq M_J} m^{k} e^{-\tau m_J m}  \nonumber\\  
&\ll& e^{-\f12 \tau Y} \ll N^{-4},  
\end{eqnarray*} 
and therefore 
$c_k(M_J)=c_k+O(N^{-4})$ for $k=0, 1, 2$.   
Collecting these estimates back to (\ref{fJ=C1C2C3}), we have  
\begin{eqnarray*}\label{fJ=Qn}
b_2 f_J(n\th_J)= b_2 Q(n) + O(|b_2| N^{-1}) + O(|b_2| Y \Phi_J \th_J^3 N^4)
\end{eqnarray*} 
with 
\begin{eqnarray*}\label{def/Qn=}
Q(n)= c_0 n+ 
\f12 c_1 \th_J n(n-1)+\f16 c_2\th_J^2 (n-1)n(2n-1). 
\end{eqnarray*}
Inserting these back into (\ref{Sig*/B}) yields
\begin{eqnarray*}
\Sigma^* 
&\ll& \sup \bigg|\sum_{n\leq N}\mu(n) 
e\{n(k_1 \th_1+\cdots + k_{J-1} \th_{J-1}) + b_2 Q(n) +P(n)
+mn\a\}\bigg| \\ 
&& +O(|b_2|) +O(|b_2| Y \Phi_J \th_J^3 N^5),   
\end{eqnarray*}
where the $\sup$ is taken over $\a, m, k_1, \ldots, k_{J-1}, 
\th_1, \ldots, \th_{J-1}$.  

The condition (B) is designed to control the last $O$-term, which  
is $\ll N(\log N)^{1-C}$ with the implied constant depending on $\tau$ and $b_2$ only. 
Applying Lemma~\ref{lem:VDH} again to the  above sum over $n$,  we get 
\begin{eqnarray*}
\Sigma^* 
\ll N\log^{-A} N + N(\log N)^{1-C}    
\end{eqnarray*}
where $A>0$ is arbitrary and the implied constant depends 
on $A, \tau$, and $b_2$ only.  
The desired result now follows from this and (\ref{P*<<L:5}).   
\end{proof} 

\section{Theorem~\ref{thm2} with $\a$ irrational: case (C)}  
\setcounter{equation}{0}

\subsection{The result and the idea of proof.} In this section we treat  
case (C) by establishing the following result. 
 
\begin{proposition}\label{prop:caseC} 
Let $S(N)$ be as in (\ref{T2/S=See}), and  $h$ an analytic function whose Fourier 
coefficients satisfying both the upper bound condition (\ref{hhat/UPP}) and the 
lower bound condition  (\ref{hhat/LOW}). Assume condition (C). Then 
\begin{eqnarray}\label{SN=oN/CC}
S(N)=o(N).  
\end{eqnarray}
\end{proposition}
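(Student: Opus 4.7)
The approach I would take is to apply the finite Vinogradov-type criterion of Bourgain--Sarnak--Ziegler (Lemma~\ref{lem:BSZ}) to the observable in (\ref{DEF/Tsum}), since in case (C) the single $f_J$ contribution is too oscillatory to be tamed by the Fourier-stripping argument of Proposition~\ref{prop:caseA}, yet not so large that a bilinear attack loses all hope. The smaller factors $e(b_2 f_j(n\th_j))$ for $j<J$ can still be removed exactly as in \S4, and the resulting polylogarithmic cost is absorbed provided $C$ is chosen sufficiently large. After this reduction the task is to prove
\[
\sum_{n\le N} \mu(n)\, e\bigl(b_2 f_J(n\th_J) + R(n)\bigr) = o(N),
\]
where $R(n)$ is a real polynomial in $n$ of degree at most $2$ whose coefficients come from $P(n)$ of (\ref{def/Pnax}), the linear correction $mn\a$, and the residual Fourier frequencies $\sum_{j<J} k_j \th_j$ picked up in the stripping step.

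To apply the BSZ criterion, I would take a prime window $(P_0, P_1]$ with $P_1$ a slowly growing function of $\log N$ and reduce matters to bounding, on average over distinct primes $p\ne q$ in this window, the Type~II sum
\[
B(p,q) := \sum_{n\le N/(pq)} e\bigl(b_2[f_J(pn\th_J) - f_J(qn\th_J)] + R(pn) - R(qn)\bigr).
\]
The polynomial piece $R(pn)-R(qn)$ is innocuous, so the essential task is to extract cancellation from the difference of two $p,q$-rescalings of the non-polynomial function $f_J$. The natural tool is Poisson summation in $n$, converting $B(p,q)$ into a series of oscillatory integrals
\[
I_k = \int e\bigl(b_2[f_J(px\th_J) - f_J(qx\th_J)] + R(px) - R(qx) - kx\bigr)\, dx, \qquad k\in \Z,
\]
where only $|k|\ll b_2\,\th_J(p+q)\,\|f_J'\|_\infty$ contribute non-negligibly. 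The integrals $I_k$ are then attacked by stationary phase: the first derivative of the phase is proportional to $p f_J'(px\th_J) - q f_J'(qx\th_J)$, and the second derivative to $\th_J (p^2 f_J''(px\th_J) - q^2 f_J''(qx\th_J))$.

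The quantitative lower bound (\ref{hhat/LOW}) enters decisively here: it guarantees that the dominant Fourier coefficient $\hat h(m_J)$ of $f_J''$ is of the expected order, producing a uniform lower bound of order $(m_J^+)^2 \Phi_J$ for $|f_J''|$ on most of the range, with the small exceptional set where stationarity degenerates handled trivially. One then obtains the standard stationary phase gain of roughly $(b_2\,\th_J\,(m_J^+)^2\Phi_J)^{-1/2}$ per integral $I_k$. The first inequality in condition (C), $m_J^+\Phi_J>\log^{4C}N$, forces this gain to be at least a power of $\log N$ below the trivial bound; the second inequality, $(m_J^+)^3<\Phi_J N^4\log^C N$, ensures that the total number of non-negligible frequencies $k$, combined with the ambient length $N/(pq)$, still yields $B(p,q)=o(N/(pq))$ on average over prime pairs, which is what BSZ requires.

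The main obstacle will be making the stationary phase analysis genuinely uniform in $p$ and $q$: one has to locate the critical points of the two-parameter phase $b_2[f_J(px\th_J)-f_J(qx\th_J)]-kx$ cleanly, treat the regime where $p$ and $q$ are close (so that the two copies of $f_J$ nearly cancel and the second derivative may temporarily collapse), and exclude degenerate critical points where $f_J''$ vanishes. The lower bound (\ref{hhat/LOW}) on $\hat h(m)$ is precisely what rules out accidental cancellation among the harmonics contributing to $f_J''$; this is why that hypothesis is indispensable in case (C) despite playing no role in cases (A) or (B), and it is also why no rate of convergence is produced by the argument.
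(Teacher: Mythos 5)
Your overall strategy matches the paper at the structural level: apply BSZ to reduce to bilinear sums over distinct integers, strip the $j<J$ factors by Fourier analysis, and use the lower-bound hypothesis (\ref{hhat/LOW}) to get a nondegeneracy estimate for the remaining $f_J$ phase, which is then attacked by Poisson summation/stationary phase (equivalently a van der Corput derivative test, which is what the paper invokes as Lemma~\ref{lem:vdCorput}). However, there is a genuine gap in your plan: you base the stationary-phase analysis on the \emph{second} derivative of the phase, writing it as $\theta_J(p^2 f_J''(px\theta_J) - q^2 f_J''(qx\theta_J))$ and omitting the polynomial contribution. But $R(px)-R(qx)$ contains a genuine degree-$2$ polynomial (from $P$ in (\ref{def/Pnax}), whose quadratic coefficient involves $\alpha$ and $c$), and that term contributes a nonzero constant to the second derivative. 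Since the supremum is taken over $\alpha$, $m$, and the stripped frequencies $k_j\theta_j$, this constant can be of arbitrary sign and size; it can partially or fully cancel the $f_J''$ contribution, collapsing your lower bound and making the stationary-phase gain evaporate exactly where you need it. The paper sidesteps this by differentiating the phase \emph{three} times (see (\ref{E(3)x=f}) and the surrounding discussion: ``If we take the third derivative\ldots then all the quadratic and linear terms in $E(x)$ will be killed''), so the only surviving term is $b_2\widetilde{f}_J^{(3)}(x\theta_J)\theta_J^3$, and the nondegeneracy estimate from the lower bound (\ref{hhat/LOW}) applies cleanly.

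A second, smaller issue: your claimed ``uniform lower bound of order $(m_J^+)^2\Phi_J$'' for $|f_J''|$ on most of the range is too optimistic. The mechanism in the paper (Lemma~\ref{lem: UniAwa} applied to the truncated trigonometric polynomial $\phi_D$) only delivers a lower bound with an additional small factor of the form $(\delta/3)^{2d_1 D}$ that degrades with the BSZ prime parameter $d_1$; tracking that factor, choosing $\delta$ and $C$ consistently (as in (\ref{del=})), and checking that the admissibility constraints (\ref{CON/d1}) and (\ref{Con/d/an}) do not cap $d_1$ are precisely where the two conditions in (C) are spent. You would also need to split according to whether $m_J^+\le N$ or $m_J^+>N$ (the paper's cases (C1) and (C2)), since the periodicity-of-$\widetilde{f}_J$ reduction to a fundamental interval only helps in the first regime. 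These are fixable, but as written the proposal would not close.
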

In view of (\ref{N/Cont2/+}), it is sufficient to establish (\ref{SN=oN/CC})  
for $T(N)$ with 
\begin{eqnarray*}
T(N)= \sum_{n\leq N}\mu(n) e\{b_2 F(n)+P(n)+mn\a\}  
\end{eqnarray*}
as in (\ref{DEF/Tsum}).  Here we recall that $P(n)$ 
is the polynomial of degree at most $2$ as in (\ref{def/Pnax}), and   
$F(n)=f_1(n\th_1)+\cdots f_J(n\th_J)$ with  
\begin{eqnarray*}
f_j(x) =\sum_{1\leq |m|< M_j}
\hat{h}(m_j m) e(m_jm x_1)\f{e(x m)-1}{e(m \th_j)-1}, \quad x\in [\th_j, \th_j N]   
\end{eqnarray*}
as in (\ref{F=f1+f2+})  and (\ref{Fnmj/C2}) respectively. 
The tool of our proof is the following result 
of Bourgain-Sarnak-Ziegler \cite{BouSarZie}. 

\begin{lemma}\label{lem:BSZ} 
Let $f: {\Bbb N}\to {\Bbb C}$ with $|f|\leq 1$ and let $\nu$ be a multiplicative
function with $|\nu|\leq 1.$ Let $\tau>0$ be a small parameter 
and assume that for all primes $p_1, p_2\leq e^{1/\tau}, p_1\not=p_2,$  
we have that for $M$ large enough
\begin{eqnarray}
\bigg|\sum _{m\leq M} f(p_1 m)\overline{f(p_2 m)}\bigg|\leq \tau M. 
\end{eqnarray}
Then for $N$ large enough
\begin{eqnarray}
\bigg|\sum _{n\leq N} \nu(n) f(n)\bigg|
\leq 2 \sqrt{\tau\log \f{1}{\tau}}N. 
\end{eqnarray}
\end{lemma}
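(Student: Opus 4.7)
I would prove this via the classical Vinogradov bilinear method in the clean form that exploits both the multiplicativity of $\nu$ and the quantitative Type II hypothesis. The idea is to use $\nu(pm)=\nu(p)\nu(m)$ (for $(p,m)=1$) to re-express $S:=\sum_{n\leq N}\nu(n) f(n)$ as a bilinear form in a prime variable $p$ and a complementary variable $m=n/p$, then apply Cauchy--Schwarz in $m$ so as to eliminate $\nu(m)$ and produce the sums $\sum_m f(p_1m)\overline{f(p_2m)}$ controlled by the hypothesis. Throughout, set $P:=e^{1/\tau}$ so that $\log P=1/\tau$; this is the threshold separating ``small'' primes (where multiplicativity will be used) from ``large'' primes.

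The plan splits into three steps. \emph{Step 1 (Mertens truncation):} the integers $n\leq N$ with no prime factor $\leq P$ number $O(N/\log P)=O(N\tau)$, so they contribute $O(N\tau)$ to $S$, well within the target. \emph{Step 2 (Bilinear identity):} for the remaining $n$, introduce the prime-weighted function $\theta_P(n):=\sum_{p\mid n,\,p\leq P,\,p^2\nmid n}\log p$ whose mean is $\log P=1/\tau$ by Mertens and whose Tur\'an--Kubilius variance satisfies $\sum_{n\leq N}(\theta_P(n)-\log P)^2\ll N\log P$. Consequently
\[
(\log P)\, S \;=\; \sum_{n\leq N}\nu(n) f(n)\,\theta_P(n) + O(N\sqrt{\log P}),
\]
and the first term on the right equals, up to $O(N)$ from $p\mid m$, the bilinear expression
\[
\sum_{p\leq P}\log p\cdot\nu(p)\sum_{\substack{m\leq N/p\\(m,p)=1}}\nu(m) f(pm).
\]
\emph{Step 3 (Cauchy--Schwarz + hypothesis):} switch the order of summation to put $m$ outside, apply Cauchy--Schwarz in $m$, and expand the resulting square into
\[
\ll N\sum_{p_1,p_2\leq P}\log p_1\log p_2\,\nu(p_1)\overline{\nu(p_2)}\sum_{m\leq N/\max(p_1,p_2)}f(p_1m)\overline{f(p_2m)}.
\]
The diagonal $p_1=p_2$ is bounded trivially using $\sum_p\log^2p/p\sim(\log P)^2/2$, and the off-diagonal is bounded via the hypothesis by $\tau N$ times a weighted prime sum handled by Chebyshev/Mertens asymptotics $\sum_p\log p\ll P$. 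Dividing by $\log P$ and combining with the Tur\'an--Kubilius error $N/\sqrt{\log P}=N\sqrt{\tau}$ yields $|S|\leq 2\sqrt{\tau\log(1/\tau)}\,N$.

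The main obstacle is the delicate calibration in Step 3. A direct unweighted Cauchy--Schwarz would produce off-diagonal terms of size $\pi(P)^2\tau M\sim \tau^3 P^2\cdot (N/P)$, which is catastrophic since $P=e^{1/\tau}$. The weight $\log p$ together with the natural range $m\leq N/p$ is precisely what makes the Chebyshev and Mertens asymptotics combine to prevent this blow-up, producing the correct final order $\sqrt{\tau\log(1/\tau)}N$. Extracting the explicit constant $2$ and the factor $\sqrt{\log(1/\tau)}$ requires careful bookkeeping of these prime-sum asymptotics together with the interplay between the Cauchy--Schwarz gain and the Tur\'an--Kubilius error; balancing these three contributions is what determines the final shape of the bound.
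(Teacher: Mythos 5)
The paper itself does not prove this lemma: it is quoted verbatim from Bourgain--Sarnak--Ziegler \cite{BouSarZie} (with earlier implicit versions credited to K\'atai \cite{Kat}), so the relevant comparison is with that argument, which is indeed of the K\'atai--Daboussi bilinear type you describe, but run with the unweighted count of prime divisors from suitably restricted sets of primes --- and it is exactly this calibration that your proposal gets wrong. The first fatal gap is in Step 2: the claimed Tur\'an--Kubilius bound $\sum_{n\le N}(\theta_P(n)-\log P)^2\ll N\log P$ is false. For the additive function $\theta_P(n)=\sum_{p\|n,\ p\le P}\log p$ the variance is $\asymp N\sum_{p\le P}(\log p)^2/p\asymp N(\log P)^2$, and this order is genuinely attained (for instance, the positive-density set of $n$ with no prime factor in $(\sqrt P,P]$ has $\theta_P(n)$ below $\log P$ by $\gg\log P$). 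So $\theta_P$ does not concentrate around its mean, the $\ell^1$ error in your identity is $O(N\log P)$ rather than $O(N\sqrt{\log P})$, and after dividing by $\log P$ you are left with $S=(\cdots)+O(N)$, which is vacuous.

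The second gap is that Step 3 fails even if Step 2 were granted. With weights $\log p$ the Cauchy--Schwarz diagonal is already $\sum_{p\le P}(\log p)^2(N/p)\sim\tfrac12 N(\log P)^2$, so the diagonal alone, after dividing by $\log P$, returns a bound of size $N/\sqrt2$ --- no saving whatsoever; the gain in the K\'atai/Daboussi/BSZ mechanism comes precisely from weights $w_p$ with $\sum_p w_p^2/p$ much smaller than $(\sum_p w_p/p)^2$, which holds for $w_p=1$ but fails for $w_p=\log p$. Worse, the off-diagonal is not tamed by your weighting: the hypothesis gives at most $\tau N\sum_{p_1\ne p_2\le P}\log p_1\log p_2/\max(p_1,p_2)\asymp\tau N\sum_{p\le P}\log p\asymp\tau N e^{1/\tau}$ by Chebyshev, so the estimate $\sum_{p\le P}\log p\ll P$ that you invoke as the rescue \emph{is} the exponential blow-up; the catastrophe you correctly diagnose for the unweighted version persists verbatim. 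No choice of weights can fix this while all $\sim e^{1/\tau}/\log(e^{1/\tau})$ primes up to $e^{1/\tau}$ enter the expanded square with only the trivial $1/\max(p_1,p_2)$ decay per pair: a correct proof (as in \cite{BouSarZie}) must arrange the decomposition so that the prime pairs feeding the off-diagonal are few enough for the factor $\tau$ to win, while the weight used to pass back to $S$ still concentrates, and balancing those two demands is where the shape $2\sqrt{\tau\log(1/\tau)}\,N$ comes from. Your calibration meets neither demand, so the proposed proof does not establish the lemma, nor even a bound $o(N)$ in terms of $\tau$.
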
 
Lemma~\ref{lem:BSZ} reduces the estimation of $T(N)$ to that of  
\begin{eqnarray}\label{tTN=d1d2}
\widetilde{T}(N)
=\sum_{n\leq N} e\{b_2 F(d_1 n)- b_2 F(d_2 n)+P(d_1n) - P(d_2n)
+d_1n m \a -d_2n m\a\} 
\end{eqnarray}  
where $d_1\not=d_2$ are positive integers. 
Without loss of generality we assume henceforth that $d_1>d_2$. Noting that 
\begin{eqnarray*}
b_2 F(d_1 n)- b_2 F(d_2 n)
&=& \{b_2 f_1(d_1n\th_1) - b_2 f_1(d_2n\th_1)\}
+\cdots \\ 
&&+ \{b_2 f_{J}(d_1n\th_J) - b_2 f_{J}(d_2n\th_J)\}, 
\end{eqnarray*}
we can repeat the argument in case (A) but with $J$ there 
replaced by $J-1$. Thus in (\ref{tTN=d1d2}) the factors 
$$
e(b_2 f_1), e(- b_2 f_1), \ldots, 
e(b_2 f_{J-1}), e(- b_2 f_{J-1}) 
$$  
can be removed by repeated application of Fourier analysis, 
but the factor 
$$
e\{b_2 f_{J}(d_1n\th_J) - b_2 f_{J}(d_2n\th_J)\}
$$ 
remains in the summation.   
Hence instead of (\ref{N/Cont3}) we 
have in the present situation  
\begin{eqnarray}\label{TN/Cont/4}
|\widetilde{T}(N) | 
\leq \widetilde{\Sigma} \widetilde{\Pi}  
\end{eqnarray}
with new definitions of $\widetilde{\Sigma}$ and $\widetilde{\Pi}$. 
In fact in the above 
\begin{eqnarray}\label{DEF/Sig=/CB}
\widetilde{\Sigma} 
&=& 
\sup \bigg|\sum_{n\leq N}
e\{n(d_1 k_1 \th_1+\cdots + d_1 k_{J-1} \th_{J-1} - 
d_2 l_1 \th_1-\cdots - d_2 l_{J-1} \th_{J-1})  \nonumber\\  
&&  + b_2f_{J}(d_1n\th_J) - b_2 f_{J}(d_2n\th_J)+P(d_1n)-P(d_2n) 
+(d_1-d_2)mn\a\}\bigg|, 
\end{eqnarray}
where the $\sup$ is taken over $\a, m, d_1, d_2, k_1, \ldots, k_{J-1}, 
l_1, \ldots, l_{J-1}, \th_1, \ldots, \th_{J-1}$.   
Also similar to (\ref{def/PI=}),  
\begin{eqnarray*}\label{def/PI=/CB}
\widetilde{\Pi}= (8 b_2)^{4(J-1)} \prod_{j=1}^{J-1} 
(1+ m_j^+ \Phi_j)^4, 
\end{eqnarray*}
where we note that $\widetilde{\Pi}$ does not have 
any factor involving the subscript $J$. 
Similar argument gives  
\begin{eqnarray}\label{TilPi4/I}
\widetilde{\Pi}
\leq m_J^9 \leq Y^9  
\end{eqnarray} 
provided that $B$ is sufficiently large in terms of $\tau$ and $b_2$. 

\medskip 
 
To handle $\widetilde{\Sigma}$, we write $\widetilde{f}_J(x)$ for 
$f_{J}(d_1x)-f_{J}(d_2x)$ so that \begin{eqnarray}\label{Fnmj/C/3}
\widetilde{f}_J(x)
= \sum_{1\leq |m|< M_J}
\hat{h}(m_J m) e(m_J m x_1)\f{e(d_1 m x)-e(d_2 m x)}{e(m \th_J)-1}, 
\quad x\in [\th_J, \th_J N], 
\end{eqnarray}
where recall that $M_J=Y/m_J$ by definition. We want to 
estimate $\widetilde{\Sigma}$ by Poisson's summation formula and 
the method of stationary phase.   
To this end, we need to know the 
derivatives of $\widetilde{f}_J(x)$. 
We are going to use the third derivative of 
$\widetilde{f}_{J}(x)$, which is
\begin{eqnarray}\label{3rd/der}
\widetilde{f}^{(3)}_{J}(x)
= (2\pi i)^3 \sum_{1\leq |m| < M_J} m^3 \hat{h}(m m_J)e(m m_J x_1)
\f{d_1^3 e(d_1m x)- d_2^3  e(d_2 m x)}{e(m\th_J)-1}; 
\end{eqnarray}
the reason for using the third derivative 
will be explained later. 
Since 
$\th_J<\f{1}{m_J^+}$ we have  
$$
|m|\th_J<M_J\th_J \leq \f{1}{m_J} 
$$  
for $|m|<M_J$, and hence 
\begin{eqnarray*}
e(m\th_J)-1=2\pi i m\th_J (1+O(M_J\th_J)). 
\end{eqnarray*} 
It follows that 
\begin{eqnarray}\label{fKJx=}
\widetilde{f}^{(3)}_{J}(x)
= -\f{(2\pi) ^{2}}{\th_J} (\phi^{(3)}(x)+O(d_1^3 M_J\th_J\Phi_J)), 
\end{eqnarray}
where 
\begin{eqnarray}\label{Def/phi=}
\phi^{(3)}(x)=\sum_{1\leq |m| < M_J} m^2\hat{h}(m m_J)e(m m_J x_1)
\{d_1^3 e(d_1mx)- d_2^3  e(d_2mx)\}. 
\end{eqnarray}
The polynomial $\phi^{(3)}(x)$ is too long for a stationary phase argument, 
however the upper and lower bound conditions (\ref{hhat/UPP}) and (\ref{hhat/LOW}) 
enable us to cut $\phi^{(3)}(x)$ at some fixed integer $D$. We will show in the 
following subsection that the choice 
\begin{eqnarray}\label{def/D=[/]}
D=[\tau_2/\tau]+2 
\end{eqnarray}
is acceptable, where $[x]$ denotes the integral part of $x$. 

\subsection{The polynomials $\phi^{(3)}$ and $\phi_D^{(3)}$, and bounds 
for $\widetilde{f}^{(3)}_{J}(x)$}  
We denote 
by $\phi_D^{(3)}$ the part of $\phi^{(3)}$ with $|m|\leq D$, that is   
\begin{eqnarray}\label{DEF/phiD=}
\phi_D^{(3)}(x)=
\sum_{1\leq |m| \leq D} m^2\hat{h}(m m_J)e(m m_J x_1)
\{d_1^3 e(d_1mx)- d_2^3  e(d_2mx)\},  
\end{eqnarray} 
and we want to approximate $\phi^{(3)}$ by this $\phi_D^{(3)}$. 
By the upper bound condition (\ref{hhat/UPP}), the tail 
$\phi^{(3)}-\phi_D^{(3)}$ can be estimated as 
\begin{eqnarray}\label{tail/phi}
\phi^{(3)}(x)-\phi_D^{(3)}(x) 
&\ll& d_1^3 \sum_{m\geq D+1} m^2 |\hat{h}(m m_J)| \nonumber\\ 
&\ll& d_1^3 \sum_{m\geq D+1} m^2 e^{-\tau m m_J} 
\ll d_1^3 e^{-\tau D m_J}, 
\end{eqnarray}
where the implied constants depend at most on $\tau$ and $\tau_2$.  
Next we are going to prove that, when $x$ is away from the zeros of $\phi_D^{(3)}(x)$ 
by a small quantity $\d$, 
$|\phi_D^{(3)}(x)|$ is away from $0$ by some quantity depending on $\d$. 

\begin{lemma}\label{lem: UniAwa}
Let $P(z)$ be a complex polynomial of degree $n$ defined by  
\begin{eqnarray}
P(z)= c_0+ c_1 z + \cdots + c_n z^n,      
\end{eqnarray} 
and let $z_1, \ldots, z_n$ be the zeros of $P(z)$.  Let $\d$ be a small 
real number, and around each $z_j$ make a disc $D_j=\{z: |z-z_j|<\d\}$ 
where $j=1,\ldots, n$. Let ${\Bbb T}$ denote the unit circle. Then for any 
$z\in {\Bbb T}\backslash \{\cup_{j=1}^n D_j\}$ we have  
\begin{eqnarray*}
|P(z)|\geq  \bigg(\f{\d}{3}\bigg)^n \|P\|_2,  
\end{eqnarray*}
where 
\begin{eqnarray}\label{def/||P||}
\|P\|_2= \bigg(\sum_{m=0}^n |c_m|^2\bigg)^{\f12}. 
\end{eqnarray}
\end{lemma}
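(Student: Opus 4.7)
The plan is to factor $P$ through its roots as $P(z)=c_n\prod_{j=1}^n(z-z_j)$ and to compare both sides of the desired inequality to the common product $|c_n|\prod_{j=1}^n(1+|z_j|)$. The $L^2$ norm $\|P\|_2$ will be bounded above by this product using $\|P\|_2\leq\|P\|_\infty$ on the unit circle together with the triangle inequality, while $|P(z)|$ will be bounded below by this product times $(\d/3)^n$ whenever $z$ is $\d$-separated from every root. The two bounds combine directly to give the lemma.

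The key algebraic step is the pointwise comparison: assuming $\d\leq 1$ (if $\d$ is not small the statement is either vacuous or trivial after rescaling),
\begin{eqnarray*}
1+|z_j|\,\leq\,\frac{3}{\d}\,|z-z_j|\qquad\hbox{for every }z\in\Bbb T\setminus D_j,\ j=1,\ldots,n.
\end{eqnarray*}
I would split into two cases. If $|z_j|\leq 2$, then $1+|z_j|\leq 3$ and the hypothesis $|z-z_j|\geq\d$ yields $(3/\d)|z-z_j|\geq 3$ directly. If $|z_j|>2$, the reverse triangle inequality gives $|z-z_j|\geq|z_j|-1$, and one checks that $3(|z_j|-1)\geq|z_j|+1$ for $|z_j|\geq 2$; since $\d\leq 1$ this already yields the required bound with room to spare.

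Granted this comparison, the proof finishes in one line. Writing $c_n$ for the leading coefficient and using Parseval together with the trivial estimate $|e^{i\th}-z_j|\leq 1+|z_j|$,
\begin{eqnarray*}
\|P\|_2\,\leq\,\|P\|_\infty\,\leq\,|c_n|\prod_{j=1}^n(1+|z_j|)\,\leq\,\left(\frac{3}{\d}\right)^n|c_n|\prod_{j=1}^n|z-z_j|\,=\,\left(\frac{3}{\d}\right)^n|P(z)|,
\end{eqnarray*}
which is precisely the asserted inequality. The argument is entirely elementary and I do not anticipate any serious obstacle; the only mild subtlety lies in the case distinction for $|z_j|$, which is why the constant in the denominator is $3$ rather than $2$—the factor $1+|z_j|$ cannot be controlled by a fixed multiple of $|z-z_j|$ uniformly when $z_j$ lies just outside the unit disc and $\d$ is small.
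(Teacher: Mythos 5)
Your proof is correct. The key pointwise inequality $1+|z_j|\le\frac{3}{\d}|z-z_j|$ (for $z\in\Bbb T$, $|z-z_j|\ge\d$, $\d\le 1$) holds in both cases $|z_j|\le 2$ and $|z_j|>2$ exactly as you argue, and the chain $\|P\|_2\le\|P\|_\infty\le|c_n|\prod_j(1+|z_j|)\le(3/\d)^n|c_n|\prod_j|z-z_j|=(3/\d)^n|P(z)|$ is watertight.

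The paper's proof shares your case split at $|z_j|=2$ but organizes it differently: it factors $P=P_0P_1$ with $P_0$ collecting the roots in $|z_j|\le 2$ and $P_1$ the rest, then (i) pulls a $\d^k$ factor out of $|P_0(z)|$ using $|z-z_j|\ge\d$ only for the near roots, (ii) bounds $\|P_0\|_2\le|c_n|3^k$ by Parseval on $P_0$ alone, (iii) bounds $\|P\|_2\le\|P_0\|_2\max_{\Bbb T}|P_1|$, and (iv) controls the ratio $|P_1(z)|/\max_{\Bbb T}|P_1|\ge 3^{k-n}$ via $\frac{|z_j|-1}{|z_j|+1}\ge\frac13$ for the far roots. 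Your version is more streamlined: by proving a single uniform comparison $1+|z_j|\le(3/\d)|z-z_j|$ valid for every root, you avoid splitting the polynomial into two pieces and avoid the detour through $\max_{\Bbb T}|P_1|$. The paper's organization has the cosmetic advantage that the $\d$-dependence comes out as $\d^k$ (with $k\le n$ only the number of near roots, so it is a tiny bit sharper when many roots lie far from $\Bbb T$), but since the stated lemma only asserts $(\d/3)^n$, both arguments deliver the same conclusion and yours does so with less bookkeeping.
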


We remark that ${\Bbb T}\backslash \{\cup_j D_j\}$ is the unit circle with some 
open arcs removed, and some of the removed open arcs may not contain 
any zero of $P(z)$. The total number of these removed open arcs is 
at most $n$.  

\begin{proof} 

Suppose that $|z_j|\leq 2$ for $j=1, \ldots, k$, while 
$|z_j|>2$ for $j=k+1, \ldots, n$. Then we can write 
$P(z)= P_0(z) P_1(z)$ with 
\begin{eqnarray*}
P_0(z)= c_n \prod_{j=1}^k (z-z_j), 
\quad 
P_1(z)=\prod_{j=k+1}^n (z-z_j). 
\end{eqnarray*} 
First we note that a lower bound for $|P(z)|$ follows directly from 
the construction of ${\Bbb T}\backslash \{\cup_{j=1}^n D_j\}$, 
that is 
\begin{eqnarray}\label{Ori/|P|>}
|P(z)|\geq c_n \d^{k} |P_1(z)|,  \quad z\in {\Bbb T}\backslash \{\cup_{j=1}^n D_j\}.  
\end{eqnarray}
Next we compute the norms of $P$ and $P_0$, getting 
\begin{eqnarray*}
\|P_0\|_2^2 
=\int_0^1 |P_0(e(x))|^2 d x 
=\int_0^1 c_n^2 \prod_{j=1}^k |e(x)-z_j|^2 d x
\leq c_n^2 3^{2k}, 
\end{eqnarray*}
and 
\begin{eqnarray*}
\|P\|_2^2
&=&\int_0^1 |P(e(x))|^2 d x 
\leq \max_{z\in \Bbb T}|P_1(z)|^2  
\int_0^1 |P_0(e(x))|^2 d x  \\ 
&\leq& c_n^2 3^{2k} \max_{z\in \Bbb T}|P_1(z)|^2.   
\end{eqnarray*}
The last inequality combined with (\ref{Ori/|P|>}) gives  
\begin{eqnarray}\label{|P|>/Con}
|P(z)|\geq  \bigg(\f{\d}{3}\bigg)^{k} 
\|P\|_2  \f{|P_1(z)|}{\max\limits_{z\in \Bbb T} |P_1(z)|},  
\quad z\in {\Bbb T}\backslash \{\cup_{j=1}^n D_j\}. 
\end{eqnarray}
Suppose $\max\limits_{z\in \Bbb T} |P_1(z)|$ is achieved 
at $z=\zeta\in {\Bbb T}$. Then for any $z\in {\Bbb T}$ 
we have 
\begin{eqnarray*}
\f{|P_1(z)|}{\max\limits_{z\in \Bbb T} |P_1(z)|}
= \prod_{j=k+1}^n \f{|z-z_j|}{|\zeta-z_j|}
\geq \prod_{j=k+1}^n \f{|z_j|-1}{|z_j|+1}
\geq 3^{k-n}.   
\end{eqnarray*}
The desired result finally follows from this and (\ref{|P|>/Con}). 
\end{proof}

We want to apply the above lemma to $\phi_D^{(3)}$. Multiplying  
$\phi_D^{(3)}$ by $e(d_1 Dx)$, we have  
\begin{eqnarray}\label{phi=phi1+phi2}
e(d_1Dx) \phi_D^{(3)}(x)  
=\phi_{D, 1}^{(3)}(x)-\phi_{D, 2}^{(3)}(x), 
\end{eqnarray}  
where, for $\ell =1, 2,$ 
\begin{eqnarray}\label{Def/phiD1}
\phi_{D, \ell}^{(3)}(x)
= d_\ell^3 \sum_{m=-D}^{D} m^2 \hat{h}(m m_J) e(m m_J x_1) e(d_\ell  mx +d_1 Dx).  
\end{eqnarray} 
Recall that we have assumed $d_1>d_2$. The norm of $\phi_{D, \ell}^{(3)}$  
can be computed as 
$$
\|\phi_{D, \ell}^{(3)}\|_2=d_\ell^3 \Phi 
$$
with 
\begin{eqnarray}\label{def/Phi}
\Phi= \bigg(\sum_{m=-D}^{D} |m|^4 |\hat{h}(m m_J)|^2\bigg)^{\f12},  
\end{eqnarray}
and therefore, by (\ref{phi=phi1+phi2}) and the triangle inequality, 
\begin{eqnarray}\label{App/Tri}
\|\phi_D^{(3)}\|_2   
&=&\|\phi_{D, 1}^{(3)}-\phi_{D, 2}^{(3)}\|_2
\geq \|\phi_{D, 1}^{(3)}\|_2-\|\phi_{D, 2}^{(3)}\|_2 \nonumber\\  
&=& (d_1^3-d_2^3)\Phi \geq \Phi. 
\end{eqnarray}  

If we write $z=e(x)$, then $z$ lives on $\Bbb T$ and 
$e(d_1Dx) \phi_D^{(3)}(x)$ can be written as a polynomial,  
say $P(z)$, in $z$ with degree $2 d_1 D$. An application of Lemma~\ref{lem: UniAwa} 
to $P(z)$ asserts that    
\begin{eqnarray}\label{App/LemAwa}
|P(z)|  
\geq \bigg(\f{\d}{3}\bigg)^{2 d_1 D} \|P\|_2, 
\quad z\in {\Bbb T}\backslash \{\cup_{j=1}^n D_j\},  
\end{eqnarray} 
where $\|P\|_2$ is defined as in (\ref{def/||P||}). Obviously $\|P\|_2=\|\phi_D^{(3)}\|_2$. 
   
Under the map $x\mapsto z=e(x)$, the pre-image of 
$z\in {\Bbb T}\cap \{\cup_{j=1}^n D_j\}$  
is a union of small intervals 
$$
\bigcup_{\ell\leq L} I_\ell \subset (0, 1], 
$$ 
where $L\leq \deg (P) = 2d_1D$.  Note that each $I_\ell$ has length at most 
$2\d$. It follows from (\ref{App/Tri}) and 
(\ref{App/LemAwa}) that, 
for $x\in (0, 1]\backslash \{\cup_{\ell\leq L} I_\ell\}$, 
\begin{eqnarray*}
|\phi_D^{(3)}(x)|
\geq \bigg(\f{\d}{3}\bigg)^{2 d_1 D} \Phi.  
\end{eqnarray*} 
Obviously $\Phi\geq |\hat{h}(m_J) |$, which together with (\ref{tail/phi})  gives 
\begin{eqnarray}\label{12phiD-Tail}
\f12 |\phi_D^{(3)}(x)|  - |\phi^{(3)}(x)-\phi_D^{(3)}(x)|  
\geq \f12 \bigg(\f{\d}{3}\bigg)^{2 d_1 D} |\hat{h}(m_J)| 
- K d_1^3 e^{-\tau D m_J},   
\end{eqnarray}  
where $K=K(\tau, \tau_2)$ is the final constant implied in (\ref{tail/phi}).   
The lower bound condition (\ref{hhat/LOW}) implies that 
$|\hat{h}(m_J) | \gg e^{-\tau_2 m_J}$, and hence the right-hand side of 
(\ref{12phiD-Tail}) is positive provided that $m_J$ is large and 
\begin{eqnarray}\label{CON/d1}
d_1^3 \leq \bigg(\f{\d}{3}\bigg)^{2d_1D}
\f{e^{(\tau D - \tau_2) m_J}}{2Km_J}. 
\end{eqnarray} 
In view of (\ref{def/D=[/]}) and (\ref{mJY<mJ+1}),  
the exponent $(\tau D - \tau_2) m_J$ approaches infinity when $N\to\infty$. 
Suppose that (\ref{CON/d1}) is satisfied. 
Then, for $x\in (0, 1]\backslash \{\cup_{\ell\leq L} I_\ell\}$, 
\begin{eqnarray}\label{Fin/Lphi}
|\phi^{(3)}(x)|  
\geq \f12 |\phi_D^{(3)}(x)| + \bigg(\f12 |\phi_D^{(3)}(x)|
-|\phi^{(3)}(x)-\phi_D^{(3)}(x)|\bigg)   
\geq \f12 \bigg(\f{\d}{3}\bigg)^{2 d_1 D} \Phi.  
\end{eqnarray}  
We collect the above analysis to get the following result.  

\begin{lemma}\label{zero/ULbd} 
Let notations be as above and assume (\ref{CON/d1}). If 
\begin{eqnarray}\label{Con/d/an}
d_1^3 \leq \bigg(\f{\d}{3}\bigg)^{2d_1D}\f{1}{\th_JY^3}, 
\end{eqnarray} 
then, for $x\in (0, 1]\backslash \{\cup_{\ell\leq L} I_\ell\}$, 
\begin{eqnarray*}\label{tilfJ3/Lb}
|\widetilde{f}_J^{(3)}(x)|
\gg \f{\Phi_J}{\th_JY} \bigg(\f{\d}{3}\bigg)^{2 d_1 D}, 
\end{eqnarray*}
where the implied constant depends at most on $\tau$ and $\tau_2$.   
\end{lemma}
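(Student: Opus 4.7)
The plan is to substitute the lower bound (\ref{Fin/Lphi}) for $|\phi(x)|$ into the asymptotic expansion (\ref{fKJx=}) of $\widetilde f_J^{(3)}(x)$, and then use the hypothesis (\ref{Con/d/an}) together with the upper and lower bounds on $\hat h$ to show that the error term in (\ref{fKJx=}) is dominated by the main term on the good set $(0,1]\setminus\bigcup_{\ell\leq L}I_\ell$.

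First I would extract from (\ref{fKJx=}) the one-sided bound
$$\bigl|\widetilde f_J^{(3)}(x)\bigr|\geq \frac{(2\pi)^2}{\th_J}\bigl(|\phi(x)|-C_0\,d_1^3\,M_J\,\th_J\,\Phi_J\bigr)$$
for some constant $C_0$ depending only on $\tau$ and $\tau_2$. On the good set, (\ref{Fin/Lphi}) supplies $|\phi(x)|\geq\tfrac12(\d/3)^{2d_1 D}\Phi$, so the task reduces to controlling the error. Using $M_J=Y/m_J\leq Y$ and plugging in (\ref{Con/d/an}),
$$d_1^3\,M_J\,\th_J\,\Phi_J\leq Y\,\th_J\,\Phi_J\cdot\frac{(\d/3)^{2d_1 D}}{\th_J Y^3}=\frac{(\d/3)^{2d_1 D}\,\Phi_J}{Y^2}.$$

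Next I would establish a comparison $\Phi\asymp_{\tau,\tau_2}\Phi_J$. By (\ref{hhat/UPP}), for every $m_J\geq 2$ the portions of the defining sums with $|m|\geq 2$ are bounded by a $\tau$-dependent constant times their respective $|m|=1$ contributions, since the tails are geometric of common ratio $e^{-\tau m_J}\leq e^{-2\tau}$; meanwhile (\ref{hhat/LOW}) gives $|\hat h(m_J)|\gg e^{-\tau_2 m_J}$, so the $|m|=1$ term itself does not vanish. Thus $\Phi\asymp_{\tau,\tau_2}|\hat h(m_J)|\asymp_{\tau,\tau_2}\Phi_J$. Consequently the error $(\d/3)^{2d_1 D}\Phi_J/Y^2$ is smaller than the main term $\tfrac12(\d/3)^{2d_1 D}\Phi$ by a factor of order $Y^2$, and assembling gives
$$\bigl|\widetilde f_J^{(3)}(x)\bigr|\gg \frac{(\d/3)^{2d_1 D}\,\Phi}{\th_J}\gg \frac{(\d/3)^{2d_1 D}\,\Phi_J}{\th_J Y},$$
which is the stated bound, with the extra factor $1/Y$ absorbed as a safety margin.

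The main obstacle I anticipate is the uniformity of the comparison $\Phi\asymp_{\tau,\tau_2}\Phi_J$ across the whole range $2\leq m_J\leq Y$: the implied constants must depend only on $\tau,\tau_2$ and not on $m_J$ (in particular not degrade as $m_J$ approaches the truncation level $Y$). This is exactly where both exponential hypotheses on $\hat h$ are indispensable: (\ref{hhat/UPP}) is needed to dominate the tails of both sums geometrically by their $|m|=1$ contribution uniformly in $m_J$, while (\ref{hhat/LOW}) prevents $|\hat h(m_J)|$ from collapsing and turning the ratio $\Phi/\Phi_J$ into an exponentially small quantity. Once this uniform comparison is in place, the remaining manipulations are a direct substitution and absorption of constants into the implied $\gg$.
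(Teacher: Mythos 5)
Your overall structure matches the paper's: substitute the lower bound (\ref{Fin/Lphi}) into (\ref{fKJx=}), absorb the error term via (\ref{Con/d/an}), and finish by relating $\Phi$ to $\Phi_J$. The final assembly and the estimate $d_1^3 M_J\th_J\Phi_J \leq (\d/3)^{2d_1D}\Phi_J/Y^2$ are fine. The problem is the justification of the comparison $\Phi\asymp_{\tau,\tau_2}\Phi_J$. You argue that both sums are dominated by their $|m|=1$ terms because the $|m|\geq 2$ tails are geometric of ratio $e^{-\tau m_J}$, hence $\ll e^{-2\tau m_J}$, while $|\hat h(m_J)|\gg e^{-\tau_2 m_J}$. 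But this requires $e^{-2\tau m_J}\ll e^{-\tau_2 m_J}$, i.e.\ $\tau_2\leq 2\tau$, which is not assumed. If $\tau_2>2\tau$ and $\hat h(m_J)$ sits near the lower bound $e^{-\tau_2 m_J}$ while $\hat h(2m_J)$ sits near the upper bound $e^{-2\tau m_J}$, then the $|m|=2$ term already dominates the $|m|=1$ term, and the claim $\Phi\asymp|\hat h(m_J)|\asymp\Phi_J$ fails. The comparability of the two quantities $\Phi$ and $\Phi_J$ is not established by your reasoning.

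The repair, which is exactly what the paper does, is to split at $|m|=D$ rather than $|m|=2$, where $D=[\tau_2/\tau]+2$ is \emph{designed} so that $\tau(D+1)>\tau_2$. The finite block $1\leq|m|\leq D$ cannot be reduced to a single term; one compares it to $\Phi$ by Cauchy--Schwarz, which is precisely where a factor like $\sqrt{M_J}$ (or $\sqrt{2D}$) enters. The tail $|m|>D$ is then genuinely negligible: it is $\ll e^{-\tau(D+1)m_J}\ll e^{-\tau_2 m_J}\ll|\hat h(m_J)|\leq\Phi$, and this is the sole place the lower bound (\ref{hhat/LOW}) and the definition of $D$ are used. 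The paper settles for the weaker but sufficient inequality $\Phi_J^2\ll M_J\Phi^2$ (giving $\Phi\gg\Phi_J/M_J$), which is all your argument actually needs since you only require $\Phi\gg\Phi_J/Y$ at the end; the stronger two-sided $\Phi\asymp\Phi_J$ is not needed and your derivation of it is invalid as written.
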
 

At the present stage we do not need to know 
which one of (\ref{Con/d/an}) and (\ref{CON/d1}) 
is more restrictive. From now on we assume both (\ref{Con/d/an}) and (\ref{CON/d1}), 
and in \S6.3 we will show that they are both satisfied by choosing $\d$ and $C$ properly.  

\begin{proof} To prove the lemma we must compare $\Phi$  with $\Phi_J$.   
The definitions (\ref{def/Phi})  and (\ref{||fj||=}) trivially imply 
\begin{eqnarray*}
\Phi^2\leq \sum_{1\leq |m|< M_J} |m|^4 |\hat{h}(m_J m)|^2 \leq \Phi_J^2. 
\end{eqnarray*}
In the other direction we have by Cauchy's inequality that 
\begin{eqnarray*}
\Phi_J^2 
\leq 2M_J \sum_{1\leq |m|< M_J} |m|^4 |\hat{h}(m_J m)|^2. 
\end{eqnarray*}
We cut the last sum at $D$; by the argument in (\ref{tail/phi}) and the upper bound condition (\ref{hhat/UPP}), the tail can be estimated as  
\begin{eqnarray*}
\sum_{D+1 \leq |m|< M_J}
|m|^4 |\hat{h}(m_J m)|^2
\ll e^{-2\tau D m_J}, 
\end{eqnarray*} 
where the implied constant depends at most on $\tau$ and $\tau_2$.  
The last quantity is $\ll e^{-2\tau_2 m_J}\ll |\hat{h}(m_J)|^2\leq \Phi^2$  
by the definition of $D$ in (\ref{def/D=[/]}) 
as well as the lower bound condition (\ref{hhat/LOW}). It follows that 
\begin{eqnarray*}
\sum_{1\leq |m|< M_J} |m|^4 |\hat{h}(m_J m)|^2 
\ll \Phi^2, 
\end{eqnarray*}
that is $\Phi_J^2\ll M_J \Phi^2$, 
where the implied constant depends at most on $\tau$ and $\tau_2$.  

We deduce form this and (\ref{Fin/Lphi}) that, 
for $x\in (0, 1]\backslash \{\cup_{\ell\leq L} I_\ell\}$, 
\begin{eqnarray*}
|\phi^{(3)}(x)|\gg  \bigg(\f{\d}{3}\bigg)^{2 d_1 D}\f{\Phi_J}{M_J}, 
\end{eqnarray*}
and hence (\ref{Con/d/an}) and (\ref{fKJx=}) imply  
\begin{eqnarray*} 
|\widetilde{f}^{(3)}_{J}(x)| 
\gg \f{\Phi_J}{\th_JM_J} \bigg(\f{\d}{3}\bigg)^{2 d_1 D} 
\end{eqnarray*}
where the implied constants depend at most on $\tau$ and $\tau_2$.  
The desired result now follows from this and $M_J\leq Y$. \end{proof} 

In applications we must reformulate Lemma~\ref{zero/ULbd} for   
the function $\widetilde{f}_J^{(3)}(x\th_J)$ with $x\in (0, \th_J^{-1}].$  
Write $x\th_J=\xi$ and   
\begin{eqnarray}\label{def/Jell}
J_\ell = \th_J^{-1} I_{\ell}, 
\end{eqnarray} 
that is each $J_\ell$ is an amplification of $I_\ell$ by $\th_J^{-1}$. Note that  
the length of each $J_\ell$ is $\leq 2\th_J^{-1}\d$. 
Hence Lemma~\ref{zero/ULbd} implies that,  
for $x\in (0, \th_J^{-1}]\backslash \{\cup_{\ell\leq L} J_\ell\}$,    
\begin{eqnarray}\label{Lphi/xi}
|\widetilde{f}_J^{(3)}(x)|
\gg \f{\Phi_J}{\th_JY} \bigg(\f{\d}{3}\bigg)^{2 d_1 D}. 
\end{eqnarray} 
On the other hand we deduce trivially from (\ref{3rd/der}) that, for all real $x$, 
\begin{eqnarray}\label{Upp/phixth} 
|\widetilde{f}_J^{(3)}(x\th_J)|\ll d_1^3\f{\Phi_J}{\th_J}. 
\end{eqnarray}
The implied constants in (\ref{Lphi/xi}) and (\ref{Upp/phixth}) are absolute. 
These bounds will be used in the following subsection. 

\subsection{Application of Poisson's summation and stationary phase.}   
In this subsection we estimate $\widetilde{\Sigma}$ in (\ref{DEF/Sig=/CB}) by 
Poisson's summation formula and stationary phase. The following lemma of van der Corput 
(see for example Iwaniec and Kowalski \cite{IwaKow}, Theorem~8.20),    
in particular, will be applied. 

\begin{lemma}\label{lem:vdCorput}
Let $b-a\geq 1.$ Let $F(x)$ be a real function on $(a, b)$ and $k\geq 2$ 
such that 
\begin{eqnarray}
\L\leq |F^{(k)}(x)|\leq \eta\L
\end{eqnarray}
for some $\L>0$ and $\eta\geq 1$. Then
\begin{eqnarray*}
\sum_{a<n<b}e(F(n))\ll \eta^{2^{2-k}}\L^{\kappa}(b-a)+\L^{-\kappa}(b-a)^{2^{2-k}}, 
\end{eqnarray*}
where $\kappa=(2^k-2)^{-1}$ and the implied constant is absolute. 
\end{lemma}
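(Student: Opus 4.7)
The plan is to derive the statement as the classical van der Corput third-derivative test by reducing it to the second-derivative test via the A-process (Weyl differencing). As input I will take the standard second-derivative estimate: if $\lambda\leq|G''(x)|\leq\kappa\lambda$ on an interval of length $L\geq 1$, then $\sum_n e(G(n))\ll L(\kappa\lambda)^{1/2}+\lambda^{-1/2}$, which follows from decomposition into subintervals on which $G'$ varies by at most $1$ together with the Kuzmin--Landau inequality.

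First I would apply the Weyl--van der Corput inequality with parameter $1\leq H\leq b-a$ (to be chosen):
\begin{equation*}
\Big|\sum_{a<n<b}e(F(n))\Big|^{2}\;\ll\;\frac{b-a}{H}\,\Big((b-a)+\sum_{h=1}^{H-1}\Big|\sum_{n}e\bigl(G_{h}(n)\bigr)\Big|\Big),
\end{equation*}
where $G_{h}(x):=F(x+h)-F(x)$. By the mean value theorem, $G_{h}''(x)=hF^{(3)}(\xi)$ for some $\xi\in(x,x+h)$, so $h\Lambda\leq|G_{h}''(x)|\leq h\eta\Lambda$. Applying the second-derivative test to the inner sum with $\lambda=h\Lambda$ and $\kappa=\eta$ yields
\begin{equation*}
\Big|\sum_{n}e\bigl(G_{h}(n)\bigr)\Big|\;\ll\;(b-a)(h\eta\Lambda)^{1/2}+(h\Lambda)^{-1/2}.
\end{equation*}

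Summing over $1\leq h<H$ and substituting back gives
\begin{equation*}
|S|^{2}\;\ll\;\frac{(b-a)^{2}}{H}+(b-a)^{2}H^{1/2}(\eta\Lambda)^{1/2}+(b-a)H^{-1/2}\Lambda^{-1/2}.
\end{equation*}
The choice $H\asymp\Lambda^{-1/3}$ balances the first contribution against the third-derivative-type term from the second, and after taking the square root produces a bound of the stated form $\eta^{1/2}\Lambda^{1/6}(b-a)+\Lambda^{-1/6}(b-a)^{1/2}$ (in fact the optimization yields $\eta^{1/4}$ in place of $\eta^{1/2}$, which since $\eta\geq 1$ is stronger than what is claimed).

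The main obstacle is parameter management rather than any deep estimate: the choice $H=\Lambda^{-1/3}$ requires $\Lambda$ to lie in a range where $1\leq H\leq b-a$, so one must dispose of the extreme regimes separately. When $\Lambda\geq 1$ the trivial bound $|S|\leq b-a$ already majorizes $\Lambda^{1/6}(b-a)$; when $\Lambda$ is so small that $\Lambda^{-1/3}>b-a$, the second error term $\Lambda^{-1/6}(b-a)^{1/2}$ dominates and no differencing is needed. Isolating these cases and verifying the uniform absorption of implied constants is the one place requiring care.
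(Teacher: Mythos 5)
The paper does not prove this lemma; it is quoted verbatim from Iwaniec--Kowalski, Corollary~8.19, and used as a black box. So there is no in-paper proof to compare against, and the relevant question is whether your outline correctly reconstructs the standard argument.

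It does. Your route (Weyl--van der Corput differencing to reduce the third-derivative test to the second-derivative test, itself built on Kuzmin--Landau) is exactly the argument behind the cited corollary. The bookkeeping checks out: after summing $h^{1/2}$ and $h^{-1/2}$ over $1\leq h<H$ one gets
$$
|S|^{2}\ll\frac{(b-a)^{2}}{H}+(b-a)^{2}H^{1/2}(\eta\Lambda)^{1/2}+(b-a)H^{-1/2}\Lambda^{-1/2},
$$
and $H\asymp\Lambda^{-1/3}$ balances the first and second terms to give $|S|\ll\eta^{1/4}\Lambda^{1/6}(b-a)+\Lambda^{-1/6}(b-a)^{1/2}$. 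Your remark that this actually produces $\eta^{1/4}$ rather than $\eta^{1/2}$ is correct and agrees with the exponent $h^{2/2^{k}}$ in Iwaniec--Kowalski Corollary~8.19 at $k=3$; the Lemma as stated in the paper with $\eta^{1/2}$ is a harmless weakening since $\eta\geq1$. Two small points worth making explicit in a polished write-up: (i) the second-derivative test needs $G_{h}''$ to be one-signed, which follows because $F^{(3)}$ is continuous and nonvanishing (so one-signed), hence $G_{h}''(x)=\int_{x}^{x+h}F^{(3)}$ has a fixed sign; (ii) the inner sum for $G_{h}$ runs over an interval of length $\leq b-a-h\leq b-a$, which is what makes the factor $(b-a)$ legitimate in the second-derivative bound. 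Your treatment of the edge regimes $\Lambda\geq1$ and $\Lambda^{-1/3}>b-a$ via the trivial bound is exactly the right way to justify the constraint $1\leq H\leq b-a$.
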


The sum $\widetilde{\Sigma}$ in (\ref{DEF/Sig=/CB}) can be written as 
\begin{eqnarray}\label{DEF/S/C}
\widetilde{\Sigma}
= \sup \bigg|\sum_{n\leq N}e(E(n))\bigg| 
\end{eqnarray}
with 
\begin{eqnarray*}
E(x)
&=&  x(d_1 k_1 \th_1+\cdots + d_1 k_{J-1} \th_{J-1} - 
d_2 l_1 \th_1-\cdots - d_2 l_{J-1} \th_{J-1})  \nonumber\\  
&&  + b_2 \widetilde{f}_J(x\th_J)+P(d_1x)-P(d_2x) 
+(d_1-d_2)mx\a, 
\end{eqnarray*}
where the $\sup$ is taken over $\a, m, d_1, d_2, k_1, \ldots, k_{J-1}, 
l_1, \ldots, l_{J-1}, \th_1, \ldots, \th_{J-1}$. 
If we take the third derivative of $E(x)$, then all the quadratic and linear terms 
in $E(x)$ will be killed, and the argument will be clearer. This is the reason for taking the 
third derivative of $E(x)$. Thus (\ref{def/Pnax}) implies that 
\begin{eqnarray}\label{E(3)x=f}
E^{(3)}(x)
=  b_2 \widetilde{f}^{(3)}_J(x\th_J)\th_J^3. 
\end{eqnarray}
Recall that in case (C) we have $m_J^+ \Phi_J > \log^{4C}N$.  
We need to handle the following two possibilities separately:    
\begin{itemize}
\item[(C1)]  $m_J^+ \leq N$; 
\item[(C2)]  $m_J^+> N$.  
\end{itemize}

\begin{proof}[Proof of Proposition~\ref{prop:caseC} under (C1).]   
In this case we will first conduct our analysis 
on the subinterval $(0, \th_J^{-1}]\subset (0, N]$.  
The set $(0, \th_J^{-1}]\backslash \{\cup_{\ell\leq L} J_\ell\}$ consists of  
at most $L+1$ intervals, and we suppose $(a, b)$ is any one of them. 
On this interval $(a, b)$  we apply (\ref{Lphi/xi}) and (\ref{Upp/phixth}) to get 
\begin{eqnarray}\label{E(3)UpLo}
\b \th_J^2\Phi_J 
\ll 
|E^{(3)}(x)|
\ll d_1^3 \th_J^2\Phi_J 
\end{eqnarray}
with 
\begin{eqnarray}\label{b=d/32dD}
\b=\bigg(\f{\d}{3}\bigg)^{2 d_1 D}\f{1}{Y},  
\end{eqnarray} 
where the implied constants depend on $b_2, \tau,$ and $\tau_2$ only.  
This means that we can take 
$\L =\b \th_J^2\Phi_J$ and $\eta =  \b^{-1} d_1^3$  
in Lemma~\ref{lem:vdCorput} with $k=3$, 
which implies that 
\begin{eqnarray*}
\sum_{n\in (a, b)}e(E(n)) 
\ll \b^{-\f13} d_1^2 (\th_J^2\Phi_J)^{\f16}(b-a)+ (\b \th_J^2\Phi_J)^{-\f16} (b-a)^{\f12} + 1, 
\end{eqnarray*}
where we have added a $1$ on the right-hand side to cover the case $b-a<1$. 
Summing over all these possible intervals 
$(a, b)\subset (0, \th_J^{-1}]\backslash \{\cup_{\ell\leq L} J_\ell\}$, which are at most 
$L+1\leq 2d_1D+1$ in number, we get 
\begin{eqnarray}\label{Sum/E/Punc}
\sum_{n\in (0, \th_J^{-1}]\backslash \{\cup J_\ell\}} e(E(n)) 
\ll \b^{-\f13}d_1^3 (\th_J^2\Phi_J)^{\f16}\th_J^{-1}
+ d_1 (\b \th_J^2\Phi_J)^{-\f16}\th_J^{-\f12} 
+ d_1,  
\end{eqnarray}
where the implied constants depend on $b_2, \tau,$ and $\tau_2$ only.  
The length of each interval $J_\ell$ is $\ll \th_J^{-1}\d$ by (\ref{def/Jell}), 
and hence trivially  
\begin{eqnarray*}
\sum_{n\in J_\ell} e(E(n))\ll \th_J^{-1}\d.   
\end{eqnarray*}
The number $L$ of these intervals $J_\ell$ is at most $2d_1 D$, and consequently 
\begin{eqnarray*}
\sum_{n\in \cup J_\ell} e(E(n)) 
\ll  d_1 \th_J^{-1}\d, 
\end{eqnarray*} 
which together with (\ref{Sum/E/Punc}) yields  
\begin{eqnarray}\label{Sum/0thJ}
\sum_{n\in (0, \th_J^{-1}]}e(E(n)) 
\ll \b^{-\f13}d_1^3(\th_J^2\Phi_J)^{\f16}\th_J^{-1}
+ d_1 (\b \th_J^2\Phi_J)^{-\f16}\th_J^{-\f12} 
+ d_1+ d_1\th_J^{-1}\d   
\end{eqnarray}
where the implied constants depend on $b_2, \tau,$ and $\tau_2$ only.  

Now we come to the estimation of $\widetilde{\Sigma}$. 
We cut the interval $(0, N]$ into two smaller ones 
$(0, \th_J^{-1}K] \cup (\th_J^{-1}K, N]$, where $K=[\th_JN]$ and 
$[x]$ means the integral part of $x$. We may assume the second interval 
$(\th_J^{-1}K, N]$ has length $\geq (2\th_J)^{-1}$,   since otherwise 
we use $(0, N]= (0, \th_J^{-1}(K+1)] \backslash (N, \th_J^{-1}(K+1)]$. 
Hence we split  
(\ref{DEF/S/C}) as 
\begin{eqnarray}\label{D/S/SS} 
\widetilde{\Sigma}
\leq \widetilde{\Sigma}_0+ \widetilde{\Sigma}_1
\end{eqnarray} 
where 
\begin{eqnarray*} 
\widetilde{\Sigma}_0=\sup \bigg|\sum_{n\in (0, \th_J^{-1}K]} e(E(n))\bigg|, \quad 
\widetilde{\Sigma}_1=\sup \bigg|\sum_{n\in (\th_J^{-1}K, N]} e(E(n))\bigg| 
\end{eqnarray*} 
with the sup having the same meaning as in (\ref{DEF/S/C}). 
The last sum $\widetilde{\Sigma}_1$ can be bounded from above by the 
right-hand side of (\ref{Sum/0thJ}). 
The main interval $(0, \th_J^{-1}K]$ is the union of  
$K$ smaller intervals $L_k: =(\th_J^{-1}(k-1), \th_J^{-1}k]$ with $k=1, \ldots, K$, 
and (\ref{Sum/0thJ}) holds with the interval $(0,\th_J^{-1}]$ therein 
replaced by any $L_k$ with $k=2, 3, \ldots, K.$ 
It follows that  
\begin{eqnarray}\label{wtSllMes}
\widetilde{\Sigma}_0
\ll \b^{-\f13}d_1^3 (\th_J^2\Phi_J)^{\f16}N
+ d_1 (\b \th_J^2\Phi_J)^{-\f16}\th_J^{\f12} N 
+ d_1\th_J N+ d_1\d N. 
\end{eqnarray}
The first term on the right-hand 
side of  (\ref{wtSllMes}) is bounded from above by $\ll d_1^3 \b^{-\f13}\th_J^{\f13}N$ 
with the implied constant depending on $\tau$ only,    
and the second by $d_1 \b^{-\f16} \th_J^{\f16}  \Phi_J^{-\f16} N$, 
which also dominates the third term. Therefore the third term can be erased, and  
consequently (\ref{D/S/SS}) becomes 
\begin{eqnarray}\label{wtSll/fin}
\widetilde{\Sigma}
\ll d_1^3 \b^{-\f13} \f{N}{(m_J^+)^{\f13}}  
+ d_1 \b^{-\f16}  \f{NY}{(m_J^+ \Phi_J)^{\f16}}
+ d_1 \d N 
\end{eqnarray}
where the implied constant depends on $b_2, \tau, \tau_2$ only.  
We multiply $\widetilde{\Pi}$ with $\widetilde{\Sigma}$,   
and then apply the bound (\ref{TilPi4/I}) to get 
\begin{eqnarray*}\label{tilT/C/Ag}
|\widetilde{T}(N)| 
\leq \widetilde{\Sigma} \widetilde{\Pi} 
\ll d_1^3 \b^{-\f13} \f{N Y^9}{(m_J^+)^{\f13}}
+ d_1 \b^{-\f16} \f{N Y^{10}}{(m_J^+ \Phi_J)^{\f16}}
+ d_1 m_J^{9}\d N, 
\end{eqnarray*}
where the implied constant depends on $b_2, \tau, \tau_2$ only. It should be 
remarked that to the last term on the right-hand side above, the bound  
$\widetilde{\Pi}\leq m_J^9$ has been used instead of the crude bound 
$\widetilde{\Pi}\leq Y^9$. 

Now we specify 
\begin{eqnarray}\label{del=}
\d=3 m_J^{-10}, \quad C\geq 20 d_1 D+20, 
\end{eqnarray}
so that (\ref{b=d/32dD}) implies that 
\begin{eqnarray}\label{beta/d=}
\b^{-1}= m_J^{20 d_1 D}Y
\leq Y^{20d_1D+1},  
\end{eqnarray}
and hence 
\begin{eqnarray}\label{tilT/C1/fin}
\widetilde{T}(N)  
\ll \f{d_1^3 N Y^{7d_1D+10}}{(m_J^+)^{\f13}}
+ \f{d_1 N Y^{4d_1D+11}}{(m_J^+ \Phi_J)^{\f16}} 
+ \f{d_1 N}{m_J},  
\end{eqnarray}
where the implied constant depends on $b_2, \tau, \tau_2$ only. 
Applying the assumption $m_J^+\gg m_J^+\Phi_J\geq \log^{4C} N$ we get 
\begin{eqnarray}\label{S0Pi<<} 
\widetilde{T}(N) =o(N)
\end{eqnarray}
as $N\to \infty.$ 

We must check that our choices of $\d$ and $C$ in (\ref{del=}) make 
the inequalities (\ref{CON/d1}) and (\ref{Con/d/an}) meaningful, 
that is neither (\ref{CON/d1}) nor (\ref{Con/d/an})  
confines $d_1$ to a finite interval. This can be seen from 
the fact that under (\ref{del=})  
the right-hand side of (\ref{CON/d1}) equals 
$$
\f{e^{(\tau D - \tau_2) m_J}}{2m_J^{20d_1D+1}}
$$  
which clearly approaches infinity as $m_J\to\infty$, that is as $N\to \infty$. 
Also under (\ref{del=})  
the right-hand side of (\ref{Con/d/an}) is, 
by the assumption $m_J^+\gg m_J^+\Phi_J\geq \log^{4C} N$ again,    
\begin{eqnarray*}
\gg \f{m_J^+}{Y^3m_J^{20d_1D}}
\gg \f{\log^{4C}N}{Y^{20d_1D+3}}
\end{eqnarray*} 
which also approaches to infinity as $N\to \infty$. Thus our choices of $\d$ and $C$ 
are indeed acceptable.  
This completes our analysis concerning  
the sum $\widetilde{T}(N)$.    
The desired result for $S(N)$ follows from (\ref{N/Cont2/+}), and this finishes the analysis in 
case (C1). 
\end{proof} 

\subsection{The case (C2).} 
In the present situation we start the analysis on $(0, N]$ directly, instead of on $(0, \th_J^{-1}]$.  
First we remark that the case $m_J^+ \sqrt{\d}\leq N$ can be treated in the 
same way  as in (\ref{Sum/0thJ}), where $\d$ is the same as in 
the proof for (C1) and has been specified  
as $\d=3 m_J^{-10}$  in  (\ref{del=}).  The reason is that 
$\d=o(\sqrt{\d})$ as $\d\to 0$. Hence from now on we assume $m_J^+ \sqrt{\d}>N$.  

\begin{lemma}\label{lem:LA} 
Let $\l=(\l_{-D}, \ldots, \l_{-1}, \l_1, \ldots, \l_D)\in {\Bbb C}^{2D}$ 
be a non-zero vector. Then there are two  
constants $\ve_0$ and $4\leq k\leq 2D+3$, both of which depend on $D$ only, such that 
\begin{eqnarray}
\bigg|\sum_{1\leq |m|\leq D} \l_m m^{k} \bigg|
> \ve_0 \|\l\|_2,    
\end{eqnarray}
where  $\|\l\|_2$ denotes the $l^2$ norm of $\l$.  
\end{lemma} 

\begin{proof} The vectors $(m^4, m^5, \ldots, m^{2D+3})$ with $m=\pm 1, \ldots, \pm D$ 
form a $(2D)\times (2D)$ Vandermonde matrix  whose determinant is non-zero, and therefore the vector $\l$ 
cannot be perpendicular to all of these vectors.  
\end{proof}

\begin{proof}[Proof of Proposition~\ref{prop:caseC} under (C2).] 
Similar to (\ref{fKJx=}) and (\ref{Def/phi=}) we have, for any positive 
integer $k$ and $x\leq \th_J N\ll \sqrt{\d}$,  
\begin{eqnarray}\label{fKJx=/k}
\widetilde{f}^{(k)}_{J}(x)
= \f{(2\pi i)^{k-1}}{\th_J} (\phi^{(k)}(x)+O(d_1^k M_J^{k-2} \th_J\Phi_J)), 
\end{eqnarray}
where 
\begin{eqnarray}\label{Def/phi=/k}
\phi^{(k)}(x)=\sum_{1\leq |m| < M_J} m^{k-1}\hat{h}(m m_J)e(m m_J x_1)
\{d_1^k e(d_1mx)- d_2^k  e(d_2mx)\},  
\end{eqnarray} 
and we recall  that $\Phi_J$ is defined as in (\ref{||fj||=}).  
Let $\phi_D^{(k)}(x)$ be the sum in (\ref{Def/phi=/k}) 
over the shorter range  $1\leq |m|\leq D$ with $D$ as in 
(\ref{def/D=[/]}). By the upper 
bound condition (\ref{hhat/UPP}), the tail 
$\phi^{(k)}-\phi_D^{(k)}$ can be estimated as 
\begin{eqnarray}\label{tail/phi/k}
\phi^{(k)}(x)-\phi_D^{(k)}(x) 
\ll d_1^k \sum_{m\geq D+1} m^{k-1} |\hat{h}(m m_J)|
\ll d_1^k e^{-\tau D m_J}, 
\end{eqnarray}
where the implied constants depend at most on $\tau$ and $\tau_2$.  
Since $x$ is small, we 
can expand each exponential function in $\phi_D^{(k)}(x)$, 
getting 
\begin{eqnarray*}
\phi_D^{(k)}(x)
&=& (d_1^{k}-d_2^{k}) 
\sum_{1\leq |m|\leq D} m^{k-1}\hat{h}(m m_J)e(m m_J x_1) \\ 
&& +O\{(d_1^{k+1}-d_2^{k+1})  x \Phi \} 
\end{eqnarray*} 
with $\Phi$ as in (\ref{def/Phi}).  
Fix a $k\geq 4$ such that Lemma~\ref{lem:LA} holds, that is for this 
$k$ and some positive $\ve_0$ depending on $D$ we have 
\begin{eqnarray*}
\bigg|\sum_{1\leq |m|\leq D} m^{k-1}\hat{h}(m m_J)e(m m_J x_1)\bigg| 
> \ve_0 \Phi.  
\end{eqnarray*}
Hence, for this $k$ and $x\leq \th_J N$,  
\begin{eqnarray*}
|\phi_D^{(k)}(x)| 
\gg d_1^{k-1} (1- d_1^2 x) \Phi,  
\end{eqnarray*}
where the assumption $d_1>d_2$ has been applied. If  
\begin{eqnarray}\label{C2/d1<}
d_1< (16\d)^{-\f14}  
\end{eqnarray}
then for $x\leq \th_J N$ we have, as in the proof of Lemma~\ref{zero/ULbd},   
$$
|\phi_D^{(k)}(x)| \gg d_1^{k-1} \Phi \gg d_1^{k-1} \f{\Phi_J}{Y},  
$$  
from which and (\ref{tail/phi/k}) it follows 
that $|\phi^{(k)}(x)| \gg d_1^{k-1} \Phi_J Y^{-1}.$  
Inserting this and the trivial upper bound 
$\phi^{(k)}(x)\ll d_1^k \Phi_J Y^{k-3}$ into (\ref{fKJx=/k}) yields  
that, for all real $x\leq N$, 
\begin{eqnarray*}\label{Up/Lo/k} 
d_1^{k-1}\f{\Phi_J}{\th_J Y} 
\ll |\widetilde{f}_J^{(k)}(x\th_J)|\ll d_1^{k}\f{\Phi_J Y^{k-3}}{\th_J},  
\end{eqnarray*}
which together with 
$E^{(k)}(x)= b_2 \widetilde{f}^{(k)}_J(x\th_J)\th_J^k$ gives  
\begin{eqnarray}\label{<<Ekx<<} 
d_1^{k-1}\f{\th_J^{k-1}\Phi_J}{Y} \ll |E^{(k)}(x)|\ll d_1^{k} \th_J^{k-1}\Phi_J Y^{k-3}.   
\end{eqnarray}
This corresponds to (\ref{E(3)UpLo}) in case (C1), 
and it means in Lemma~\ref{lem:vdCorput} we can take 
\begin{eqnarray}
\L= d_1^{k-1}\f{\th_J^{k-1}\Phi_J}{Y}, \quad 
\eta=d_1 Y^{k}.  
\end{eqnarray}
It follows that 
\begin{eqnarray*}
\sum_{n\leq N}e(E(n))
&\ll& (d_1 Y^k)^{2^{2-k}}\bigg(d_1^{k-1}\f{\th_J^{k-1}\Phi_J}{Y}\bigg)^{\kappa}N
+\bigg(d_1^{k-1}\f{\th_J^{k-1}\Phi_J}{Y}\bigg)^{-\kappa}N^{2^{2-k}} \\ 
&\ll& d_1 Y^2 \th_J^{k\kappa-\kappa} N
+ Y \th_J^{-k\kappa} \f{N^{2^{2-k}}}{(m_J^+ \Phi_J)^{\kappa}} \\ 
\end{eqnarray*}
where $\kappa=(2^k-2)^{-1}$ and the implied constant depends on 
$b_2, \tau, $ and $\tau_2$ only. Multipling by 
$\widetilde\Pi$ and applying (\ref{TilPi4/I}),  we have 
\begin{eqnarray}\label{C2/TN<<}
|\widetilde{T}(N)| 
&\leq& \widetilde{\Sigma}\widetilde{\Pi} 
\ll d_1 Y^{11} \th_J^{k\kappa-\kappa} N
+ Y^{10} \th_J^{-k\kappa} \f{N^{2^{2-k}}}{(m_J^+ \Phi_J)^{\kappa}} \nonumber\\  
&\ll& d_1 Y^{11} \f{N}{(m_J^+)^{k\kappa-\kappa}}
+ Y^{10} (m_J^+)^{k\kappa}  N^{2^{2-k}}.  
\end{eqnarray}
Here we have simply erased the denominator of the second term since it 
is bigger than $1$. To the term $(m_J^+)^{k\kappa-\kappa}$ in the denominator we apply  
the assumption $m_J^+ > N$ in case (C2),  
while to 
the term $(m_J^+)^{k\kappa}$ in the numerator we use the assumption 
$(m_J^+)^3 < N^4 \log^C N$ in case (C). 
The exponent of $N$ in the last term of  
(\ref{C2/TN<<}) is 
$$
\f43 k\kappa + 2^{2-k}=\f{4k}{3(2^k-2)}+2^{2-k} <1  
$$ 
when $k\geq 4$. 
This proves that $\widetilde{T}(N)=o(N)$ 
and hence $T(N)=o(N)$ by Lemma~\ref{lem:BSZ} again.    
This completes the analysis in case (C2), and 
Proposition~\ref{prop:caseC} is finally proved.  
\end{proof} 

\begin{proof}[Proof of Theorem~\ref{thm2}.] 
Theorem~\ref{thm2} follows from 
Propositions~\ref{prop/rat}, 
\ref{prop:caseA}, \ref{prop:caseB}, and 
\ref{prop:caseC}.  
\end{proof} 

\section{Disjointness of $\mu$ from Furstenberg's system}  
\setcounter{equation}{0}
\subsection{Furstenberg's example.} 

Furstenberg gave an example of smooth transformation $T: {\Bbb T}^2\to {\Bbb T}^2$ 
such that the ergodic averages do not all exist. 
Let $\a$ be as in \S 4.1 such that 
\begin{eqnarray}\label{qk+1>eqk}
q_{k+1}\asymp e^{\tau q_k}  
\end{eqnarray} 
with $\tau$ as in (\ref{hhat/UPP}). 
Define $q_{-k}=q_k$ and set   
\begin{eqnarray}\label{Furs/h=}
h(x)=\sum_{k\not=0}\f{e(q_k\a)-1}{|k|} e(q_k x). 
\end{eqnarray}
It follows from (\ref{ratAPP}) and (\ref{qk+1>eqk}) that 
$h(x)$ is a smooth function. We also have 
$h(x)=g(x+\a)-g(x)$ 
where 
\begin{eqnarray}
g(x)=\sum_{k\not=0}\f{1}{|k|}e(q_k x)
\end{eqnarray}
so that $g(x)\in L^2(0,1)$ and in particular   
defines and measurable function. But $g(x)$ cannot 
correspond to a continuous function, as shown in Furstenberg \cite{Fur61}. 
Hence $e(\l g(x))$ is not continuous for suitable $\l$ and according to 
Furstenberg it follows that $T_{\l}$ defined by (\ref{def/SKEW}) with 
$h_{\l}=\l h$ is irregular. 

In the next subsection we deduce directly from Theorem~\ref{thm2} 
that $T_{\l}$ is linearly disjoint from $\mu$.  For this specific skew product 
one can prove the disjointness of $T_{\l}$ with $\mu$ even with a rate 
as shown in \cite{LiuSar}.

\subsection{The M\"obius function is disjoint from the Furstenberg example.}   
It is enough to prove that a smooth conjugation of  
Furstenberg's dynamical system above satisfies the conditions of Theorem~\ref{thm2}. 
To this end we introduce another function 
\begin{eqnarray}
H(x)=\sum_{m\in \Bbb Z} \hat{H}(m) e(m x), 
\end{eqnarray}
where 
\begin{eqnarray}\label{hatH=}
\hat{H}(m)=e^{-2\tau |m|}. 
\end{eqnarray}
Obviously $H(x)=G(x+\a)-G(x)$ 
where 
\begin{eqnarray}\label{def/G=}
G(x)=\sum_{m\in \Bbb Z} \hat{H}(m)\f{e(mx)}{e(m\a)-1}.  
\end{eqnarray} 
We claim that 
$G(x)$ is smooth, and this can be proved by the 
the argument in Lemma~\ref{lem:2ser}. In fact by (\ref{posh/t})  
for any positive $t$, 
\begin{eqnarray*}
\sum_{m\leq t\atop q_k\nmid m} \f{1}{\|m\a\|}
\ll \bigg(\f{t}{q_k}+1\bigg)q_k^2 \log q_k,  
\end{eqnarray*} 
and hence partial integration yields   
\begin{eqnarray*}
\sum_{q_k\leq m< q_{k+1}\atop q_k\nmid m} \f{\hat{H}(m)}{\|m\a\|}
&\ll& \int_{q_k}^{\infty} e^{-2\tau t} 
d\bigg\{\sum_{m\leq t\atop q_k\nmid m} \f{1}{\|m\a\|}\bigg\} \\ 
&\ll& q_k^2\log q_k \int_{q_k}^{\infty} t e^{-2\tau t}dt 
\ll e^{-\tau q_k}.    
\end{eqnarray*} 
On the other hand, 
by (\ref{qq+q|m}), 
\begin{eqnarray*}
\sum_{q_k\leq m< q_{k+1} \atop q_k|m}\f{1}{ \|m\a\|}
\ll q_{k+1}\log q_{k+1},   
\end{eqnarray*}
which together with (\ref{qk+1>eqk}) gives   
\begin{eqnarray*}
\sum_{q_k\leq m< q_{k+1} \atop q_k|m}\f{\hat{H}(m)}{ \|m\a\|}
\ll e^{-2 \tau q_k} q_{k+1}\log q_{k+1}
\ll e^{- \tau q_k} q_{k}. 
\end{eqnarray*} 
These prove that the series in (\ref{def/G=}) is absolutely convergent, and 
hence $G(x)$ is continuous.  In the same way we can prove that  
$G(x)$ is even smooth. 

Now we add $h$ to $H$ so that $h+H$ is smooth,  
and also 
\begin{eqnarray}
h(x)+H(x)=\{g(x+\a)+G(x+\a)\}-\{g(x)+G(x)\}. 
\end{eqnarray}
However $g(x)+G(x)$ cannot be a continuous function, since $G(x)$ is while $g(x)$ 
is not. 

In the following we want  to check that $h(x)+H(x)$ satisfies the upper bound 
and lower bound conditions 
(\ref{hhat/UPP}) and (\ref{hhat/LOW}) of our Theorem~\ref{thm2}.  
The $m$-th Fourier coefficient of $h+H$ is 
\begin{eqnarray*}
\left\{
\begin{array}{lll}
\hat{H}(m), \quad                       &\mbox{if  } m\not=q_k; \\   
\hat{H}(m)+\f{e(q_k\a)-1}{k}, \quad &\mbox{if  } m=q_k.    
\end{array}
\right. 
\end{eqnarray*}
The case $m\not=q_k$ is obvious. To check the case $m=q_k$, 
we apply (\ref{ratAPP}) and (\ref{qk+1>eqk}) to get   
\begin{eqnarray*}
\f{|e(q_k\a)-1|}{k}
\asymp \f{1}{k q_{k+1}}
\asymp \f{1}{ke^{\tau q_k}}, 
\end{eqnarray*}
which in combination with (\ref{hatH=}) yields   
\begin{eqnarray*}
\hat{H}(q_k)+\f{e(q_k\a)-1}{k}
\asymp \f{1}{ke^{\tau q_k}}. 
\end{eqnarray*}
Thus the Fourier coefficients of $h+H$ 
satisfy (\ref{hhat/UPP}) and (\ref{hhat/LOW}), and therefore 
Theorem~\ref{thm2} states that the M\"obius function is disjoint from the 
flow defined by $h+H$.

\section{Theorem~\ref{thm3}}
\setcounter{equation}{0}

For a review of preliminaries of nilmanifolds, the reader is referred to the 
Appendix II \S10. 

\subsection{Structure of affine  linear maps}
We begin with the structure of affine  linear maps.
By \S2.4 in particular Theorem~2.12
in Dani \cite{Dan}, any affine linear map $T$ of $G/\Gamma$ can be written as
\begin{eqnarray}\label{T=Tgs}
T=T_g \circ \overline\s
\end{eqnarray}
where $T_g$ is the action of $g\in G$ on $G/\Gamma$, $\s$ is an automorphism of $G$ such that
$\s(\Gamma)=\Gamma$, and $\overline\s: G/\Gamma\to G/\Gamma$ satisfies
$\overline\s(x\Gamma)=\s(x)\Gamma.$ 
It follows that
$$
T(x\Gamma)=T_g \{\overline\s(x\Gamma)\}= g \s(x)\Gamma,
$$
and by induction
\begin{eqnarray}\label{redDEF}
T^n(x\Gamma)=g\s(g)\cdots \s^{n-1}(g)\s^n(x)\Gamma.
\end{eqnarray}
We remark that (\ref{redDEF}) itself is not enough
to give a proof of Theorem~\ref{thm3}, since
the number of factors on the right-hand side of (\ref{redDEF})
depends on $n$.

\subsection{Application of zero entropy}
To prove Theorem~\ref{thm3}, we need the fact that the flow
$\mathscr{X}=(T, X)$ has zero entropy. The
main reference concerning the dynamics here is Dani's review article \cite{Dan}, Chapter 10.
In this setting the flow has zero
entropy if and only if it is quasi-unipotent. So the aim
is to prove Theorem~\ref{thm3} for such flows.

We need some words to clarify the definition.
Let $T=T_g \circ \overline\s$  be as in (\ref{T=Tgs}).
If all the eigenvalues of the differential $d\s:\fg\to\fg$ are of absolute value $1$, then we say
that $T$ and $\s$ are quasi-unipotent 
according to \S2.4 in Dani \cite{Dan}; this holds if and only if
all the eigenvalues are roots of unity. Further, when $G$ is simply connected,
the factor of $\s$ on $G/[G,G]$ is a linear automorphism and the proceeding
condition holds if and only if all the eigenvalues of the factor are roots of unity.

Let $\mathcal X = \{ X_1,\ldots,X_r\}$ be a basis for the Lie algebra $\fg$, and
for $x\in G$ let $\psi_{\exp}(x)=(u_1,\ldots,u_r)$ be the coordinates of the first kind.
Then $\s(x)$ can be computed by applying (\ref{IMPORT}) in Appendix II as follows:
\bna
\s(x)
&=& \s\{\exp(u_1X_1 + \cdots + u_rX_r)\} \\
&=& \exp\{(d\s)(u_1X_1 + \cdots + u_rX_r)\}.
\ena
Since $d\s$ is quasi-unipotent, we may assume that the matrix $U$
of $d\s$ under $\mathcal X$ is quasi-unipotent, and hence
\bna
(d\s) (u_1 X_1+\cdots+u_r X_r)
=(X_1,\ldots,X_r)U u,
\ena
where $u$ denotes the transpose of the row vector $(u_1,\ldots,u_r)$.
It follows that
\bna
(d\s)^n (u_1 X_1+\cdots+u_r X_r)
=(X_1,\ldots,X_r)U^n u,
\ena
and therefore
\bea\label{alphaINONE}
\s^n(x)
&=&\exp\{(d\s)^n (u_1 X_1+\cdots+u_r X_r)\}\nonumber\\
&=&\exp\{(X_1,\ldots,X_r)U^n u\}.
\eea

Since $U$ is quasi-unipotent, $U$ is a triangular
matrix with its diagonal entries being roots of unity.
It follows that there is a positive
integer $\nu$ such that
\begin{eqnarray}\label{Unu=I+N}
U^\nu=I+N
\end{eqnarray}
where $I$ is the identity matrix and $N$ is nilpotent. From now on we
let $\nu$ denote the least positive integer such that (\ref{Unu=I+N}) holds.
For any $n$, we can write $n=q\nu+l$ with $0\leq l\leq \nu-1$, and
therefore we can compute $U^n$ as
\bna
U^n
=U^{\nu q+l}=U^l(I+N)^q
=U^l \sum_{j=0}^{\min(q,r-1)}{q\choose j}N^j.
\ena
It follows that
\begin{eqnarray}\label{Unu=yt}
U^n u= y
\end{eqnarray}
where $y$ denotes the transpose of the row vector
$(y_{n1}(q),\ldots, y_{nr}(q))$ and each $y_{nk}(q)$ is a polynomial in $q$ with coefficients
depending on $U, x, \nu,$ and $l$. Of course $\deg y_{nk}\leq r-1$ for all $k=1,\ldots,r$.
Inserting (\ref{Unu=yt}) back to (\ref{alphaINONE}), we have
\begin{eqnarray}\label{alphaINO+}
\s^n(x)
=\exp\{y_{n1}(q)X_1+\cdots+y_{nr}(q)X_r\},
\end{eqnarray}
or, in the notation of $\psi_{\exp}$,
\begin{eqnarray}
\psi_{\exp}(\s^n(x))=(y_{n1}(q), \ldots, y_{nr}(q)).
\end{eqnarray}
Similar results holds for $\psi_{\exp}(\s^j(g))$ with $g\in G$ and
$j=1,\ldots,n-1$, that is
\begin{eqnarray}
\psi_{\exp}(\s^j(g))=(y_{j1}(q), \ldots, y_{jr}(q))
\end{eqnarray}
where each $y_{jk}(q)$ is a polynomial in $q$ with degree $\leq r-1$ and
with coefficients depending on $U, g, \nu,$ and $l$.
In the special case $j=0$ the above just reduces to the
coordinates $\psi_{\exp}(g)$ of $g$.

Now we apply Lemma~\ref{lem:3-2} in Appendix II $n$ times, so that the above analysis gives
\begin{eqnarray*}
\psi_{\exp}\{g\s(g)\cdots \s^{n-1}(g)\s^n(x)\}
=(Y_1(q), \ldots, Y_r(q))
\end{eqnarray*}
where $Y_1(q),\ldots,Y_r(q)$ are real polynomials in $q$ with
bounded degrees 
(which are actually $O_r(1)$ with the $O$-constant uniform in other parameters) 
and with their coefficients depending on $U, x, g, \nu,$ and $l$. 

By Lemma~\ref{lem:3-1} in Appendix II we can transform the coordinates of the first
kind to those for the second kind. Apply $\psi\circ\psi_{\exp}^{-1}$ to the above equality,
\begin{eqnarray*}
\psi\{g\s(g)\cdots \s^{n-1}(g)\s^n(x)\}
&=&(\psi\circ\psi_{\exp}^{-1})(Y_1(q), \ldots, Y_r(q)) \\
&=&(Z_1(q), \ldots, Z_r(q)),
\end{eqnarray*}
or
\begin{eqnarray}\label{alphaIN++}
g\s(g) \cdots \s^{n-1}(g)\s^n(x)
=\exp\{Z_1(q)X_1\}\cdots\exp\{Z_r(q)X_r\},
\end{eqnarray}
where $Z_1(q),\ldots,Z_r(q)$ are real polynomials in $q$ with
bounded degrees
and with their coefficients depending on
$U, x, g, \nu,$ and $l$.

For each $j=1,\ldots,r$ we may write
$$
Z_j(q)=c_{j\ell}q^{\ell}+\cdots+c_{j1}q+c_{j0},
$$
where $\ell=\deg Z_j$ and the coefficients $c_{jk}$'s are reals.
Recalling that $n=q\nu+l$ with $0\leq l\leq \nu-1$, we may write $Z_j(q)$ as
a polynomial in $n$ as follows
$$
Z_j(q)=c'_{j\ell}n^{\ell}+\cdots+c'_{j1}n+c'_{j0},
$$
where $c'_{jk}$'s are real coefficients depending on $U, g, x, \nu,$ and $l$.
It follows that
\bna
\exp\{Z_j(q)X_j\}
=\exp(c'_{j\ell}X_j n^{\ell})\cdots\exp(c'_{j1}X_j n)\exp(c'_{j0})
=b_{j\ell}^{n^{\ell}}\cdots b_{j1}^{n}b_{j0}
\ena
with $b_{j\ell}=\exp(c'_{j\ell}X_j)$ etc. Inserting these into (\ref{alphaIN++}),
we see that
\begin{eqnarray*}
g\s(g) \cdots \s^{n-1}(g)\s^n(x)
=b_1^{h_1(n)}\cdots b_k^{h_k(n)}
\end{eqnarray*}
where $b_1,\ldots, b_k\in G$ and
$h_1,\ldots, h_k$ are integral polynomials in $n$. Here it is important to
note that $k$ does not depend on $n$. Thus (\ref{redDEF}) becomes
\bea\label{redDEF+}
T^n(x\Gamma)
&=& g\s(g)\cdots \s^{n-1}(g)\s^n(x)\Gamma \nonumber\\
&=& b_1^{h_1(n)}\cdots b_k^{h_k(n)}\G.
\eea
Compared with (\ref{redDEF}), this has the advantage that
the number $k$ of factors on the right-hand side is
independent of $n$. This fact will be important for the following lemma to hold. 

\begin{lemma}\label{lem:4-1}\footnote{As pointed out to us by Tao this Lemma can be deduced directly from 
Theorem~1.1 of  
\cite{GreTao2} using the constructions with disconnnected nilmanifolds as in \cite{Leb}.} 
Let $\nu$ be a positive integer and $0\leq l<\nu$. 
Let $G/\Gamma$ be a nilmanifold and
$f: G/\Gamma\to [-1,1]$ a Lipschitz function. 
Let $b_1,\ldots, b_k\in G$ and $h_1,\ldots, h_k$ be
integral polynomials in $n$, where $k$ does not depend on $n$.
Then, for any $A>0$,
\bea\label{GgtThm}
\sum_{n\leq N\atop n\equiv l(\bmod \nu)}\mu(n)f\big(b_1^{h_1(n)}\cdots b_k^{h_k(n)}\G\big)
\ll N\log^{-A}N
\eea
where the implied constant depends on $G,\G, T, f, x,\nu$, and $A$.  
\end{lemma}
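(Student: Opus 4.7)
The strategy is to recognize the sum in (\ref{GgtThm}) as a correlation of $\mu$ with a nilsequence evaluated along a polynomial orbit, and then invoke the M\"obius--nilsequence theorem of Green and Tao (Theorem~1.1 of \cite{GreTao2}). The footnote already signals this route.

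\textbf{Step 1: polynomiality of the orbit.} First I would verify that the map $g:\Z\to G$ defined by
$$g(n) = b_1^{h_1(n)}\cdots b_k^{h_k(n)}$$
is a polynomial sequence in $G$ in the sense of Leibman, with respect to the lower central series filtration of $G$, and of degree bounded in terms of the nilpotency class of $G$ and $\max_j\deg h_j$. Each factor $b_j^{h_j(n)}$ is manifestly polynomial along this filtration, and products of polynomial sequences of bounded degree are again polynomial sequences of bounded degree; this is the standard Leibman calculus. It is exactly here that the emphasis in the excerpt on $k$ being \emph{independent of $n$} is used, since otherwise the degree of $g$ could not be controlled uniformly in $n$.

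\textbf{Step 2: removing the congruence condition.} Next I would dispose of the constraint $n\equiv l\pmod\nu$. The cleanest option is Fourier on $\Z/\nu\Z$:
$$\mathbf{1}_{n\equiv l(\bmod\nu)} = \f{1}{\nu}\sum_{a=0}^{\nu-1} e\!\le(\f{a(n-l)}{\nu}\ri).$$
Inserting this decomposes the left-hand side of (\ref{GgtThm}) into $\nu$ sums of the form
$$\sum_{n\leq N}\mu(n)\,e\!\le(\f{an}{\nu}\ri) f\!\le(g(n)\G\ri).$$
The oscillating factor $e(an/\nu)$ is absorbed into a larger nilmanifold: form $G':=G\times {\Bbb R}$ with lattice $\G':=\G\times\Z$, let $F(x,y):=f(x\G)e(y)$ descend to a Lipschitz function on $G'/\G'$, and let $g'(n):=(g(n),an/\nu)$. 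Then $g'$ is a polynomial sequence in the nilpotent Lie group $G'$, and the inner sum becomes $\sum_{n\leq N}\mu(n)F(g'(n)\G')$. Alternatively, following \cite{Leb}, one may replace $G'/\G'$ by a disconnected nilmanifold with $\nu$ components indexed by $\Z/\nu\Z$ and view the original sum directly as a single M\"obius--nilsequence correlation on that disconnected object.

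\textbf{Step 3: invoking Green--Tao.} With the sum in the shape $\sum_{n\leq N}\mu(n)F(g'(n)\G')$, Theorem~1.1 of \cite{GreTao2} gives the bound $\ll N\log^{-A}N$ for arbitrary $A>0$, with implied constant depending on $G',\G',F$, and $A$. Summing the $\nu$ pieces recovers (\ref{GgtThm}), the implied constant picking up additional dependence on $\nu$.

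\textbf{Expected obstacle.} No substantially new analysis is required beyond the cited theorem; the lemma is a packaging statement. The one point that needs genuine care is Step~1: one must check that the product $b_1^{h_1(n)}\cdots b_k^{h_k(n)}$ really fits into the Green--Tao framework as a polynomial sequence of controlled degree along a prescribed filtration, using that $k$ is fixed. The secondary bookkeeping issue is that Theorem~1.1 of \cite{GreTao2} is phrased for connected, simply connected $G$; the disconnected construction of \cite{Leb} (or the enlargement $G\times{\Bbb R}$ above) is precisely what allows the congruence condition to be handled without leaving that setting.
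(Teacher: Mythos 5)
Your proof is correct, and it follows the route indicated in the footnote to the lemma rather than the one asserted, without details, in the main text. The main text says only that the lemma ``can be established in the same way as Theorem~1.1 in Green--Tao'', i.e.\ by re-running the Vaughan-type decomposition of \cite{GreTao2} with the residue class restriction carried through every step. You instead deduce the lemma directly from Theorem~1.1 of \cite{GreTao2}: you identify $n\mapsto b_1^{h_1(n)}\cdots b_k^{h_k(n)}$ as a polynomial sequence in Leibman's sense (exactly where the fixed $k$ is used), and you remove the congruence condition by Fourier analysis on $\Z/\nu\Z$, absorbing the resulting additive character $e(an/\nu)$ into the enlarged nilmanifold $(G\times{\Bbb R})/(\Gamma\times\Z)$. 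This is precisely what the footnote (attributed to Tao) recommends, with the disconnected nilmanifold construction of \cite{Leb} as the alternative packaging you also mention. Both routes are valid; yours is the more economical one, and it is the one the authors themselves endorse. The only bookkeeping points worth recording are that $F(x,y)=f(x\Gamma)e(y)$ is complex-valued, so one either splits into real and imaginary parts or cites the trivially equivalent complex form of the Green--Tao statement, and that $g'(n)=(g(n),an/\nu)$ must be checked to lie in $\mathrm{poly}(\Z,G'_\bullet)$ for the evident product filtration on $G\times{\Bbb R}$ --- both of which you essentially note.
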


Lemma~\ref{lem:4-1} can be established in the same way as 
Theorem~1.1 in Green-Tao \cite{GreTao2}, where the case $\nu=l=1$ is handled. 
Now a proof of Theorem~\ref{thm3} is 
immediate.

\begin{proof}[Proof of Theorem~\ref{thm3}]
Recall that $\nu$ is the least positive integer satisfying (\ref{Unu=I+N}), that is 
$\nu$ is fixed.  
Then each $n\in \Bbb N$ can be written as
$n=\nu q+l$ with $0\leq l\leq \nu-1$, and our original sum takes the form
\begin{eqnarray*}
\sum_{n\leq N}\mu(n)f(T^n (x\G))
&=&\sum_{l=0}^{\nu-1}\sum_{n\leq N\atop n\equiv l(\bmod \nu)}
\mu(n)f(T^n (x\G))\nonumber\\
&=&\sum_{l=0}^{\nu-1}\sum_{n\leq N\atop n\equiv l(\bmod \nu)}\mu(n)f\big(b_1^{h_1(n)}\cdots b_k^{h_k(n)}\G\big)
\end{eqnarray*}
by (\ref{redDEF+}). Applying Lemma~\ref{lem:4-1} to the last sum over $n$, we get
\begin{eqnarray*}
\sum_{n\leq N}\mu(n)f(T^n (x\G))\ll N\log^{-A}N
\end{eqnarray*}
where the implied constant depends on $G, \G, T, f, x, \nu$, and $A$.
Theorem~\ref{thm3} is proved. 
\end{proof}

\section{Appendix I: Disjointness of $\mu$ from double exponential functions} 
\setcounter{equation}{0}
The method of the paper proves, actually more than that, 
the following trigonometric analogue of Hua's Lemma~\ref{lem:VDH}.  

\begin{proposition}\label{prop:TriHua} 
Let $D$ be fixed, and 
\begin{eqnarray}
\phi(n)= \sum_{1\leq |m|\leq D} \l_m e(m\th n)
\end{eqnarray}
be a real trigonometric polynomial. 
Then as $N\to \infty$ we have 
\begin{eqnarray}\label{App1/SoN}
\sum_{n\leq N}\mu(n) e(\phi(n))=o(N)  
\end{eqnarray}
uniformly in $\l=(\l_{-D}, \ldots, \l_{-1}, \l_1, \ldots, \l_D)$ and $\th$. 
\end{proposition}

\section{Appendix II: preliminaries on nilmanifolds}
\setcounter{equation}{0}

\subsection{Nilmanifolds}
Let $G$ be a connected, simply connected
nilpotent Lie group of dimension $r$. A {\it filtration}
$G_{\bullet}$ on $G$ is a sequence of closed connected groups
\begin{eqnarray*}
G=G_0= G_1 \supset \cdots \supset G_d \supset G_{d+1} = \{\mbox{\rm id}_G\}
\end{eqnarray*}
with the property that $[G_j,G_k]\subset G_{j+k}$ for all $j,k\geq 0$. Here $[H,K]$
denotes the commutator group of $H$ and $K$. The {\it degree} $d$ of $G_{\bullet}$ is
the least integer such that $G_{d+1} = \{\mbox{\rm id}_G\}$. We say that $G$ is
{\it nilpotent} if $G$ has a filtration.
If $\G$ is a discrete and cocompact subgroup of $G$,
then $G/\Gamma=\{g\G:g\in G\}$ is called a {\it nilmanifold}.
We write $r=\dim G$ and $r_j=\dim G_j$ for $j=1,\ldots,d$. If
a filtration $G_{\bullet}$ of degree $d$ exists then the lower central
series filtration defined by
$$
G=G_0=G_1, \quad G_{j+1}=[G,G_j]
$$
terminates with $G_{s+1}=\{\mbox{\rm id}_G\}$ for some $s\leq d$. The least such
$s$ is called the {\it step} of the nilpotent Lie group $G$.

\subsection{Connections with Lie Algebra}
Let $\fg$ be the Lie
algebra of $G$, and let $\exp: \fg\to G$ and $\log: G\to\fg$
be the exponential and logarithm maps, which are both diffeomorphisms.
We can also define the $1$-parameter subgroup
$(g^t)_{t\in\Bbb R}$ associated to an element $g\in G$, and thus
$$
\exp(X)^t = \exp(tX)
$$
for all $X\in {\mathfrak g}$ and $t\in \Bbb R.$
For an automorphism $\s$ of $G$
we denote by
$d\s:\fg\to\fg$ the differential of $\s$.
Then we have
\bea\label{IMPORT}
\s(\exp(X))=\exp\{(d\s)(X)\} \quad\mbox{for any } X\in \fg.
\eea
These maps are illustrated below:
$$
\begin{array}{rcl}
G        & {\s\atop\longrightarrow} & G \\
\exp \uparrow &             & \downarrow \log\\
\fg      & {\longrightarrow\atop d\s} & \fg \\
\end{array}
$$

\subsection{Coordinates of the first and second kind.}
Now we give the notion of coordinates of the first and second
kinds. Let $\mathcal X = \{ X_1,\ldots,X_r\}$ be a basis for the Lie algebra
$\fg$. If
$$
g = \exp(u_1X_1 + \cdots + u_rX_r),
$$
then we say that $(u_1,\ldots,u_r)$ are the {\it coordinates of the first kind} or
exponential coordinates for $g$ relative to the basis $\mathcal X$.
We write $(u_1,\ldots,u_r) = \psi_{\exp}(g)$.
If
$$
g = \exp(v_1X_1)\cdots \exp(v_rX_r),
$$
then we say that $(v_1,\ldots,v_r)$ are the {\it coordinates of the second kind}
for $g$ relative to $\mathcal X$, and we write $(v_1,\ldots,v_r) = \psi(g)$.
The {\it height} of a reduced rational number $\f{a}{b}$ is defined to be
$\max\{|a|,|b|\}$. The basis $\mathcal X$ is said to be $Q$-{\it rational} if all the structure constants $c_{ijk}$
in the relations
$$
[X_i,X_j]=\sum_{k}c_{ijk}X_k
$$
are rationals of height at most $Q.$ 

The following lemmas describes the connection between the two types of
coordinate systems; they are Lemmas~A.2 and A.3 in Green-Tao \cite{GreTao1}.

\begin{lemma}\label{lem:3-1}
Let $\mathcal X$ be a basis for $\fg$ such that
\bea\label{nestPROP}
[\fg,X_j]\subset \mbox{\rm Span}(X_{j+1},\ldots,X_r)
\eea
for $j=1,\ldots,r-1.$ Then the compositions  $\psi_{\exp}\circ \psi^{-1}$ and
$\psi\circ \psi_{\exp}^{-1}$ are both polynomial maps on ${\Bbb R}^r$
with bounded degree. If $\mathcal X$ is $Q$-rational then all the coefficients of these
polynomials are rational of height at most $Q^C$ for some constant $C>0$.
\end{lemma}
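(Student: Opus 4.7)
The plan is to reduce everything to the Baker--Campbell--Hausdorff (BCH) formula, exploiting the nesting hypothesis on $\mathcal X$ to ensure that BCH truncates after finitely many terms and produces a triangular polynomial structure. First I note that $[\fg, X_j] \subset \mbox{\rm Span}(X_{j+1}, \ldots, X_r)$ together with antisymmetry forces $[X_i, X_j] \in \mbox{\rm Span}(X_{\max(i,j)+1}, \ldots, X_r)$, and iterating this shows that every nested commutator of length exceeding $r$ vanishes. Consequently the BCH series
$$
X \ast Y = X + Y + \tfrac{1}{2}[X,Y] + \tfrac{1}{12}[X,[X,Y]] - \tfrac{1}{12}[Y,[X,Y]] + \cdots
$$
reduces to a finite polynomial in the coordinates of $X$ and $Y$, whose coefficients are rational combinations of the structure constants $c_{ijk}$ with denominators coming from the universal BCH rationals, hence bounded in terms of $r$ alone.

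Next I would apply BCH iteratively to the product $\exp(v_1 X_1)\cdots\exp(v_r X_r)$, combining at each step the partial product into a single exponential. This produces an identity
$$
\exp(v_1 X_1)\cdots\exp(v_r X_r) = \exp\bigl(F_1(v)X_1 + \cdots + F_r(v)X_r\bigr),
$$
where each $F_j(v)$ is a polynomial in $v_1,\ldots,v_r$ of degree bounded in terms of $r$. The key refinement, proved by induction on $j$, is the triangular structure $F_j(v) = v_j + G_j(v_1,\ldots,v_{j-1})$: the $X_j$-component receives contributions only from the diagonal term $v_j X_j$ and from iterated commutators $[\,\cdot\,,X_{k_1},\ldots,X_{k_s}]$ whose outermost index $k_s < j$, because the nesting condition forbids any commutator ending in $X_k$ with $k \ge j$ from producing an $X_j$ term. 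This identifies $\psi_{\exp}\circ\psi^{-1}$ as a polynomial map of bounded degree.

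To obtain $\psi\circ\psi_{\exp}^{-1}$ I would invert the triangular system $u_j = v_j + G_j(v_1,\ldots,v_{j-1})$ by back substitution: setting $v_1 = u_1$ and $v_j = u_j - G_j(v_1(u),\ldots,v_{j-1}(u))$ recursively expresses each $v_j$ as a polynomial in $u_1,\ldots,u_j$ of degree bounded in terms of $r$. Hence both compositions are polynomial maps of bounded degree, as claimed.

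The main obstacle is the quantitative bound on heights. Granting that the structure constants have height at most $Q$, each surviving term of the truncated BCH series is a product of at most $O(r)$ structure constants times a universal rational whose denominator is a divisor of a fixed integer depending only on $r$; so the coefficients of $F_j$ have height at most $Q^{O(r)}$. The recursive back substitution that produces $v_j(u)$ composes and multiplies a bounded number of such polynomials, and each composition multiplies heights by at most a fixed power. Bookkeeping all of this through the $r$ inversion steps yields the final bound $Q^{C}$ for some $C = C(r) > 0$; this step is routine but the tedious part of the proof.
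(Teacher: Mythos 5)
The paper does not actually prove this lemma; it cites it verbatim as Lemmas~A.2 and~A.3 of Green--Tao \cite{GreTao1}. Your Baker--Campbell--Hausdorff argument — finite truncation via nilpotency, triangular structure of the coordinate change, back-substitution, and height bookkeeping through boundedly many compositions — is exactly the route Green--Tao take, and the proof is essentially correct. One small imprecision in the triangularity step: you say the commutators contributing to the $X_j$-component are those "whose outermost index $k_s<j$," but what is actually needed (and what you then use, since you conclude $G_j$ depends only on $v_1,\ldots,v_{j-1}$) is that \emph{every} index appearing in the bracket is $<j$. This follows immediately from the nesting hypothesis and antisymmetry: any nontrivial iterated bracket involving some $X_k$ with $k\ge j$ lies in $\fg_{k+1}\subset\mathrm{Span}(X_{j+1},\ldots,X_r)$ and hence has no $X_j$-component. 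With that correction the argument is complete.
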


\begin{lemma}\label{lem:3-2}
Let $\mathcal X$ be a basis for $\fg$ satifying (\ref{nestPROP}). Let $x, y\in G$, and suppose that
$\psi(x) =(u_1,\ldots,u_r)$ and $\psi(y) =(v_1,\ldots,v_r)$. Then
$$
\psi_{\exp}(x) = (u_1, u_2 + R_1(u_1),\ldots, u_r + R_{r-1}(u_1,\ldots,u_{r-1})),
$$
where each $R_j: {\Bbb R}^j\to {\Bbb R}$ is a polynomial of bounded degree. Also,
$$
\psi_{\exp}(xy)
=(u_1 + v_1, u_2 + v_2 + S_1(u_1, v_1),\ldots, u_r + v_r + S_{r-1}(u_1,\ldots, u_{r-1}, v_1,\ldots, v_{r-1})),
$$
where each $S_j: {\Bbb R}^j\times {\Bbb R}^j\to {\Bbb R}$
is a polynomial of bounded degree.

Let $Q\geq 2$. If $\mathcal X$ is $Q$-rational then all
the coefficients of the polynomials $R_j, S_j$ are rationals of height $Q^C$ for some constant $C>0$.
\end{lemma}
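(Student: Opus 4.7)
The plan is to deduce both formulas from the Baker--Campbell--Hausdorff (BCH) series
$$\log(\exp(X)\exp(Y)) = X + Y + \tfrac{1}{2}[X,Y] + \tfrac{1}{12}([X,[X,Y]] + [Y,[Y,X]]) + \cdots,$$
exploiting two features of the setup: $\fg$ is nilpotent (so BCH terminates at depth at most the step $s$ of $G$), and the basis $\mathcal X$ satisfies the nestedness property (\ref{nestPROP}). The workhorse is the following filtration lemma: (\ref{nestPROP}) forces $[X_i,X_j]\in\mathrm{Span}(X_{\max(i,j)+1},\ldots,X_r)$, and an induction on bracket depth then shows that any iterated Lie bracket built from $X_{i_1},\ldots,X_{i_n}$ lies in $\mathrm{Span}(X_{\max(i_1,\ldots,i_n)+1},\ldots,X_r)$.

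For the formula for $\psi_{\exp}(x)$, I would prove by induction on $k=1,\ldots,r$ the stronger statement that
$$\exp(u_1 X_1)\cdots\exp(u_k X_k)=\exp\Bigl(\sum_{j=1}^{r} w_j^{(k)} X_j\Bigr),$$
where $w_j^{(k)}=u_j+R_{j-1}(u_1,\ldots,u_{j-1})$ for $j\leq k$ with $R_{j-1}$ a polynomial, and $w_j^{(k)}$ is a polynomial in $u_1,\ldots,u_k$ for $j>k$. The inductive step multiplies by $\exp(u_{k+1}X_{k+1})$ via BCH; by the filtration lemma every iterated bracket that arises contains the factor $u_{k+1}X_{k+1}$ and hence lies in $\mathrm{Span}(X_{k+2},\ldots,X_r)$. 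Consequently the first $k$ coefficients are unchanged, coordinate $k+1$ becomes $w_{k+1}^{(k)}+u_{k+1}$ (yielding $R_k:=w_{k+1}^{(k)}$, a polynomial in $u_1,\ldots,u_k$), and coordinates $j>k+1$ acquire polynomial corrections in $u_1,\ldots,u_{k+1}$. Setting $k=r$ delivers the first part of the lemma.

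For the product formula, let $\tilde u_j$ and $\tilde v_j$ be the exponential coordinates of $x$ and $y$ produced by the first part, and expand $\log(xy)=\sum_j(\tilde u_j+\tilde v_j)X_j+\Omega$ via BCH. By the filtration lemma, the $X_k$-component of $\Omega$ involves only $\tilde u_j,\tilde v_j$ with $j<k$, since any iterated bracket whose underlying indices include some $j\geq k$ lies in $\mathrm{Span}(X_{k+1},\ldots,X_r)$. Substituting the first-part expressions $\tilde u_j=u_j+R_{j-1}(u_1,\ldots,u_{j-1})$ and similarly for $\tilde v_j$ rewrites the $k$-th coordinate of $\psi_{\exp}(xy)$ as $u_k+v_k+S_{k-1}(u_1,\ldots,u_{k-1},v_1,\ldots,v_{k-1})$ for a polynomial $S_{k-1}$.

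The bounded-degree and $Q$-rationality claims are then bookkeeping. Nilpotency caps the BCH depth, and the inductive multiplications over at most $r$ factors keep the degrees of $R_j$ and $S_j$ bounded by a constant depending only on $r$. The BCH coefficients are explicitly rational with denominators from a fixed bounded set, and each $[X_i,X_j]$ is a $\Bbb Z$-combination of the structure constants $c_{ijk}$; so $Q$-rationality of $\mathcal X$ forces the coefficients of $R_j,S_j$ to have height $\leq Q^C$ for some $C=C(r)$. The main obstacle I foresee is precisely the filtration lemma: turning the informal claim ``iterated brackets at depth $n$ sit high in the filtration'' into a clean index-tracking induction robust enough to drive both halves of the argument. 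Once that is in place, the rest is a mechanical BCH calculation.
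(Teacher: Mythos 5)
The paper does not prove this lemma --- it is quoted verbatim from Green and Tao \cite{GreTao1} (their Lemmas A.2 and A.3), as the paper itself notes just above the statement --- so there is no in-paper argument to compare against. Taken on its own terms, your Baker--Campbell--Hausdorff proposal is correct and is indeed the standard route. The ``filtration lemma'' that you flag as the main obstacle is handled correctly: combining (\ref{nestPROP}) with antisymmetry gives $[X_i,X_j]\in\mathrm{Span}(X_{\max(i,j)+1},\ldots,X_r)$, and the induction on bracket depth goes through because bracketing an element of $\mathrm{Span}(X_{M+1},\ldots,X_r)$ with anything lands in $\mathrm{Span}(X_{M+2},\ldots,X_r)$ again by (\ref{nestPROP}). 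This, together with nilpotency truncating BCH at depth at most the step of $G$, is exactly what forces the $k$-th exponential coordinate to depend on no index $\geq k$ beyond the leading $u_k$ (resp.\ $u_k+v_k$). For the first formula your strengthened induction on $k$ is the right device, since it lets you read off $R_k$ as the already-computed coefficient $w_{k+1}^{(k)}$; for the product formula the same filtration bound on iterated brackets shows the correction to coordinate $k$ depends only on $\tilde u_j,\tilde v_j$ with $j<k$, and substituting the first part finishes it. The bounded-degree and $Q$-rationality claims are, as you say, bookkeeping: BCH has only $O_r(1)$ relevant terms, each a rational combination (with denominators bounded in terms of the step) of iterated structure constants, so the heights of the coefficients of $R_j,S_j$ are bounded by $Q^{O_r(1)}$.
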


\medskip 
\noindent 
{\bf Acknowledgements.} We thank Terry Tao for his insightful comments 
on an earlier version of this paper. While working on this project the first author made  
multiple visits to the Institute for Advanced Study and Princeton University 
in 2010-2013, and it is a pleasure to record his gratitude to both institutions. 
The first author is supported by the 973 Program, 
NSFC 11031004, and IRT 1264. The second author is supported by an NSF grant.

\vskip 1cm

\end{document}